\newcommand\Y{\mathbb Y}
\newcommand\Z{\mathbb Z}
\newcommand\C{\mathbb C}
\newcommand\R{\mathbb R}
\renewcommand\L{\mathbb L}
\newcommand\AAA{\mathbb A}
\newcommand\al{\alpha}
\newcommand\be{\beta}
\newcommand\Ga{\Gamma}
\newcommand\de{\delta}
\newcommand\ka{\varkappa}
\newcommand\La{\Lambda}
\newcommand\si{\sigma}
\newcommand\epsi{\varepsilon}
\newcommand\wt{\widetilde}
\newcommand\wh{\widehat}
\newcommand\half{\frac12}
\newcommand\Sign{\operatorname{Sign}}
\newcommand\discr{{\operatorname{discr}}}
\newcommand\Res{\operatorname{Res}}
\newcommand\FF{\mathcal F}
\newcommand\Aa{\mathcal A}
\newcommand\BB{\mathcal B}
\newcommand\CC{\mathcal C}
\newcommand\DD{\mathcal D}
\newcommand\CBD{\text{$\CC$-$\BB$-$\DD$}}
\renewcommand\L{L}
\newcommand\ccdot{\,\cdot\,}
\newtheorem{theorem}{Theorem}[section]
\newtheorem{proposition}[theorem] {Proposition}
\newtheorem{corollary}[theorem]{Corollary}
\newtheorem{lemma}[theorem]{Lemma}
\newtheorem{theoremA}{Theorem}
\theoremstyle{definition}
\newtheorem{definition}[theorem]{Definition}
\newtheorem{remark}[theorem]{Remark}
\newtheorem{example}[theorem]{Example}
\newtheorem*{problem}{Problem}
\numberwithin{equation}{section}
\begin{document}

\title[Characters and splines]{Characters of classical groups, Schur-type functions,  and discrete splines}

\author{Grigori Olshanski}

\begin{abstract}
We study a spectral problem related to the finite-dimensional characters of the groups $Sp(2N)$, $SO(2N+1)$, and $SO(2N)$, which  form the classical series $\CC$, $\BB$, and $\DD$, respectively. The irreducible characters of these three series are given by $N$-variate symmetric polynomials $\chi_{\nu,N}(x_1,\dots,x_N)$, where the index $\nu$ ranges over the set $\Sign^+_N$ of $N$-tuples of integers $\nu_1\ge\dots\ge\nu_N\ge0$ (the case of the series $\DD$ requires a minor correction). The spectral problem in question consists in the decomposition of the characters after their restriction to the subgroups of the same type but smaller rank $K<N$. More precisely, we are dealing with normalized characters and the expansion
$$
\frac{\chi_{\nu,N}(x_1,\dots,x_K,1,\dots,1)}{\chi_{\nu,N}(1,\dots,1)}
=\sum_{\ka\in\Sign^+_K}\La^N_K(\nu,\ka)\frac{\chi_{\ka,K}(x_1,\dots,x_K)}{\chi_{\ka,K}(1,\dots,1)}.
$$
The main result of the paper is the derivation of explicit determinantal formulas for the coefficients $\La^N_K(\nu,\ka)$.

In fact, we first compute these coefficients  in a greater generality --- for the multivariate symmetric Jacobi polynomials depending on two continuous parameters. Next, we show that the formulas can be drastically simplified for the three special cases of Jacobi polynomials corresponding to the $\CBD$ characters. In particular, we show that the coefficients $\La^N_K(\nu,\ka)$, thought of as functions of $\ka\in\Sign^+_K$, are given by piecewise polynomial functions. This is where a link with discrete splines arises. 
 
In type $\Aa$ (that is, for the characters of the unitary groups $U(N)$), similar results were earlier obtained by Alexei Borodin and the author [Adv. Math., 2012], and then reproved by another method by Leonid Petrov [Moscow Math. J., 2014].  The case of the symplectic and orthogonal characters is more intricate. 
\end{abstract}

\date{}

\thanks{Supported by the Russian Science Foundation under project 23-11-00150}

\maketitle

\tableofcontents

\section{Introduction}\label{sect1}

This introductory section is structured as follows. We begin with a brief description of the problem and its history (section \ref{sect1.1}) and a discussion of spline functions (sections \ref{sect1.2} --\ref{sect1.4}). Then we introduce a few necessary definitions (sections \ref{sect1.5}--\ref{sect1.6}). After that, in sections \ref{sect1.7}--\ref{sect1.9}, the main results are stated. Various comments and bibliographical notes are collected in section \ref{sect1.10}. 

\subsection{Stochastic matrices related to irreducible characters}\label{sect1.1}

Let $G$ be a finite or compact group and $\{\chi_{\nu,G}\}$ be the set of its irreducible characters, where the indices $\nu$ are appropriate labels. Let us regard the set $\{\chi_{\nu,G}\}$ (or simply the corresponding set $\{\nu\}$ of labels) as a kind of dual object $\wh{G}$ to the group $G$. Then one would like to assign to any morphism $\phi\colon H\to G$, a dual ``morphism'' $\wh\phi$ from $\wh{G}$ to $\wh{H}$; how to do this? A reasonable solution is as follows. It is more convenient to work with \emph{normalized} irreducible characters
$$
\wt\chi_{\nu,G}:=\frac{\chi_{\nu,G}}{\dim\nu}, \quad \dim\nu:=\chi_{\nu,G}(e).
$$
The pullback of $\wt\chi_{\nu,G}$ under $\phi$ is a normalized, positive definite class function on $H$, and hence it can be written as a convex combination of normalized irreducible characters $\wt\chi_{\ka,H}$ of the group $H$:
$$
\chi_{\nu,G}\circ\phi=\sum_\ka\La^G_H(\nu,\ka) \wt\chi_{\ka,H},
$$
where $\La^G_H(\nu,\ka)$ are some coefficients. These coefficients obviously form a stochastic matrix $\La^G_H$ of the format $\{\nu\}\times\{\ka\}$, and we regard $\La^G_H$ as the  desired dual ``morphism'' $\wh\phi$ from $\wh G$ to $\wh H$.  This is justified by the fact that  stochastic matrices (more generally, Markov kernels) can be viewed as a natural generalization of ordinary maps.\footnote{The idea to consider Markov kernels as morphisms also arises in other situations, see e.g. \cite{Lawvere}.} 

\begin{example}
Take $G=S(n)$, the symmetric group on the set $\{1,\dots,n\}$; then the corresponding set $\{\nu\}$ is $\Y_n$, the set of Young diagrams with $n$ boxes. Next, for $k<n$, take $H=S(k)$, the subgroup of $S(n)$ fixing the points $k+1,\dots, n$; the corresponding set $\{\ka\}$ is $\Y_k$. Then
\begin{equation}\label{eq1.L}
\La^{S(n)}_{S(k)}(\nu,\ka)=\begin{cases}\dfrac{\dim\ka\, \dim\nu/\ka}{\dim\nu}, &\text{if $\ka\subset\nu$,}\\
 0, &\text{otherwise,}
\end{cases} 
\end{equation}
where $\dim (\,\cdot\,)$ is the number of standard tableaux of a given (skew) shape.
\end{example} 

For this quantity one can obtain a determinantal expression which can be transformed to the following form: 
\begin{equation}\label{eq1.M}
\La^{S(n)}_{S(k)}(\nu,\ka)=\frac{\dim\ka}{n^{\downarrow k}}\,  s^*_\ka(\nu_1,\nu_2,\dots),
\end{equation}
where 
$$
n^{\downarrow k}:=n(n-1)\dots(n-k+1)
$$
and $s^*_\ka$ is the \emph{shifted Schur function} indexed by $\ka$; these functions form a basis of the \emph{algebra of shifted symmetric functions},  see \cite[Theorem 8.1]{OO-AA}, \cite[Prop. 6.5]{BO-book}.

Formula \eqref{eq1.M} makes it possible to find the asymptotics of $\La^{S(n)}_{S(k)}(\nu,\ka)$ for fixed $\ka$ and growing $\nu$. As an application, one obtains a relatively simple proof of Thoma's theorem about characters of the infinite symmetric group: \cite{KOO}, \cite{BO-book}.  

Another notable fact is that the quantity 
$$
\frac{n^{\downarrow k}}{\dim\ka}\,\La^{S(n)}_{S(k)}(\nu,\ka),
$$
viewed as a function of the partition $\nu$, has a similarity with the Schur function (which is not evident from the initial definition \eqref{eq1.L}). 

In \cite{BO-AdvMath}, we raised the problem of studying the stochastic matrices $\La^{U(N)}_{U(K)}$ related to the unitary group characters; here the matrix entries $\La^{U(N)}_{U(K)}(\nu,\ka)$ are indexed by tuples of integers
$$
\nu=(\nu_1\ge\dots\ge\nu_N), \quad \ka=(\ka_1\ge\dots\ge\ka_K), \quad K<N.
$$
In that work we were guided by a remarkable analogy between the infinite symmetric group $S(\infty)$ and the infinite-dimensional unitary group $U(\infty)$.\footnote{For more detail about this analogy, see \cite{BO-MMJ}. About applications of the stochastic matrices $\La^{U(N)}_{U(K)}$, see the expository paper \cite{Ols-ICM}. }

We obtained again a determinantal ``Schur-type'' expression for the matrix entries $\La^{U(N)}_{U(K)}(\nu,\ka)$ and could  apply it to a novel derivation of the classification theorem for the characters of the infinite-dimensional unitary group $U(\infty)$.  

The aim of the present paper is to extend the results of \cite{BO-AdvMath} to other series of compact classical groups, that is, the symplectic groups $Sp(2N)$ (series $\CC$) and the orthogonal groups $SO(2N+1)$ and $SO(2N)$ (series $\BB$ and $\DD$).  As it often happens in representation theory, working with the $\CBD$ series turns out to be harder than with series $\Aa$. The present paper is focused on the combinatorial aspects of the problem, and the asymptotic analysis is postponed to a separate publication.

\subsection{B-splines}\label{sect1.2}

Given an $N$-tuple of real numbers $y_1>\dots>y_N$, we define a function of a variable $x\in\R$ by
\begin{equation}\label{eq1.A}
M_N(x;y_1,\dots,y_N):=(N-1)\sum_{i:\, y_i\ge x}\frac{(y_i-x)^{N-2}}{\prod\limits_{r:\, r\ne i}(y_i-y_r)}.
\end{equation}
Note that the number of terms on the right-hand side depends on the position of the variable $x$ relative to the parameters $y_1,\dots,y_N$.  The function $x\mapsto M_N(x;y_1,\dots,y_N)$ has the following properties:

\begin{itemize}

\item[(i)] it vanishes outside the interval $(y_N,y_1)$;

\item[(ii)]  it is piecewise polynomial: on each interval $(y_{i+1},y_i)$ inside $(y_N,y_1)$, it is given by a polynomial of degree $N-2$;

\item[(iii)] it has $N-3$ continuous derivatives at each point $y_i$;

\item[(iv)] it is positive on $(y_N,y_1)$ and 
$$
\int_{y_N}^{y_1}M_N(x;y_1,\dots,y_N)dx=1,
$$
so that $M_N(x;y_1,\dots,y_N)dx$ is a compactly supported probability measure on $\R$.

\end{itemize}

The function $x\mapsto M_N(x;y_1,\dots,y_N)$ is called the \emph{B-spline} with the knots $y_1,\dots,y_N$ (``B''  is an abbreviation of  ``basic'').   The paper \cite{CS} by Curry and Schoenberg contains remarkable results about the B-spline. For more details about spline functions we refer to Schumaker's monograph \cite{Schumaker}. 

The proposition below relates B-splines to random matrices. Let $H(N)$ be the space of $N\times N$ Hermitian matrices. The unitary group $U(N)$ acts on $H(N)$ by conjugations. Let $\mathcal O(y_1,\dots,y_N)\subset H(N)$ denote the set of matrices with eigenvalues $y_1,\dots,y_N$. It is an $U(N)$-orbit and carries a (unique) $U(N)$-invariant probability measure, which we denote by $P(y_1,\dots,y_N)$. 

\begin{proposition}[Okounkov]
Consider the projection $\mathcal O(y_1,\dots,y_N)\to\R$ assigning to a matrix $X\in \mathcal O(y_1,\dots,y_N)$ its upper left most entry $X_{11}$. The pushforward of the measure $P(y_1,\dots,y_N)$ under this projection 
is the measure $M_N(x;y_1,\dots,y_N)dx$.  
\end{proposition}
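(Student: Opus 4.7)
The plan is to reduce the statement to a classical identification of the B-spline with the density of a uniformly weighted linear combination of the knots. Parameterize the orbit by $X=U\Lambda U^*$ with $\Lambda=\operatorname{diag}(y_1,\dots,y_N)$ and $U\in U(N)$; the invariant probability measure $P(y_1,\dots,y_N)$ is the pushforward of normalized Haar measure on $U(N)$ under this map. A direct computation gives
$$
X_{11}=(U\Lambda U^*)_{11}=\sum_{k=1}^N y_k\,|U_{1k}|^2,
$$
so the distribution of $X_{11}$ depends only on the first row of $U$.

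The first row of a Haar-distributed $U\in U(N)$ is uniform on the unit sphere $S^{2N-1}\subset\C^N$, so the vector $p:=(|U_{11}|^2,\dots,|U_{1N}|^2)$ is uniformly distributed on the probability simplex $\Delta^{N-1}=\{p\in\R^N_{\ge 0}: p_1+\cdots+p_N=1\}$ (the symmetric Dirichlet distribution with all parameters equal to $1$). The problem thus reduces to showing that the density of the linear functional $L_y(p):=\sum_k y_k p_k$, under the uniform law on $\Delta^{N-1}$, coincides with $M_N(x;y_1,\dots,y_N)$.

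I would verify the reduced statement by matching Laplace transforms. The Hermite--Genocchi formula asserts that for any smooth $g$,
$$
\int_{\Delta^{N-1}}g\bigl(\textstyle\sum_k y_k p_k\bigr)\,d\mu(p)=(N-1)!\sum_{i=1}^N\frac{G(y_i)}{\prod_{j\ne i}(y_i-y_j)},
$$
where $d\mu$ is the uniform probability measure on $\Delta^{N-1}$ and $G$ is any $(N-1)$-fold antiderivative of $g$. Specializing to $g(y)=e^{-sy}$ yields an explicit closed form for $\E\bigl[e^{-sX_{11}}\bigr]$. On the other hand, rewriting the defining expression \eqref{eq1.A} as
$$
M_N(x;y_1,\dots,y_N)=(N-1)\,[y_1,\dots,y_N]\,(y-x)_+^{N-2}
$$
(divided difference acting on the variable $y$) and applying the same divided-difference identity to $(y-x)_+^{N-2}$ shows that the Laplace transform of $M_N(x;y_1,\dots,y_N)\,dx$ agrees with that of $L_y(p)$. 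Since both measures are compactly supported in $[y_N,y_1]$, equality of Laplace transforms forces the measures to coincide.

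The only real technical point is the Hermite--Genocchi identity itself; this is standard and follows by induction on $N$ via the divided-difference recursion after integrating out $p_N$. Geometrically, this step amounts to Curry--Schoenberg's identification of the B-spline with the pushforward of uniform Lebesgue measure on a simplex under a linear projection, which makes the reduction essentially immediate once the parameterization of the orbit has been reduced to the first row of $U$.
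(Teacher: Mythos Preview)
Your argument is correct and is exactly the route the paper indicates: the paper gives no self-contained proof but merely cites Okounkov's remark that the result is an easy corollary of Curry--Schoenberg \cite[Theorem~2]{CS}, and your reduction (writing $X_{11}=\sum_k y_k|U_{1k}|^2$ and using that the squared moduli of the first row of a Haar unitary are uniform on the simplex) is precisely that corollary spelled out. The Hermite--Genocchi/Laplace-transform verification you include is one standard proof of the Curry--Schoenberg theorem itself, so your write-up is in fact more detailed than the paper's one-line citation.
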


As noted by Okounkov (\cite[Remark 8.2]{OV}), this is an easy corollary of \cite[Theorem 2]{CS}.

\subsection{Discrete B-splines}\label{sect1.3}

Throughout the paper we use the standard notation for the Pochhammer symbol (aka raising factorial power):
$$
(x)_m:=x(x+1)\dots(x+m-1)=\frac{\Ga(x+m)}{\Ga(x)}, \quad m=0,1,2,\dots\,.
$$

By the \emph{discrete B-spline} with integral knots $y_1>\dots>y_N$ we mean the function on $\Z$ defined by
\begin{equation}\label{eq1.B}
M^\discr_N(x;y_1,\dots,y_N):=(N-1)\sum_{i:\, y_i\ge x}\frac{(y_i-x+1)_{N-2}}{\prod\limits_{r:\, r\ne i}(y_i-y_r)}.
\end{equation}
This agrees with the definition in Schumaker \cite[section 8.5]{Schumaker}, up to minor changes. 
Note that the right-hand side of \eqref{eq1.B} is not affected if instead of $y_i\ge x$ we impose the weaker condition $y_i+N-2\ge x$: the reason is that the function $x\mapsto (y-x+1)_{N-2}$ vanishes at the points $y+1,\dots,y+N-2$. The formula \eqref{eq1.B} is very similar to \eqref{eq1.A}, only  the variable $x$ now ranges over $\Z$ and not $\R$, and the ordinary powers are replaced by the raising factorial powers. The discrete B-spline has properties similar to properties (i)--(iv) above. In particular, it determines a finitely supported probability measure on $\Z$ (the support is the lattice interval $y_N+N-2\le x\le y_1$). 

\subsection{Link with characters of $U(N)$}\label{sect1.4}

Let $\Sign_N\subset \Z^N$ denote the set of $N$-tuples of integers $\nu=(\nu_1,\dots,\nu_N)$ subject to the inequalities $\nu_1\ge\dots\ge\nu_N$. The elements $\nu\in\Sign_N$ are called \emph{signatures of length $N$}.  They parametrize the irreducible characters of the unitary group $U(N)$; these characters  can be thought of as symmetric $N$-variate Laurent polynomials (aka rational Schur functions) and are denoted as $\chi_{\nu,N}(u_1,\dots,u_N)$. We also introduce the \emph{normalized} characters 
\begin{equation}\label{eq1.C}
\wt\chi_{\nu,N}(u_1,\dots,u_N):=\frac{\chi_{\nu,N}(u_1,\dots,u_N)}{\chi_{\nu,N}(1,\dots,1)}, \qquad \nu\in\Sign_N.
\end{equation}

Observe that $u\mapsto \wt\chi_{\nu,N}(u,1,\dots,1)$ is a univariate Laurent polynomial and consider its expansion on monomials $u^k$, which we write in the form 
\begin{equation}\label{eq1.D}
\wt\chi_{\nu,N}(u,1,\dots,1)=\sum_{k\in\Z}\La^N_1(\nu,k) u^k.
\end{equation}

The coefficients $\La^N_1(\nu,k)$ are nonnegative real numbers which sum to $1$, for any fixed $\nu\in\Sign_N$. Thus, we may view $\La^N_1(\nu,\ccdot)$ as a finitely supported probability distribution on $\Z$.

In combinatorial terms, the quantity $\chi_{\nu,N}(1,\dots,1)$ (the dimension of the character $\chi_{\nu,N}$) is equal to the number of triangular Gelfand-Tsetlin patterns with the top row $\nu$, whereas $\La^N_1(\nu,k)$ is the fraction of the patterns with the bottom entry  $k$.

\begin{proposition}[\cite{BO-AdvMath}, formula (7.10)]
For any signature $\nu\in\Sign_N$, the  distribution $\La^N_1(\nu,\ccdot)$ is the discrete B-spline with the knots $y_i=\nu_i-i+1${\rm:}
\begin{equation}\label{eq1.E}
\La^N_1(\nu,k)=M_N^\discr(k; \nu_1,\nu_2-1,\dots,\nu_N-N+1), \qquad k\in\Z.
\end{equation}
\end{proposition}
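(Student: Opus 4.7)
The plan is to compute $\wt\chi_{\nu,N}(u,1,\dots,1)$ in closed form from Weyl's character formula for $U(N)$, extract its Laurent coefficients by residue calculus, and match the result against the discrete B-spline \eqref{eq1.B}. Set $\ell_i:=\nu_i+N-i$, so that $\ell_1>\dots>\ell_N$ and $y_i=\ell_i-N+1$. The key intermediate identity I will aim for is
\begin{equation*}
\wt\chi_{\nu,N}(u,1,\dots,1)=\frac{(N-1)!}{(u-1)^{N-1}}\sum_{i=1}^{N}\frac{u^{\ell_i}}{\prod_{r\ne i}(\ell_i-\ell_r)}. \tag{$*$}
\end{equation*}

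I derive $(*)$ from $\chi_{\nu,N}(u_1,\dots,u_N)=\det(u_i^{\ell_j})/\det(u_i^{N-j})$ by a coalescence limit: set $u_1=u$ and let $u_2,\dots,u_N\to 1$. Expanding each row for $i\ge 2$ via $u_i^{\ell_j}=\sum_{k\ge 0}\binom{\ell_j}{k}(u_i-1)^k$, the leading contribution to both determinants comes from replacing rows $2,\dots,N$ by $(\binom{\ell_j}{k})_j$ for $k=0,1,\dots,N-2$, while a common Vandermonde factor $\prod_{2\le i<i'\le N}(u_{i'}-u_i)$ cancels in the ratio. Top-row expansion of the surviving $N\times N$ determinants produces $(*)$; the prefactor $(N-1)!$ is pinned down by the Lagrange identity $\sum_i\binom{\ell_i}{m}/\prod_{r\ne i}(\ell_i-\ell_r)=0$ for $0\le m<N-1$ and $=1/(N-1)!$ for $m=N-1$, which forces the RHS of $(*)$ to tend to $1$ as $u\to 1$, as required by the normalization.

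Given $(*)$, I extract $\La^N_1(\nu,k)=[u^k]\wt\chi_{\nu,N}(u,1,\dots,1)$ as
\[
\La^N_1(\nu,k)=(N-1)!\sum_{i=1}^{N}\frac{1}{\prod_{r\ne i}(\ell_i-\ell_r)}\cdot\frac{1}{2\pi i}\oint\frac{u^{\ell_i-k-1}}{(u-1)^{N-1}}\,du
\]
along a contour enclosing both $0$ and $1$. The residue at $u=1$ is a straightforward $(N-2)$-fold derivative yielding $\binom{\ell_i-k-1}{N-2}$; the residue at $u=0$ is present only for $k\ge\ell_i$ and, via the binomial reflection $\binom{-a}{N-2}=(-1)^{N-2}\binom{a+N-3}{N-2}$, exactly cancels the $u=1$ residue in that range. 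The net value of each integral is $\binom{\ell_i-k-1}{N-2}\mathbf{1}_{k\le y_i}$ (the indicator is also compatible with the vanishing of $\binom{\ell_i-k-1}{N-2}$ on $\ell_i-N+2\le k\le\ell_i-1$). Using $y_i-y_r=\ell_i-\ell_r$, $(y_i-k+1)_{N-2}=(N-2)!\binom{\ell_i-k-1}{N-2}$, and $(N-1)(N-2)!=(N-1)!$ then identifies this with \eqref{eq1.B} term-by-term. The main obstacle is the coalescence limit producing $(*)$: both determinants in the Weyl ratio vanish when $u_2,\dots,u_N$ collapse to $1$, and one must track the leading Vandermonde cancellations carefully; once $(*)$ is secured, the residue computation and the comparison with the B-spline formula reduce to algebraic bookkeeping.
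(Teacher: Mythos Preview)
Your argument is correct. The coalescence formula $(*)$ is standard and your sketch is adequate: the Lagrange identity you invoke confirms both the removability of the pole at $u=1$ and the normalization $\wt\chi_{\nu,N}(1,\dots,1)=1$. The residue bookkeeping is accurate --- for $k\ge\ell_i$ the residues at $0$ and $1$ cancel, for $y_i<k<\ell_i$ the sole residue at $1$ vanishes because $\binom{\ell_i-k-1}{N-2}=0$, and for $k\le y_i$ only the residue at $1$ survives and is nonzero --- so the indicator $\mathbf{1}_{k\le y_i}$ is exactly right. The final identification with \eqref{eq1.B} via $(y_i-k+1)_{N-2}=(N-2)!\binom{\ell_i-k-1}{N-2}$ and $y_i-y_r=\ell_i-\ell_r$ is immediate.

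As for comparison with the paper: the paper does not prove this proposition. It is quoted from \cite[(7.10)]{BO-AdvMath}, with only the remark that the constraint $k\ge1$ appearing there can be dropped by shift invariance, and a pointer to Petrov \cite{Petrov} for an alternative derivation. The machinery of \cite{BO-AdvMath} (as mirrored in this paper for the $\CBD$ series) goes through a Cauchy-type identity for the characteristic function $\prod_i(t-(N-i))/(t-n_i)$ and a Jacobi--Trudi-type expansion, rather than a direct residue extraction from the Weyl character. Your route is more elementary and entirely self-contained for the $K=1$ case; the paper's route, on the other hand, is what generalizes uniformly to $K\ge2$ and to the Jacobi/$\CBD$ setting, where no analogue of the closed form $(*)$ is available.
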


Formally, \cite[(7.10)]{BO-AdvMath} assumes $k\ge1$; however, the whole picture is invariant under the simultaneous shift of all coordinates by an arbitrary integer, so that this constraint can be dropped. Another derivation of \eqref{eq1.E} can be obtained from the proof of Petrov \cite[Theorem 1.2]{Petrov}.

\subsection{Schur-type functions}\label{sect1.5}

Let $\Sign^+_N\subset\Sign_N$ denote the set of \emph{positive signatures of length $N$}: these are $N$-tuples of integers $\nu=(\nu_1,\dots,\nu_N)$ subject to the constraints $\nu_1\ge\dots\ge \nu_N\ge0$.

Let $\phi_0(x)\equiv1, \phi_1(x), \phi_2(x),\dots$ be an infinite sequence of
functions of a variable $x$. For a signature $\nu\in\Sign^+_N$ we define
a symmetric function of $N$ variables $x_1,\dots,x_N$ by
\begin{equation}\label{eq1.F}
\phi_{\nu, N}(x_1,\dots,x_N):=\dfrac{\det[\phi_{\nu_i+N-i}(x_j)]_{i,j=1}^N}
{\det[\phi_{N-i}(x_j)]_{i,j=1}^N}.
\end{equation}
Note that $\phi_{\emptyset, N}(x_1,\dots,x_N)\equiv1$, where $\emptyset:=(0,\dots,0)$.

The definition \eqref{eq1.F} fits into the formalism suggested by Nakagawa, Noumi, Shirakawa, and Yamada \cite{NNSY} as an alternative approach to Macdonald's \emph{9th variation of Schur functions} \cite{Mac-SLC}. The latter term is historically justified, but somewhat inconvenient to use.   For this reason we prefer to call the functions \eqref{eq1.F} the \emph{Schur-type functions}. 

If $\phi_n$ is a polynomial of degree $n$ ($n=0,1,2,\dots$), then the denominator on the right-hand side  is proportional to the Vandermonde determinant
$$
V(x_1,\dots,x_N):=\prod_{1\le i<j\le N}(x_i-x_j),
$$
which implies that the functions $\phi_{\nu,N}$ are symmetric polynomials. Such Schur-type functions are sometimes called  \emph{generalized Schur polynomials}, see e.g. Sergeev and Veselov \cite{SV}. Note that they form a basis in the algebra of $N$-variate symmetric polynomials. In the particular case when $\phi_n(x)=x^n$ ($n=0,1,2,\dots$) we obtain the ordinary Schur polynomials.  

Throughout the present paper, various Schur-type functions $\phi_{\nu,N}$ will appear. Sometimes they will be polynomials, sometimes they won't.  

\subsection{Stochastic matrices $\La^N_K$ related to Jacobi polynomials}\label{sect1.6}
We are mainly interested in the characters of the compact classical groups 
\begin{equation}\label{eq1.G}
Sp(2N) \quad (\text{series $\CC$}), \quad SO(2N+1) \quad (\text{series $\BB$}), \quad SO(2N) \quad (\text{series $\DD$}).
\end{equation}
However, a substantial part of our results hold true in the broader context of multivariate Jacobi polynomials. 

Recall that the \emph{classical Jacobi polynomials} $P^{(a,b}_n(x)$  are the orthogonal polynomials with the weight function $(1-x)^a(1+x)^b$ on $[-1,1]$(Szeg\"o \cite{Szego}). The corresponding \emph{$N$-variate Jacobi polynomials} are defined by 
\begin{equation*}
P^{(a,b)}_{\nu,N}(x_1,\dots,x_N):=\frac{\det[P^{(a,b)}_{\nu_i+N-i}(x_j)]_{i,j=1}^N}{V(x_1,\dots,x_N)}, \qquad \nu\in\Sign^+_N.
\end{equation*}
These polynomials are an instance of generalized Schur polynomials (up to constant factors); they are also a particular case of the more general 3-parameter family of orthogonal polynomials associated with the root system $BC_N$ (see e.g. Lassale \cite{Lassalle} or Heckman's lectures in \cite{HS}).
 
The three distinguished cases of the Jacobi parameters 
\begin{equation}\label{eq1.CBD}
(a,b)=(\tfrac12,\tfrac12),\; (\tfrac12,-\tfrac12),\;(-\tfrac12,-\tfrac12)
\end{equation}
correspond to characters of the groups \eqref{eq1.G} (in the same order). More precisely, set $x_i=\tfrac12(u_i+u^{-1}_i)$ and regard $u_1^{\pm1},\dots,u^{\pm1}_N$ as the matrix eigenvalues. Then the polynomials $P^{(a,b)}_{\nu,N}$, suitably renormalized, turn into the irreducible characters, with the understanding that in the case of the series $\DD$ and $\nu_N>0$ one should take the sum of two ``twin'' irreducible characters (see e.g. Okounkov and Olshanski \cite{OO-KirSem}).

Constant factors may be neglected here, because we will deal with the normalized characters and the normalized polynomials
\begin{equation}\label{eq1.K}
\wt P^{(a,b)}_{\nu, N}(x_1,\dots,x_N):=\frac{P^{(a,b)}_{\nu, N}(x_1,\dots,x_N)}{P^{(a,b)}_{\nu, N}(1,\dots,1)}.
\end{equation}

\begin{definition}\label{def1.A}
With each couple $(N,K)$ of natural numbers $N>K\ge1$ we associate a matrix $\La^N_K$ of the format $\Sign^+_N\times\Sign^+_K$: the matrix entries $\La^N_K(\nu,\ka)$ are the coefficients in the expansion
\begin{equation}\label{eq1.H}
\wt P^{(a,b)}_{\nu, N}(x_1,\dots,x_K, 1,\dots,1)=\sum_{\ka\in\Sign^+_K}\La^N_K(\nu,\ka)\wt P^{(a,b)}_{\ka, K}(x_1,\dots,x_K).
\end{equation}
The matrix depends on the Jacobi parameters $(a,b)$, but we suppress them to simplify the notation. Our assumptions on the Jacobi parameters are the following:
\begin{equation}\label{eq1.Jacobi}
a>-1, \quad b>-1, \quad a+b\ge-1.
\end{equation}
\end{definition}

Here the first two inequalities ensure the integrability of the weight function. The third inequality is an additional technical assumption; it is obviously satisfied for the three special values \eqref{eq1.CBD}.

Our goal is to find explicit formulas for the quantities $\La^N_K(\nu,\ka)$, with the emphasis on the three distinguished cases \eqref{eq1.CBD} corresponding to the $\CBD$ characters. 

One may think of  $\La^N_K(\nu,\ka)$ as a function of the variable $\nu\in\Sign^+_N$, with $\ka\in\Sign^+_K$ being an index. Or, conversely, as a function of $\ka$ indexed by $\nu$. The first viewpoint is motivated by asymptotic representation theory, where one is interested in large-$N$ limits (Okounkov and Olshanski \cite{OO-Jack}, \cite{OO-Jacobi}). The second viewpoint has its origin in spectral problems of classical representation theory and leads, in the distinguished cases \eqref{eq1.CBD}, to multidimensional discrete splines. 

From the branching rule of the multivariate Jacobi polynomials (\cite[Proposition 7.5]{OO-Jacobi}) and the condition $a+b\ge-1$ it follows that the coefficients $\La^N_K(\nu,\ka)$ are nonnegative (in the three special cases \eqref{eq1.CBD} this also follows from the classical branching rule of the symplectic and orthogonal characters, see Zhelobenko \cite{Zhelobenko}). Next, the row sums of the matrix entries are equal to $1$ (to see this, substitute $x_1=\dots=x_K=1$ in \eqref{eq1.H}). This means that $\La^N_K(\nu,\ccdot)$ is a probability distribution on the set $\Sign^+_K$, for any fixed $\nu\in\Sign^+_N$. In other words, $\La^N_K$ is a stochastic matrix, and its entries $\La^N_K(\nu,\ka)$ may be viewed as transition probabilities between the sets $\Sign^+_N$ and $\Sign^+_K$. 

We proceed to the description of the main results.

\subsection{A Cauchy-type identity involving $\La^N_K$ (Theorem A)}\label{sect1.7}

Throughout the paper we use the notation
\begin{equation*}
L:=N-K+1
\end{equation*} 
and
\begin{equation*}
\epsi:=\frac{a+b+1}2.
\end{equation*}
We often use the parameters $(a,\epsi)$ instead of $(a,b)$. 

For a positive integer $N$ and $\nu\in\Sign^+_N$ we set
\begin{equation}\label{eq1.F_N}
F_N(t;\nu;\epsi):=\prod_{i=1}^N\frac{t^2-(N-i+\epsi)^2}{t^2-(\nu_i+N-i+\epsi)^2}.
\end{equation}
This is an even rational function of $t$. We call it the \emph{characteristic function} of the signature $\nu$. 

We also set
\begin{equation}\label{eq1.d_N}
d_N(\nu;\epsi):=\prod_{1\le i<j\le N}((\nu_i+N-i+\epsi)^2-(\nu_j+N-j+\epsi)^2).
\end{equation}
We need $d_N(\nu,\epsi)$ to be nonzero. Because of this (and for some other reasons) we impose the additional constraint $a+b\ge-1$, meaning that $\epsi\ge0$. This will guarantee $d_N(\nu,\epsi)\ne0$. In the distinguished cases \eqref{eq1.CBD} we have $\epsi=1,\frac12,0$. 

Next, we introduce a sequence of functions 
\begin{equation}\label{eq1.g_k}
g_k(t)=g_k(t;a,\epsi,L):={}_4F_3\left[\begin{matrix}-k,\, k+2\epsi,\, L,\, L+a\\-t+L+\epsi,\,
t+L+\epsi,\, a+1\end{matrix}\Biggl|1\right], \quad k=0,1,2,\dots\,.
\end{equation}
The right-hand side is a balanced (=Saalsch\"utzian) hypergeometric series (Bailey \cite[section 2.5]{Bailey}). Because $k$ is a nonnegative integer, the series terminates and represents a rational function of variable $t$. Because of the symmetry  $g_k(t)=g_k(-t)$, it is actually a rational function of $t^2$. Note that $g_0(t)\equiv1$. 

Note also that in a limit transition as $L\to\infty$, combined with a change of the variable $t$, the functions $g_k(t)$ degenerate into the Jacobi polynomials, see section \ref{sect9.3}. 

From the sequence $\{g_k(t)\}$ we form the Schur-type functions according to \eqref{eq1.F}:
\begin{equation}\label{eq1.Gkappa}
G_{\ka,K}(t_1,\dots,t_K)=\frac{\det[g_{\ka_i+K-i}(t_j)]_{i,j=1}^K}{\det[g_{K-i}(t_j)]_{i,j=1}^K}, \quad \ka\in\Sign^+_K\,.
\end{equation}

\begin{theoremA}[see Theorem \ref{thm4.A}]\label{thmA}
The following identity holds true
\begin{equation*}
\prod_{j=1}^KF_N(t_j;\nu;\epsi)=\sum_{\ka\in\Sign^+_K}\frac{\La^N_K(\nu,\ka)}{d_K(\ka;\epsi)}\,G_{\ka,K}(t_1,\dots,t_K).
\end{equation*}
The sum on the right-hand side is finite, and the quantities $\La^N_K(\nu,\ka)$ are uniquely determined by this formula. 
\end{theoremA}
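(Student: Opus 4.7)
\medskip

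\noindent\textbf{Proof plan for Theorem A.}

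\emph{Step 1 (Finiteness and uniqueness).} I would first dispose of the structural claims. Finiteness follows from the branching rule for multivariate Jacobi polynomials in \cite{OO-Jacobi}: the coefficient $\La^N_K(\nu,\ka)$ vanishes unless $\ka$ satisfies explicit interlacing-type inequalities bounded by $\nu$, so only finitely many terms appear for fixed $\nu$. Uniqueness amounts to the linear independence of the Schur-type functions $G_{\ka,K}(t_1,\dots,t_K)$ as $\ka$ ranges over $\Sign^+_K$. Because each $g_k(t)$ is a rational function of $t^2$ with poles at $t^2=(L+\epsi+r)^2$, $r=0,\dots,k-1$, the determinantal definition \eqref{eq1.Gkappa} endows the family $\{G_{\ka,K}\}$ with a filtration by ``degree'' (degree of poles in the $t_j$'s) that is triangular with respect to the lexicographic order on $\ka$. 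This triangularity yields linear independence.

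\emph{Step 2 (Reduction to $K=1$).} I would next argue that it suffices to establish the \emph{univariate identity}
\begin{equation*}
F_N(t;\nu;\epsi)=\sum_{k\ge 0}\La^N_1(\nu,k)\,g_k(t).
\end{equation*}
The passage from $K=1$ to arbitrary $K\ge 2$ should be purely determinantal. Start from the branching relation \eqref{eq1.H}, and use the Schur-type presentation \eqref{eq1.F} of $\wt P^{(a,b)}_{\ka,K}$ to rewrite the right-hand side as a ratio of determinants. Simultaneously, expand $\wt P^{(a,b)}_{\nu,N}(x_1,\dots,x_K,1,\dots,1)$ via the Jacobi-Trudi-type determinant obtained from \eqref{eq1.F}, treating the repeated argument $1$ by an appropriate confluence of columns (L'H\^opital-type limit). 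A Cauchy-Binet-style manipulation should then factor the $K$-variable identity into a product of $K$ copies of the univariate identity; the determinants reassemble into $G_{\ka,K}$, and the Vandermonde-like factor $d_K(\ka;\epsi)$ emerges from the normalization $P^{(a,b)}_{\ka,K}(1,\dots,1)$.

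\emph{Step 3 (The univariate identity).} This is the principal computational step. By the defining expansion \eqref{eq1.H} with $K=1$, $\La^N_1(\nu,k)$ is simply the coefficient of $\wt P^{(a,b)}_{k,1}(x)$ in the polynomial $\wt P^{(a,b)}_{\nu,N}(x,1,\dots,1)$; an explicit formula comes from expanding the defining determinant and evaluating Jacobi-polynomial derivatives at $1$. On the other hand, $F_N(t;\nu;\epsi)$ admits a partial fraction decomposition in $t^2$ with simple poles at $t^2=(\nu_i+N-i+\epsi)^2$, whose residues are products of ratios of the type appearing in \eqref{eq1.F_N}. To identify the two sides, I would compute the residue of the right-hand side at the same poles by using the known pole structure of $g_k(t)$ (a pole at $t=\pm(L+\epsi+r)$ occurs only when $k>r$, with residue expressible via a balanced ${}_3F_2$); the matching condition should then reduce to a Saalsch\"utz summation.

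\emph{Main obstacle.} The core difficulty lies in Step~3: the required hypergeometric identity is not entirely standard, since it relates a combinatorial/branching quantity $\La^N_1(\nu,k)$ (a ``Jacobi-side'' object) to a Racah-type ${}_4F_3$ quantity $g_k(t)$. The identity is essentially a duality between the Jacobi and Racah pictures, and verifying it may require either a Bailey transformation of $g_k(t)$ followed by a termwise matching, or a residue calculus argument combined with a careful indexing reshuffle. A subordinate obstacle in Step~2 is the precise normalization: tracking the Vandermonde-type factors so that $d_K(\ka;\epsi)^{-1}$ emerges correctly requires an evaluation of the determinant $\det[P^{(a,b)}_{\ka_i+K-i}(1)]$ and its comparison with $\det[g_{K-i}(t_j)]$, which itself hinges on a product formula for $P^{(a,b)}_{\ka,K}(1,\dots,1)$.
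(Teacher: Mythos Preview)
Your Step~2 contains a genuine gap that undermines the whole strategy. The functions $g_k(t)=g_k(t;a,\epsi,L)$ depend on the parameter $L=N-K+1$, so the $K=1$ identity you want to prove in Step~3 involves $g_k(\,\cdot\,;a,\epsi,N)$, whereas the target identity for general $K$ is built from $g_k(\,\cdot\,;a,\epsi,N-K+1)$. These are different rational functions with different pole locations, so the $K$-variable statement cannot be obtained by ``stacking $K$ copies of the univariate identity'' via Cauchy--Binet. Relatedly, there is no determinantal expression for $\La^N_K(\nu,\ka)$ in terms of the numbers $\La^N_1(\nu,k)$: the Jacobi--Trudi-type formula that does exist (Theorem~\ref{thm4.B}) has entries $(g_{K-j}F_N:g_{k_i})$ with $g_{K-j}$ nontrivial, and it is derived \emph{from} Theorem~A rather than providing a route to it. Your proposed confluence/L'H\^opital argument in $x$-space also does not explain how the ${}_4F_3$ functions $g_k(t)$ would emerge; the passage from the $x$-variables to the $t$-variables is the crux of the matter, and it is not a Cauchy--Binet manipulation.

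The paper's route is quite different and avoids any reduction to $K=1$. It first establishes a \emph{binomial formula} (Proposition~\ref{prop3.A}) expanding $\wt P^{(a,b)}_{\nu,N}$ in multiparameter Schur polynomials $S_{\mu,N}((n_i+\epsi)^2\mid\epsi^2,(\epsi+1)^2,\dots)$; this immediately yields a \emph{coherency relation} (Theorem~\ref{thm3.A}) linking these shifted Schur values at $\nu$ and at $\ka$ through $\La^N_K(\nu,\ka)$. One then multiplies the coherency relation by dual Schur functions $\si_{\mu,K}(t_1^2,\dots,t_K^2\mid\dots)$ and sums over $\mu$. On one side the Cauchy-type identity of Proposition~\ref{prop2.A} collapses the sum to $\prod_j F_N(t_j;\nu;\epsi)$; on the other side a separate computation (Lemma~\ref{lemma4.B}) identifies the inner $\mu$-sum with $G_{\ka,K}/d_K(\ka;\epsi)$. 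The normalization constant $d_K(\ka;\epsi)$ drops out naturally from comparing the determinants, not from evaluating $P^{(a,b)}_{\ka,K}(1,\dots,1)$ directly. In short, the bridge from $x$-space to $t$-space runs through the multiparameter/dual Schur machinery, and this is exactly the ingredient missing from your outline.
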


This result has the form of the Cauchy identity connecting two families of multivariate functions, both indexed by the elements $\ka\in\Sign^+_K$. Specifically, these functions are $\nu\mapsto \La^N_K(\nu,\ka)/d_K(\ka;\epsi)$ and  $G_{\ka,K}(t_1,\dots,t_K)$. In the first family, we take as the variables the shifted coordinates $n_i:=\nu_i+N-i$, where $i=1,\dots,N$. Then on the other side of the identity we get a double product over the two sets of variables, 
$$
\prod_{i=1}^N\prod_{j=1}^K \frac{t_j^2-(N-i+\epsi)^2}{t_j^2-(n_i+\epsi)^2},
$$
which is separately symmetric with respect to the permutations of $n_1,\dots,n_N$ and $t_1,\dots,t_K$ --- just as in the classical Cauchy identity. 

\subsection{Determinantal formula for the matrix entries $\La^N_K(\nu,\ka)$ (Theorem B)}\label{sect1.8}

To state the result we need a few definitions. 
Given an integer $L\ge2$, we consider the infinite grid 
\begin{equation}\label{eq1.N}
\AAA(\epsi,L):=\{A_1,A_2,\dots\}\subset \R_{>0}, \qquad A_m:=L+\epsi+m-1, \quad m=1,2,\dots,
\end{equation}
and we denote by $\FF(\epsi,L)$ the vector space whose elements are even rational functions $f(t)$ of a complex variable $t$, which are regular at $t=\infty$ and such that their only singularities are simple poles contained in the set $(-\AAA(\epsi,L))\cup \AAA(\epsi,L)$. 
Obviously, 
$$
\FF(\epsi,2)\supset \FF(\epsi,3)\supset\dots,
$$
and all these spaces have countable dimension. We show that the functions $g_k(t)=g_k(t;a,\epsi,L)$ form a basis of $\FF(\epsi, L)$. Given $\phi\in\FF(\epsi,L)$, we denote by $(\phi:g_k)$ the coefficients in the expansion
\begin{equation}\label{eq1.phi}
\phi(t)=\sum_{k=0}^\infty (\phi:g_k) g_k(t).
\end{equation}

\begin{theoremA}[see Theorem \ref{thm4.B}]\label{thmB}
With the notation introduced above we have the following determinantal formula
\begin{equation}\label{eq1.detformula}
\frac{\La^N_K(\nu,\ka)}{d_K(\ka;\epsi)}=\det\left[(g_{K-j}F_N: g_{\ka_i+K-i})\right]_{i,j=1}^K.
\end{equation}
\end{theoremA}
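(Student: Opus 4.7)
The plan is to derive the determinantal formula of Theorem B directly from Theorem A by means of the Cauchy--Binet identity, using as input the fact (established separately in the paper) that $\{g_k\}$ is a basis of $\FF(\epsi,L)$. First I would clear the denominators in the Schur-type functions $G_{\ka,K}$ appearing on the right-hand side of Theorem A. Multiplying both sides of that identity by $\det[g_{K-i}(t_j)]_{i,j=1}^K$, and absorbing each factor $F_N(t_j;\nu;\epsi)$ into the $j$-th column of the resulting determinant on the left, one obtains
\begin{equation*}
\det[g_{K-i}(t_j)\, F_N(t_j;\nu;\epsi)]_{i,j=1}^K = \sum_{\ka\in\Sign^+_K}\frac{\La^N_K(\nu,\ka)}{d_K(\ka;\epsi)}\, \det[g_{\ka_i+K-i}(t_j)]_{i,j=1}^K.
\end{equation*}

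Next I would check that every entry $g_{K-i}(t)F_N(t;\nu;\epsi)$ on the left belongs to $\FF(\epsi,L)$. The poles of $g_{K-i}$ are simple and sit at the points $\pm(L+\epsi+m)$ with $0\le m\le K-i-1$, whose integer levels $L+m$ satisfy $L+m\le N-1$; on the other hand, the poles of $F_N$ that survive cancellation against its numerator $\prod_{i=1}^N(t^2-(N-i+\epsi)^2)$ are located at $\pm(\nu_i+N-i+\epsi)$ with $\nu_i+N-i\ge N$. These two pole sets are disjoint, so $g_{K-i}F_N$ has only simple poles in $(-\AAA(\epsi,L))\cup\AAA(\epsi,L)$ and is regular at infinity. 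Hence $g_{K-i}F_N\in\FF(\epsi,L)$, and the expansion $g_{K-i}(t)F_N(t)=\sum_m(g_{K-i}F_N:g_m)g_m(t)$ is a finite linear combination.

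The Cauchy--Binet formula then represents the left-hand determinant as
\begin{equation*}
\det[g_{K-i}(t_j)F_N(t_j)]_{i,j=1}^K = \sum_S\det\bigl[(g_{K-i}F_N:g_{s_l})\bigr]_{i,l=1}^K\cdot\det[g_{s_l}(t_j)]_{l,j=1}^K,
\end{equation*}
where $S=\{s_1>\dots>s_K\ge 0\}$ ranges over $K$-element subsets of $\Z_{\ge 0}$ listed in decreasing order. The rule $s_l=\ka_l+K-l$ gives a bijection between such subsets and signatures $\ka\in\Sign^+_K$, and under this labeling the second determinant above is exactly the numerator of $G_{\ka,K}(t_1,\dots,t_K)$. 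Dividing both sides by $\det[g_{K-i}(t_j)]_{i,j=1}^K$ reproduces the expansion of $\prod_{j=1}^K F_N(t_j;\nu;\epsi)$ in the $G_{\ka,K}$'s, this time with explicit coefficients $\det[(g_{K-i}F_N:g_{\ka_l+K-l})]_{i,l=1}^K$. Invoking the uniqueness clause of Theorem A, together with the invariance of the determinant under transposition of the matrix, then yields the formula asserted in Theorem B.

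The real substance of the argument lies upstream: proving Theorem A itself, and verifying that $\{g_k\}$ is a basis of $\FF(\epsi,L)$ (so that each expansion $g_{K-i}F_N=\sum_m(g_{K-i}F_N:g_m)g_m$ is a finite sum, making the Cauchy--Binet step a strictly finite-dimensional manipulation). Granted these, the derivation of Theorem B is a clean Cauchy--Binet computation whose only additional input is the routine pole-count certifying $g_{K-i}F_N\in\FF(\epsi,L)$.
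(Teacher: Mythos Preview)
Your proposal is correct and follows essentially the same route as the paper's proof of Theorem~\ref{thm4.B}: both multiply through by $\det[g_{K-i}(t_j)]$, use that $g_{K-i}F_N\in\FF(\epsi,L)$ (the pole-disjointness check you give is exactly Step~3 of Lemma~\ref{lemma4.A}), and then compare coefficients against the linearly independent antisymmetric determinants $\det[g_{k_i}(t_j)]$. The only cosmetic difference is that you name the coefficient extraction ``Cauchy--Binet'' explicitly and note the transposition needed to match the index placement $(i,j)\mapsto(g_{K-j}F_N:g_{k_i})$, whereas the paper simply asserts the unique expansion with coefficients $\det[(h_{K-j}:g_{k_i})]$; these are the same argument.
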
 

Note that the characteristic function \eqref{eq1.F_N} lies in the space $\FF(\epsi,N)$.  More generally, under our assumption that $L=N-K+1$, the function $g_{K-j}F_N$ lies in $\FF(\epsi,L)$ for any $j=1,\dots, K$. This  implies that the quantities $(g_{K-j}F_N: g_{\ka_i+K-i})$ are well defined. . 

The determinantal formula \eqref{eq1.detformula} resembles the classical Jacobi-Trudi formula for the Schur functions (or rather its version for the Macdonald's 9th variation of Schur functions). This result is deduced from Theorem A in the same way as the classical Jacobi-Trudi is deduced from the Cauchy identity. 

Theorem \ref{thmA} and Theorem \ref{thmB} show that if we treat $\nu$ as a variable and $\ka$ as a parameter, then the functions 
$$
\nu\mapsto \frac{\La^N_K(\nu,\ka)}{d_K(\ka;\epsi)}
$$
share two fundamental properties of Schur-type functions: Cauchy-type formula and Jacobi-Trudi-type formulas. 

\subsection{Computation of the matrix entries $\La^N_K(\nu,\ka)$ (Theorems \ref{thmC} and \ref{thmD})}\label{sect1.9}

Our subsequent actions are driven by the desire to find an explicit expression for the entries of the $K\times K$ matrix on the right-hand side of the formula \eqref{eq1.detformula}. This leads us to the problem of computing  the coefficients $(\phi:g_k)$ of the expansion \eqref{eq1.phi} for a given function  $\phi\in\FF(\epsi,L)$. 
Solving this problem will allow us find explicitly the matrix entries  $\La^N_K(\nu,\ka)$ by means of the determinantal formula \eqref{eq1.detformula}, because the matrix entries on the right-hand side are of the form $(\phi:g_k)$ with $\phi=g_{K-j}F_N\in\FF(\epsi,L)$ and  $k=\ka_i+K-i$. 

Our approach to the problem is the following. For a function $\phi\in \FF(\epsi,L)$, we denote by  $\Res_{t=A_m}{\phi(t)}$ its residue of $\phi(t)$ at a given point $t=A_m\in\AAA(\epsi,L)$. Because $\phi(t)$ is rational, it has finitely many nonzero poles only. We need the most natural and simplest  basis of the space $\FF(\epsi,L)$, which is formed by the functions 
\begin{equation}\label{eq1.e}
e_0(t)\equiv1, \qquad e_m(t):=\frac1{t-A_m}-\frac1{t+A_m}, \quad m\in\Z_{\ge1}.
\end{equation}
In accordance with the general notation \eqref{eq1.phi}, we denote by $(e_m:g_k)$ the transition coefficients between the bases $\{e_m\}$ and $\{g_k\}$. 

Next, it is not difficult to show  that for any  $\phi\in \FF(\epsi,L)$, 
\begin{equation}\label{eq1.I}
(\phi:g_k)=\begin{cases} \sum_{m\ge k}\Res_{t=A_m}(\phi(t))(e_m:g_k), & k\ge1, \\
\phi(\infty)+\sum_{m\ge1}\Res_{t=A_m}(\phi(t))(e_m: g_0), & k=0
\end{cases}
\end{equation}
(see Proposition \ref{prop5.A}). The series in \eqref{eq1.I} in fact terminates, because the number of poles is finite.

\begin{theoremA}[see Theorem \ref{thm5.A}]\label{thmC}
For the transition coefficients $(e_m:g_k)$ there is an explicit expression through a terminating hypergeometric series of type ${}_4F_3$.
\end{theoremA}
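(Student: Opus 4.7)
The proof will compute $(e_m:g_k)$ as the entries of the inverse of a triangular matrix, then identify that inverse as an explicit ${}_4F_3$.

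Paragraph 1 (triangular structure): Because the Pochhammer factors $(-t+L+\epsi)_j$ and $(t+L+\epsi)_j$ in the denominator of the defining ${}_4F_3$ for $g_k$ vanish only for $j\le k$, the function $g_k$ has poles contained in $\{\pm A_1,\dots,\pm A_k\}$. Hence $g_k\in\Span(e_0,\dots,e_k)$, and the matrix $M_{m,k}$ of the basis change $\{g_k\}\to\{e_m\}$, defined by $M_{0,k}:=g_k(\infty)=1$ and $M_{m,k}:=\Res_{t=A_m}g_k(t)$ for $m\ge 1$, is upper triangular with nonzero diagonal. The desired coefficients $(e_m:g_k)$ form the entries of the inverse matrix $M^{-1}$, so the task reduces to inverting $M$ and recognizing the result as a ${}_4F_3$.

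Paragraph 2 (residue computation): The pole of a summand of $g_k$ at $t=A_m=L+\epsi+m-1$ requires $j\ge m$ and comes from the single zero factor $-t+L+\epsi+(m-1)$ in $(-t+L+\epsi)_j$. Shifting the summation variable by $j\mapsto m+\ell$ and using $(x)_{m+\ell}=(x)_m(x+m)_\ell$, one extracts the closed form
\begin{equation*}
\Res_{t=A_m}g_k(t)=\frac{(-1)^m(-k)_m(k+2\epsi)_m(L)_m(L+a)_m}{(m-1)!\,m!\,(a+1)_m\,(2L+2\epsi+m-1)_m}\,{}_4F_3\!\left[\begin{matrix}m-k,\,k+2\epsi+m,\,L+m,\,L+a+m\\a+1+m,\,m+1,\,2L+2\epsi+2m-1\end{matrix};1\right].
\end{equation*}
A direct parameter check shows this ${}_4F_3$ is again terminating and Saalschützian (balanced), inheriting the structure of $g_k$.

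Paragraph 3 (inversion, verification, and obstacle): Guided by the classical self-dual structure of terminating Saalschützian ${}_4F_3$ series --- the analytic shadow of the Racah--Wilson duality of the associated orthogonal polynomials --- I would propose a ${}_4F_3$ ansatz for $(e_m:g_k)$ whose parameters are obtained from those of $M_{m,k}$ by a Whipple--Bailey transformation that swaps the summation index $k$ with the pole label $m$. To verify the ansatz I would substitute into the biorthogonality relation
\begin{equation*}
\sum_{k=m'}^{m}(e_m:g_k)\,M_{m',k}=\delta_{m,m'},\qquad m,m'\ge 1,
\end{equation*}
factor out the common Pochhammer prefactors, and show that the remaining finite double sum collapses, after one application of a Whipple transformation, to a balanced ${}_3F_2(\cdot;1)$ which evaluates to a Kronecker delta by the Pfaff--Saalschütz summation theorem. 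The case $m=0$ or $k=0$ is handled separately using $g_0=e_0=1$. The main obstacle is pinpointing the correct ${}_4F_3$ ansatz among the forms produced by the six-parameter symmetry group of terminating balanced ${}_4F_3$ series: only one choice will make the verification sum close up cleanly, and navigating the Pochhammer bookkeeping through the Whipple transformation is the delicate technical step.
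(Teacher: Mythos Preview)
Your Paragraphs 1 and 2 are fine: the triangularity is correct, and the residue computation yielding a balanced ${}_4F_3$ for $M_{m,k}=(g_k:e_m)$ is straightforward. The gap is Paragraph 3. You propose to invert the triangular matrix by guessing a ${}_4F_3$ ansatz for $(e_m:g_k)$ and then checking the biorthogonality relation, but you have not produced the ansatz and you acknowledge that locating it among the Whipple--Bailey orbit is the ``main obstacle.'' Even granting a correct guess, the verification sum $\sum_{k}(e_m:g_k)M_{m',k}$ is a single sum whose summand is a \emph{product} of two terminating balanced ${}_4F_3$ series, i.e.\ effectively a triple sum; reducing this to a ${}_3F_2$ evaluable by Pfaff--Saalsch\"utz after one Whipple move is optimistic and you give no indication of which contiguous parameters would align to make it collapse. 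As written, the proof stops at the point where the actual work begins.

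The paper avoids this inversion problem entirely by inserting an intermediate basis
\[
f_\ell(t)=\frac{1}{(-t+L+\epsi)_\ell(t+L+\epsi)_\ell},\qquad \ell\ge 0,
\]
and factoring $(e_m:g_k)=\sum_{\ell=k}^{m}(e_m:f_\ell)(f_\ell:g_k)$. Both factors are \emph{closed products}, not hypergeometric sums: $(e_m:f_\ell)$ comes from the elementary partial-fraction identity $\frac{1}{z-a_m}=\sum_{\ell=1}^{m}\frac{\prod_{j<\ell}(a_m-a_j)}{\prod_{j\le\ell}(z-a_j)}$ with $z=t^2$, while $(f_\ell:g_k)$ is read off from the classical expansion of $((1-x)/2)^\ell$ in Jacobi polynomials, since the coefficients of $g_k$ in the $f_\ell$ match those of $\wt P^{(a,b)}_k$ in the monomials $((1-x)/2)^\ell$. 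The ${}_4F_3$ then appears not as an unknown to be guessed but as the \emph{definition} of the single sum $\sum_\ell(e_m:f_\ell)(f_\ell:g_k)$. This route requires no ansatz, no matrix inversion, and no nontrivial transformation identity --- only recognising that the two elementary factors combine into the standard ${}_4F_3$ term ratio. If you want to salvage your approach, the cleanest fix is precisely to interpolate this basis $\{f_\ell\}$ rather than attempting the direct inversion.
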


Combining \eqref{eq1.I} with Theorem \ref{thmC} we obtain an expression for the matrix entries on the right-hand of \eqref{eq1.detformula}. 
The final result looks complicated, because it involves the residues of the functions $\phi=g_{K-j}F_N$, which are given by certain hypergeometric series of type ${}_4F_3$, and also the transition coefficients, which are given by some other ${}_4F_3$ series. 

However, the situation radically simplifies for the symplectic and orthogonal characters. 

\begin{theoremA}\label{thmD}
Consider the three distinguished cases \eqref{eq1.CBD} of Jacobi parameters corresponding to the characters of the classical groups of type $\CBD$. Then the matrix entries $(g_{K-j}F_N: g_{\ka_i+K-i})$ on the right-hand side of \eqref{eq1.detformula} admit an explicit elementary expression. 
\end{theoremA}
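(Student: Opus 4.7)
The plan is to start from the determinantal formula of Theorem B together with the residue expansion (1.I), which reduces every matrix entry $(g_{K-j}F_N : g_{\ka_i+K-i})$ to a finite linear combination of residues $\Res_{t=A_m}(g_{K-j}F_N)$ weighted by the transition coefficients $(e_m : g_k)$ that Theorem C evaluates as a terminating ${}_4F_3$. Both types of ingredients — the series defining $g_k(t)$ itself and the transition coefficients — are balanced ${}_4F_3$'s. The goal is thus to show that, after specialization of $(a,\epsi)$ to the three distinguished values $(\tfrac12,1)$, $(\tfrac12,\tfrac12)$, $(-\tfrac12,0)$, these ${}_4F_3$'s collapse to elementary closed forms, and the remaining $m$-sum then also telescopes to something elementary.

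First I would specialize the hypergeometric function $g_k(t)$ from \eqref{eq1.g_k} in each of the three CBD cases. The crucial feature is that $a=\pm\tfrac12$ puts the lower parameter $a+1$ equal to $\tfrac12$ or $\tfrac32$, while $2\epsi\in\{0,1,2\}$ is a nonnegative integer, so the pair $(-k,k+2\epsi)$ in the numerator collapses in a uniform way. Under these conditions the balanced ${}_4F_3$ falls into the scope of a classical Whipple-type quadratic transformation, reducing it to a ${}_3F_2$; in the series $\DD$ case ($\epsi=0$, $a=-\tfrac12$) this ${}_3F_2$ is Saalsch\"utzian and is summed in closed form by Saalsch\"utz's theorem. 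Parallel but slightly different reductions handle series $\BB$ and $\CC$. This step gives an elementary expression for $g_k(t)$.

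Next I would compute $\Res_{t=A_m}\bigl(g_{K-j}(t) F_N(t;\nu;\epsi)\bigr)$ by first computing the elementary residues of $F_N$ at its simple poles $t=\pm(\nu_i+N-i+\epsi)$, and then evaluating $g_{K-j}$ at those poles using the simplification above. The transition coefficients $(e_m:g_k)$ from Theorem \ref{thmC} are a balanced ${}_4F_3$ of the same family, with the same half-integer lower parameter $a+1$, and therefore admit the same reduction. Plugging the resulting elementary forms into \eqref{eq1.I} gives an elementary formula for $(g_{K-j}F_N : g_{\ka_i+K-i})$, possibly after a last application of Saalsch\"utz's theorem to the remaining finite $m$-sum (or after recognizing that sum as a Vandermonde-like determinant that evaluates combinatorially).

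The main obstacle is twofold. First, one must find a single framework that handles all three CBD cases (they differ in the value of $\epsi$ and hence in which parameters become integer versus half-integer), or else carry out three parallel arguments. Second, and more serious, one must verify that the apparent boundary issues produced by specialization — for instance the vanishing of $2\epsi$ in the series $\DD$ case, or coincidences among numerator and denominator parameters — do not invalidate the Whipple/Saalsch\"utz reductions; this typically requires a careful limit argument or a direct check by contiguous relations. A pleasant bookkeeping fact that should guide the calculation is that the answer is expected to display the piecewise polynomial, discrete spline character announced in the introduction, so the elementary expression should present itself as a signed sum of falling-factorial ``truncated power'' monomials supported on the relevant lattice interval.
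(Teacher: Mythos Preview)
Your overall strategy matches the paper's: use the determinantal formula (Theorem~B), the residue expansion \eqref{eq1.I}, and then simplify both the functions $g_k(t)$ and the transition coefficients $(e_m:g_k)$ in the three distinguished cases. Two points deserve correction.

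First, the two simplifications are \emph{not} ``the same reduction.'' For $g_k(t)$ the paper reverses the terminating sum, then applies an identity of Krattenthaler--Rao type obtained from Gauss' quadratic transformation via a beta integral, and finally sums the resulting ${}_2F_1$ by Chu--Vandermonde (not Saalsch\"utz). For the transition coefficients $(e_m:g_k)$, the relevant ${}_4F_3$ (from Theorem~C) has a different parameter pattern: in the $\CBD$ cases the triple $(k+1,\,k+a+1,\,2k+2\epsi+1)$ specializes to $(A,\,A+\tfrac12,\,2A+1)$ or its shift, and the series is summed by a separate closed-form evaluation (Slater's formula for ${}_4F_3[-n,A,A+\tfrac12,2B+n;\,B,B+\tfrac12,2A+1\,|\,1]$). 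So you should not expect one mechanism to cover both; you need two distinct summation identities.

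Second, the final $m$-sum does \emph{not} telescope or require a further Saalsch\"utz step. Once $g_{K-j}(t)$, the residues of $F_N$, and $E(m,k)$ are all elementary, the expression \eqref{eq1.I} is already a finite sum of elementary terms --- and that finite sum \emph{is} the answer. In particular, for $K=1$ it is exactly the spline-type sum over those $i$ with $\nu_i-i+1\ge k$ that you anticipate in your last paragraph. The piecewise-polynomial structure comes from the index set of the sum varying with $k$, not from evaluating the sum in closed form.
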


A detailed formulation of this result is given in Theorem \ref{thm8.A}. It turns out that in the three distinguished cases, the two families of ${}_4F_3$ series (for the functions $g_k(t)$ and for the coefficients $(e_m:g_k)$) are miraculously summed up explicitly. This is shown in Theorem \ref{thm6.A} and Theorem \ref{thm7.A}, respectively.  

In the particular case $K=1$ we obtain symplectic and orthogonal versions of the discrete B-spline $\CBD$ (sect. \ref{sect8.2}).

\subsection{Notes}\label{sect1.10}

1. The present paper is a continuation of the work \cite{BO-AdvMath} by Borodin and the author. In \cite{BO-AdvMath}, similar results were obtained in type $\Aa$, that is, for the characters of the unitary groups $U(N)$. However, the case of symplectic and orthogonal characters, and especially that of multivariate Jacobi polynomials, is more difficult. 

2. Part of the results of \cite{BO-AdvMath} was reproved and extended by Petrov \cite{Petrov}. His method is very different; it allowed to compute the correlation kernel of a two-dimensional determinantal point process generated by the stochastic matrices $\La^N_{N-1}$ related to the characters of the unitary groups. The explicit expression for the matrix elements of $\La^N_K$ from \cite{BO-AdvMath} is then obtained as a direct corollary. Moreover, Petrov also obtained a $q$-version of these results. On the other hand, Petrov's approach does not give a Cauchy-type identity. 

3. In a scaling limit, the stochastic matrices $\La^N_K$ of all four types $\Aa$, $\BB$, $\CC$, $\DD$ degenerate to certain continuous Markov kernels, which are related to the corner processes from random matrix theory. These Markov kernels are given by determinantal expressions involving continuous spline functions. See  the author  \cite{Ols-JLT}, Faraut \cite{Faraut}, and Zubov \cite{Zubov}. The results of these works do not rely on \cite{BO-AdvMath}. On the other hand, they can be derived from the earlier results of Defosseux  \cite{Defosseux} about the correlation functions of corner processes.

4. The restriction problem for characters of classical groups and multivariate Jacobi polynomials was also considered by Gorin and Panova \cite{GP}, but from a different point of view. Namely, these authors were interested in finding explicit formulas for the resulting functions, and did not deal with their spectral expansion. Our approach describes the dual picture, related to that of \cite{GP} by a Fourier-type  transform. This reveals such  aspects of the problem, as the Cauchy-type identity from Theorem \ref{thmA} or the connection with discrete splines, which do not arise in the context of \cite{GP}. It seems to me that both approaches complement each other well. In the case of the unitary group characters, it is not too difficult (at least, for $K=1$) to derive the formulas of \cite{BO-AdvMath} from those of \cite{GP}, but in the case of the $\CBD$ characters this does not seem to be an easy task. 

5. The present work, as well as \cite{BO-AdvMath}, originated from a problem of asymptotic representation theory. 
Our formulas for the matrix elements $\La^N_K(\nu, \ka)$ in the case of the $\CBD$ characters are well suited for performing the large-$N$ limit transition in the spirit of \cite[section 8]{BO-AdvMath}. \footnote{I decided to postpone this material for a separate publication so as not to increase the size of the present paper. In connection with this topic, see also  section \ref{sect9.1} below.} This leads to one more approach to the classification of the extremal characters of the infinite-dimensional symplectic and orthogonal groups. Earlier works on this subject are Boyer \cite{Boyer}, Pickrell \cite{Pickrell}, Okounkov--Olshanski \cite{OO-Jack}, and Gorin--Panova \cite{GP}. 

6. An aspect of the present work, which seems to be of interest, is its connection with classical analysis. Such connections already arose in various problems concerning representations of infinite-dimensional groups. Here are some examples:

\begin{itemize}

\item The extremal spherical functions on the motion group of the real Hilbert space are given by the completely monotone functions (Schoenberg \cite{Sch-AnnMath}, the author \cite{Ols-1978}). 

\item Let $U(\infty)=\varinjlim U(N)$ be the direct limit (or simply the union) of the unitary groups $U(N)$  and let $H(\infty)=\varinjlim H(N)$ be the direct limit of the spaces of $N\times N$ Hermitian matrices. The extremal spherical functions on the semidirect product $U(\infty)\ltimes H(\infty)$ are parametrized by the totally positive functions on the real line (Schoenberg  \cite{Sch-JAM}, Pickrell \cite{Pickrell}, the author and Vershik \cite{OV}). 

\item The extremal characters of the infinite symmetric group $S(\infty)=\varinjlim S(n)$ (Thoma \cite{Thoma}, Vershik and Kerov \cite{VK-1981}) are in a one-to-one correspondence with the totally positive infinite triangular Toeplitz matrices, initially studied by Aissen, Schoenberg, and Whitney \cite{ASW}, Edrei \cite{Edrei-JAM}. 

\item Likewise, the extremal characters of the group $U(\infty)$ (Voiculescu \cite{Voiculescu}, Vershik and Kerov \cite{VK-1982}, Boyer \cite{Boyer}) correspond to more general totally positive infinite Toeplitz matrices (Edrei \cite{Edrei-TAMS}). 

\end{itemize}

The link with the B-spline and its discrete version adds one more item to this list. Note that the large-$N$ limit transition for the B-spline, studied by Curry and Schoenberg \cite{CS}, is directly connected with the asymptotic approach to the classification of spherical functions for $U(\infty)\ltimes H(\infty)$. Likewise, a similar asymptotic problem for the discrete B-spline is connected with the classification of characters of $U(\infty)$. 

Next, not so long ago, spline theorists also came up with a $q$-deformation of the B-spline: the first paper on this topic is Simeonov and Goldman \cite{SG}; from subsequent works on this topic, note Budak\c{c}i and Oru\c{c} \cite{Budakci}. As pointed out in the author's paper \cite{Ols2016}, this new version also arises in a representation-theoretic context related to the works Gorin \cite{Gorin}, Petrov \cite{Petrov}, Gorin and the author \cite{GO}. 

In a different direction, note an example of biorthogonal system discussed in section \ref{sect9.2}.

\subsection{Organization of the paper}

The short sections \ref{sect2} and \ref{sect3} contain a preparatory material. Then we proceed to proofs of Theorems \ref{thmA} and \ref{thmB} (section \ref{sect4}) and of Theorem \ref{thmC} (section \ref{sect5}). Sections \ref{sect6} and \ref{sect7} are devoted to simplification of hypergeometric series in the three distinguished cases \eqref{eq1.CBD}. In section \ref{sect8}, we deduce from these results Theorem \ref{thmD}. As a  corollary, we obtain symplectic and orthogonal versions of the discrete B-spline.  The last section \ref{sect9} contains a series of remarks. At the very end is a list of symbols.

\section{Multiparameter and dual Schur functions}\label{sect2}

Here we state a few results from Olshanski \cite[section 4]{Ols-Cauchy} which will be used in the sequel. (As pointed out in \cite{Ols-Cauchy}, these results can also be extracted from the earlier paper Molev \cite{Molev}.)

\begin{definition}[Multiparameter Schur polynomials]\label{def2.A}
Let $(c_0,c_1,c_2,\dots)$ be an infinite sequence of parameters and consider the monic polynomials
\begin{equation*}
(x\mid c_0,c_1,\dots)^m:=(x-c_0)\dots(x-c_{m-1}), \qquad m=0,1,2,\dots\,.
\end{equation*}
 The $N$-variate \emph{multiparameter Schur polynomials} are defined by 
 \begin{equation*}
S_{\mu, N}(x_1,\dots,x_N\mid c_0,c_1,\dots)
:=\frac{\det[(x_i\mid c_0,c_1,\dots)^{\mu_r+N-r}]_{i,r=1}^N}{V(x_1,\dots,x_N)}, \qquad \mu\in\Sign^+_N.
\end{equation*}
This is a particular case of generalized Schur polynomials (see section \ref{sect1.5}). If $c_0=c_1=\dots=0$, they turn into the conventional Schur polynomials. 
\end{definition}

\begin{definition}[Dual Schur functions]\label{def2.B}
We apply the definition of Schur-type functions (section \ref{sect1.5}) by taking
\begin{equation*}
\phi_m(t)=\frac1{(y\mid c_1,c_2,\dots)^m}
\end{equation*}
(note a shift by $1$ in the indexation of the parameters). The corresponding $N$-variate functions are denoted by $\si_{\mu, N}(y_1,\dots,y_N\mid c_1,c_2,\dots)$:
\begin{equation}\label{eq2.B}
\si_{\mu, N}(y_1,\dots,y_N\mid c_1,c_2,\dots)
:=\frac{\det\left[\dfrac1{(y_j\mid c_1,c_2,\dots)^{\mu_r+N-r}}\right]_{j,r=1}^N}
{\det\left[\dfrac1{(y_j\mid c_1,c_2,\dots)^{N-r}}\right]_{j,r=1}^N}.
\end{equation}
Following Molev \cite{Molev} we call them the ($N$-variate) \emph{dual Schur functions}. If $c_1=c_2=\dots=0$, they turn into the conventional Schur polynomials in the variables $y_1^{-1},\dots,y_N^{-1}$. 
\end{definition}

\begin{lemma}\label{lemma1}
The dual Schur functions  \eqref{eq2.B} possess the following stability property
\begin{equation}\label{eq.10}
\si_{\mu, N}(y_1,\dots,y_N\mid c_1,c_2,\dots)\big|_{y_N=\infty}
=\begin{cases} \si_{\mu, N-1}(y_1,\dots,y_{N-1}\mid c_2,c_3,\dots), & \ell(\mu)\le N-1,\\
0, & \ell(\mu)=N. \end{cases}
\end{equation}
\end{lemma}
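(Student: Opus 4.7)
The plan is to pass to the limit $y_N\to\infty$ inside the two determinants of \eqref{eq2.B} by cofactor expansion along their last ($j=N$) rows. The key observation is that, for each nonnegative integer $m$, the factor $1/(y_N\mid c_1,c_2,\dots)^m$ equals $1$ when $m=0$ and is $O(y_N^{-m})$ when $m\ge1$. Hence, in either expansion, only those columns whose last-row exponent equals $0$ contribute to the leading term; all other contributions vanish in the limit.

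For the denominator, the last-row exponents $N-r$ ($r=1,\dots,N$) vanish only at $r=N$, where the entry equals $1$. So the denominator tends to its $(N,N)$-minor. Using the identity $(y\mid c_1,c_2,\dots)^m=(y-c_1)\,(y\mid c_2,c_3,\dots)^{m-1}$, valid for each surviving exponent $N-r\ge1$ with $r\le N-1$, the scalar $\prod_{j=1}^{N-1}(y_j-c_1)^{-1}$ factors out of the resulting $(N-1)\times(N-1)$ determinant, leaving
$$
\det\Bigl[\,1/(y_j\mid c_2,c_3,\dots)^{(N-1)-r}\,\Bigr]_{j,r=1}^{N-1},
$$
which is precisely the denominator of $\sigma_{\mu,N-1}(y_1,\dots,y_{N-1}\mid c_2,c_3,\dots)$.

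For the numerator, the last-row exponents are $\mu_r+N-r$. If $\ell(\mu)\le N-1$, then $\mu_N=0$ and the only vanishing exponent occurs at $r=N$ (with entry $1$); the same factorization yields $\prod_{j=1}^{N-1}(y_j-c_1)^{-1}$ times the numerator of $\sigma_{\mu,N-1}(y_1,\dots,y_{N-1}\mid c_2,c_3,\dots)$. The scalar factors cancel between numerator and denominator, giving the first branch of \eqref{eq.10}. If instead $\ell(\mu)=N$, then $\mu_N\ge1$ and every exponent $\mu_r+N-r$ in the last row is $\ge1$, so every last-row entry of the numerator decays to $0$ (the slowest rate being $y_N^{-\mu_N}$), while the denominator limit computed above is a nonzero rational function of $y_1,\dots,y_{N-1}$. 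Hence the ratio vanishes, giving the second branch.

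The only care required is the bookkeeping of exponents after the factorization: after pulling out $\prod_j(y_j-c_1)^{-1}$, the remaining exponents become $\mu_r+N-r-1=\mu_r+(N-1)-r$ for $r=1,\dots,N-1$, matching the definition of $\sigma_{\mu,N-1}$ with shifted parameter sequence $c_2,c_3,\dots$. This is a routine reindexing, and I do not foresee any genuine obstacle.
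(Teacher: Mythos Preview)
Your argument is correct. The cofactor expansion along the row $j=N$ isolates the unique surviving column ($r=N$) in both determinants when $\mu_N=0$, and the identity $(y\mid c_1,c_2,\dots)^m=(y-c_1)(y\mid c_2,c_3,\dots)^{m-1}$ for $m\ge1$ does exactly the reindexing needed to recognize $\sigma_{\mu,N-1}$ with the shifted parameter string. The case $\ell(\mu)=N$ is handled correctly: every last-row entry of the numerator is $O(y_N^{-1})$ while the denominator has a nonvanishing limit (for generic $y_1,\dots,y_{N-1}$, by Lemma~\ref{lemma2}), so the ratio tends to $0$.

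As for comparison: the paper does not give a self-contained proof of this lemma but simply cites \cite[Lemma~4.5]{Ols-Cauchy}. Your direct determinant computation is precisely the kind of argument one would expect there, so there is nothing materially different to report.
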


\begin{proof}
See \cite[Lemma 4.5]{Ols-Cauchy}.
\end{proof}

\begin{lemma}\label{lemma2}
One has
\begin{equation}\label{eq.8}
\det\left[\dfrac1{(y_j\mid c_1,c_2,\dots)^{N-r}}\right]_{j,r=1}^N  =
(-1)^{N(N-1)/2}\,\frac{V(y_1,\dots,y_N)}{\prod_{j=1}^N(y_j-c_1)\dots(y_j-c_{N-1})}.
\end{equation}
\end{lemma}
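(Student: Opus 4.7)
The plan is a direct computation: reduce the displayed determinant to a Vandermonde determinant by factoring out a common denominator from each row and then performing column operations.

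First I would observe that in row $j$ of the matrix, every entry is a reciprocal of a divisor of $\prod_{s=1}^{N-1}(y_j-c_s)$. Specifically, since $(y_j\mid c_1,c_2,\dots)^{N-r}=(y_j-c_1)\cdots(y_j-c_{N-r})$, clearing denominators row by row yields
\begin{equation*}
\det\!\left[\frac{1}{(y_j\mid c_1,c_2,\dots)^{N-r}}\right]_{j,r=1}^N
=\frac{1}{\prod_{j=1}^N\prod_{s=1}^{N-1}(y_j-c_s)}\,
\det\!\left[\prod_{s=N-r+1}^{N-1}(y_j-c_s)\right]_{j,r=1}^N,
\end{equation*}
with the convention that the empty product (at $r=1$) equals $1$.

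Next I would analyze the determinant on the right. Its $(j,r)$-entry is a monic polynomial in $y_j$ of degree $r-1$. Hence column $r$ takes the form $y_j^{r-1}+(\text{lower-degree terms that are linear combinations of columns }1,\dots,r-1)$. Repeated elementary column operations (which do not change the determinant) therefore transform the matrix into $[y_j^{r-1}]_{j,r=1}^N$. This is the Vandermonde matrix, and with the paper's convention $V(y_1,\dots,y_N)=\prod_{1\le i<j\le N}(y_i-y_j)$ one has
\begin{equation*}
\det[y_j^{r-1}]_{j,r=1}^N=\prod_{1\le j<r\le N}(y_r-y_j)=(-1)^{N(N-1)/2}V(y_1,\dots,y_N).
\end{equation*}
Substituting this into the previous display gives the stated formula.

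There is no serious obstacle here; the only minor point demanding attention is the sign factor $(-1)^{N(N-1)/2}$, which enters precisely because the paper defines $V$ with the opposite ordering convention from the one that appears naturally in the Vandermonde determinant $\det[y_j^{r-1}]$. The column-reduction step is routine once one has noted that the $(j,r)$-entry after clearing denominators is a monic polynomial of degree $r-1$ in $y_j$; no properties of the $c_s$ beyond their being arbitrary scalars are used.
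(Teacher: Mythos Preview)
Your argument is correct. The paper does not supply its own proof of this lemma but simply cites \cite[Lemma 4.6]{Ols-Cauchy}; your direct computation---clearing denominators row by row and then column-reducing the resulting matrix of monic polynomials to the Vandermonde determinant---is the standard way to prove such an identity and is almost certainly what the cited reference does as well.
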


\begin{proof}
See \cite[Lemma 4.6]{Ols-Cauchy}.
\end{proof}

\begin{lemma}\label{lemma3}
The dual Schur functions in $N$ variables form a topological basis in the subalgebra
of $\C[[y_1^{-1},\dots,y_N^{-1}]]$ formed by the symmetric power series.
\end{lemma}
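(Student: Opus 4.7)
The plan is to change variables via $z_j := y_j^{-1}$, identifying the subalgebra of symmetric power series in $y_1^{-1},\ldots,y_N^{-1}$ with the ring $\C[[z_1,\ldots,z_N]]^{\text{sym}}$ of symmetric formal power series in the $z_j$'s. Under this substitution, each building block becomes
$$
\phi_m(1/z) = \frac{z^m}{\prod_{i=1}^m(1-c_iz)} = z^m + O(z^{m+1}),
$$
a formal power series in $z$ whose lowest-order term is $z^m$ (with $\phi_0\equiv 1$).

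First, I would establish that $\sigma_{\mu,N}$ defines an element of $\C[[z_1,\ldots,z_N]]^{\text{sym}}$. Both determinants in \eqref{eq2.B} become, after substitution, elements of $\C[[z_1,\ldots,z_N]]$ alternating in the $z_j$'s (swapping two variables swaps two rows). By the factor theorem for formal power series, each is divisible by $z_i-z_j$ for every $i<j$; since these linear forms are pairwise coprime in the UFD $\C[[z_1,\ldots,z_N]]$, their product $V(z_1,\ldots,z_N) = \prod_{i<j}(z_i-z_j)$ divides both determinants. Write numerator $= V(z)A_\mu(z)$ and denominator $= V(z)B(z)$ with $A_\mu,B\in\C[[z]]^{\text{sym}}$. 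Using $\phi_m(1/z_j) = z_j^m + O(z_j^{m+1})$ together with the classical bialternant formula $\det[z_j^{\mu_r+N-r}] = s_\mu(z)\cdot\det[z_j^{N-r}]$, the leading-total-degree parts give
$$
A_\mu(z) = s_\mu(z) + \bigl(\text{total degree}>|\mu|\bigr),\qquad B(z) = 1 + \bigl(\text{positive degree}\bigr).
$$
Since $B$ has constant term $1$, it is invertible in $\C[[z]]^{\text{sym}}$, so $\sigma_{\mu,N} = A_\mu/B$ lies in $\C[[z]]^{\text{sym}}$ and admits the expansion
$$
\sigma_{\mu,N} = s_\mu(z_1,\ldots,z_N) + \sum_{\lambda\in\Sign^+_N,\,|\lambda|>|\mu|}\alpha_{\mu\lambda}\,s_\lambda(z_1,\ldots,z_N).
$$

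Second, I would invoke the fact that the Schur polynomials $\{s_\mu(z):\mu\in\Sign^+_N\}$ form an algebraic basis of $\C[z_1,\ldots,z_N]^{\text{sym}}$ and hence, upon completion with respect to the total-degree filtration, a topological basis of $\C[[z_1,\ldots,z_N]]^{\text{sym}}$. The above expansion is a unitriangular change of basis with respect to this filtration, with only finitely many $\mu$ contributing to any fixed degree. Such a triangular change can be inverted term by term, expressing each $s_\mu$ as a topologically convergent series in the $\sigma_{\lambda,N}$'s. Composing the two expansions then yields a unique topologically convergent expansion of every symmetric power series in the $\sigma_{\mu,N}$'s, which is precisely the topological basis property.

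The main obstacle is the first step, namely justifying the clean division by $V(z)$ inside $\C[[z_1,\ldots,z_N]]$. This rests on the UFD structure of the formal power series ring combined with the factor theorem (any power series vanishing on $z_i=z_j$ is divisible by $z_i-z_j$), which together allow one to divide an alternating series by the Vandermonde and obtain a genuine symmetric quotient in $\C[[z]]^{\text{sym}}$.
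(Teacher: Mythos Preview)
Your argument is correct. The paper itself gives no proof here, only a citation to \cite[Lemma 4.7]{Ols-Cauchy}, so a direct comparison is impossible from the text alone; however, your approach---reducing to the variables $z_j=y_j^{-1}$, extracting the leading homogeneous part $\sigma_{\mu,N}=s_\mu(z)+(\text{higher total degree})$, and inverting the resulting unitriangular change of basis relative to the Schur topological basis of $\C[[z_1,\dots,z_N]]^{\mathrm{sym}}$---is the standard one and almost certainly coincides with what is done in the cited reference.

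One small remark: the ``factor theorem'' step (an alternating power series is divisible by $V(z)$) is most cleanly justified by decomposing into homogeneous components. Each homogeneous component $f_d$ is an alternating \emph{polynomial}, hence divisible by $V(z)$ in $\C[z_1,\dots,z_N]$; writing $f_d=V(z)g_d$ with $\deg g_d=d-\binom{N}{2}$, the sum $\sum_d g_d$ is a well-defined element of $\C[[z_1,\dots,z_N]]^{\mathrm{sym}}$. This avoids any delicate appeal to UFD properties or substitutions in the formal power series ring. With that clarification, your proof is complete and self-contained where the paper offers only a pointer.
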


\begin{proof}
See \cite[Lemma 4.7]{Ols-Cauchy}.
\end{proof}

\begin{proposition}[Cauchy-type identity]\label{prop2.A}
For $K\le N$ one has
\begin{multline}\label{eq.16}
\sum_{\mu\in\Sign^+_K}S_{\mu, N}(x_1,\dots,x_N\mid c_0,c_1,\dots)
\si_{\mu, K}(y_1,\dots,y_K\mid c_{N-K+1},c_{N-K+2},\dots)
\\
=\prod_{j=1}^K\frac{(y_j-c_0)\dots(y_j-c_{N-1})}{(y_j-x_1)\dots(y_j-x_N)},
\end{multline}
where both sides are regarded as elements of the algebra of formal series in $y_1^{-1},\dots, y_k^{-1}$. 
\end{proposition}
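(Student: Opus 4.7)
The plan is to establish the identity first when $K=N$, and then to deduce the general case $K<N$ by iteratively sending the extra $y$-variables to infinity.

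For $K=N$, I would write each summand on the left in bi-determinantal form and substitute $\lambda_r:=\mu_r+N-r$, which bijects $\Sign^+_N$ with strictly decreasing tuples $\lambda_1>\cdots>\lambda_N\ge 0$. The Cauchy--Binet formula then reshapes $\sum_\mu S_{\mu,N}\,\sigma_{\mu,N}$ into
$$
\frac{1}{V(x)\,D(y)}\,\det\!\left[\sum_{m=0}^{\infty}\frac{(x_i\mid c_0,c_1,\dots)^m}{(y_j\mid c_1,c_2,\dots)^m}\right]_{i,j=1}^N,
$$
where $D(y):=\det\bigl[(y_j\mid c_1,c_2,\dots)^{-(N-r)}\bigr]_{j,r}$. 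I would evaluate the scalar kernel by telescoping in $\C[[y^{-1}]]$: with $\theta_m(z):=(z\mid c_0,c_1,\dots)^m$, the relation $\theta_{m+1}(z)=(z-c_m)\theta_m(z)$ together with $y-x=(y-c_m)-(x-c_m)$ gives
$$
(y-x)\,\frac{\theta_m(x)}{\theta_{m+1}(y)}=\frac{\theta_m(x)}{\theta_m(y)}-\frac{\theta_{m+1}(x)}{\theta_{m+1}(y)},
$$
so $\sum_m \theta_m(x)/\theta_{m+1}(y)=1/(y-x)$; since $\theta_{m+1}(y)=(y-c_0)\,(y\mid c_1,c_2,\dots)^m$, the scalar sum equals $(y-c_0)/(y-x)$. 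The $N\times N$ determinant therefore factors as $\prod_j(y_j-c_0)\cdot\det[1/(y_j-x_i)]$; the Cauchy determinant formula evaluates the latter, and combined with Lemma~\ref{lemma2} applied to $D(y)$ the two Vandermondes and the sign cancel, yielding the required right-hand side.

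For $K<N$, I would start from the identity just proved and specialize $y_m^{-1}=0$ in succession for $m=N,N-1,\dots,K+1$ inside $\C[[y_1^{-1},\dots,y_N^{-1}]]$. Each discarded right-hand factor $(y_m-c_0)\cdots(y_m-c_{N-1})/\prod_i (y_m-x_i)$ formally tends to $1$, so the product on the right simply loses its $m$-th factor. On the left, Lemma~\ref{lemma1} tells me that each such specialization (i) annihilates the summands with $\ell(\mu)$ equal to the current number of remaining $y$-variables, and (ii) converts $\sigma_{\mu,n}(\,\cdot\mid c_r,c_{r+1},\dots)$ into $\sigma_{\mu,n-1}(\,\cdot\mid c_{r+1},c_{r+2},\dots)$. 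After $N-K$ iterations the sum is restricted to $\mu\in\Sign^+_K$ and the dual Schur factor has become $\sigma_{\mu,K}(y_1,\dots,y_K\mid c_{N-K+1},c_{N-K+2},\dots)$, which is precisely the statement to be proved.

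The only nontrivial computation is the scalar telescoping identity; once it is in hand, the rest is a standard Cauchy--Binet plus Cauchy-determinant manipulation, and the iterated limit in the second step is legitimate in the formal-series sense thanks to Lemma~\ref{lemma3}. The main obstacle, besides finding the correct telescoping, is verifying that the parameter shifts match precisely: $N-K$ successive applications of Lemma~\ref{lemma1} must, and indeed do, advance the starting $c$-index in $\sigma$ from $1$ to $N-K+1$.
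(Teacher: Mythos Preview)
Your argument is correct. The paper itself does not give a proof of this proposition; it simply cites \cite[Proposition 4.8]{Ols-Cauchy}. Your proof is self-contained and follows what is in fact the standard route to such identities: evaluate the $K=N$ case by Cauchy--Binet together with the telescoping kernel identity
\[
\sum_{m\ge0}\frac{(x\mid c_0,c_1,\dots)^m}{(y\mid c_1,c_2,\dots)^m}=\frac{y-c_0}{y-x},
\]
then reduce to $K<N$ by repeatedly sending the last $y$-variable to infinity and invoking the stability Lemma~\ref{lemma1}. The parameter bookkeeping is right: each application of Lemma~\ref{lemma1} shifts the starting index in the $c$-sequence by one, so after $N-K$ steps the dual Schur factor carries parameters $c_{N-K+1},c_{N-K+2},\dots$ as required, and the right-hand side factors $\prod_i(y_m-x_i)^{-1}\prod_{l=0}^{N-1}(y_m-c_l)$ indeed tend to $1$ in $\C[[y_m^{-1}]]$ since numerator and denominator are monic of the same degree. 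One small stylistic point: your appeal to Lemma~\ref{lemma3} for the legitimacy of the limit is not really needed---setting $y_m^{-1}=0$ is a well-defined algebra homomorphism on formal power series, and the left-hand side is a locally finite sum in the $y^{-1}$-adic sense, so term-by-term specialization is automatic.
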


\begin{proof}
See \cite[Proposition 4.8]{Ols-Cauchy}.
\end{proof}

\section{Coherency property for special multiparameter Schur polynomials}\label{sect3}

In the next proposition we use the normalized Jacobi polynomials $\wt P^{(a,b)}_{\nu,N}$ defined in \eqref{eq1.K}, the mutiparameter Schur polynomials (Definition \ref{def2.A}) corresponding to the special sequence of parameters $(\epsi^2,(\epsi+1)^2, (\epsi+2)^2,\dots)$, and the conventional Schur polynomials $S_{\mu,N}$. Recall that $\epsi=(a+b+1)/2$. 

Given $\nu\in\Sign^+_N$, we set
\begin{equation}\label{eq3.B}
n_i:=\nu_i+N-i, \qquad 1\le i\le N.
\end{equation}

\begin{proposition}[Binomial formula for Jacobi polynomials]\label{prop3.A}
Let $\nu\in\Sign^+_N$. One has
\begin{multline}\label{eq3.C}
\wt P^{(a,b)}_{\nu, N}(1+\al_1,\dots,1+\al_N)\\
=\sum_{\mu\in\Sign^+_N}\frac{S_{\mu, N}((n_1+\epsi)^2,\dots,(n_N+\epsi)^2\mid \epsi^2, (\epsi+1)^2,
\dots)}{C(N,\mu; a)}S_{\mu, N}(\al_1,\dots,\al_N),
\end{multline}
where 
\begin{equation}\label{eq3.D}
C(N,\mu;a)= 2^{|\mu|}\, \prod_{i=1}^N
\frac{\Gamma(\mu_i+N-i+1)\Gamma(\mu_i+N-i+a+1)} {\Gamma(N-i+1)\Gamma(N-i+a+1)}
\end{equation}
\end{proposition}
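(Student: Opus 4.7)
The plan is to reduce the multivariate identity to the classical univariate binomial theorem for Jacobi polynomials via a Cauchy-Binet expansion of the bialternant defining $P^{(a,b)}_{\nu,N}$.

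First, I would record the univariate binomial expansion. From the hypergeometric representation $\wt P^{(a,b)}_n(x)={}_2F_1(-n,n+2\epsi;a+1;(1-x)/2)$, the substitution $x=1+\al$ together with the elementary identity
\[
(-1)^k(-n)_k(n+2\epsi)_k=\prod_{j=0}^{k-1}\bigl[(n+\epsi)^2-(\epsi+j)^2\bigr]=((n+\epsi)^2\mid\epsi^2,(\epsi+1)^2,\dots)^k
\]
yields
\[
\wt P^{(a,b)}_n(1+\al)=\sum_{k\ge 0}\frac{((n+\epsi)^2\mid\epsi^2,(\epsi+1)^2,\dots)^k}{2^k\,(a+1)_k\,k!}\,\al^k.
\]
This is precisely the $N=1$ case of \eqref{eq3.C} (with $\mu=(k)$); multiplying by $P^{(a,b)}_n(1)=(a+1)_n/n!$ gives the corresponding unnormalized expansion $P^{(a,b)}_n(1+\al)=P^{(a,b)}_n(1)\sum_k u_k(n)\,\al^k$.

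Next, writing $x_i:=(n_i+\epsi)^2$, I would substitute this expansion into each entry of $\det_{i,j}[P^{(a,b)}_{n_i}(1+\al_j)]$ and factor $\prod_i P^{(a,b)}_{n_i}(1)$ out of the rows. Applying the Cauchy-Binet formula to the resulting product of an $N\times\infty$ and an $\infty\times N$ matrix, and parametrizing the strictly decreasing summation indices by $k_r=\mu_r+N-r$ with $\mu\in\Sign^+_N$, I pull the $r$-dependent scalars $[2^{\mu_r+N-r}(a+1)_{\mu_r+N-r}(\mu_r+N-r)!]^{-1}$ out of each column. The two remaining determinants are recognized, respectively, as $V(x_1,\dots,x_N)\cdot S_{\mu,N}(x_1,\dots,x_N\mid\epsi^2,(\epsi+1)^2,\dots)$ by Definition \ref{def2.A}, and $V(\al_1,\dots,\al_N)\cdot S_{\mu,N}(\al_1,\dots,\al_N)$ by the classical bialternant formula. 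Dividing by $V(1+\al_1,\dots,1+\al_N)=V(\al_1,\dots,\al_N)$ produces an expansion
\[
P^{(a,b)}_{\nu,N}(1+\al_1,\dots,1+\al_N)=\prod_i P^{(a,b)}_{n_i}(1)\cdot V(x_1,\dots,x_N)\sum_{\mu\in\Sign^+_N}A_\mu\,S_{\mu,N}(x\mid\ldots)\,S_{\mu,N}(\al_1,\dots,\al_N),
\]
with $A_\mu=\prod_{r=1}^N[2^{\mu_r+N-r}(a+1)_{\mu_r+N-r}(\mu_r+N-r)!]^{-1}$.

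Finally, setting $\al=0$ and using $S_{\mu,N}(0,\dots,0)=\de_{\mu,\emptyset}$ isolates the $\mu=\emptyset$ term and gives an explicit product formula for $P^{(a,b)}_{\nu,N}(1,\dots,1)$. Forming the ratio to pass to $\wt P^{(a,b)}_{\nu,N}$ cancels the common factor $\prod_i P^{(a,b)}_{n_i}(1)\cdot V(x)$ and reduces the sought coefficient to $A_\mu/A_\emptyset$. A short simplification using $(a+1)_m=\Ga(m+a+1)/\Ga(a+1)$ and $m!=\Ga(m+1)$ identifies this ratio with $1/C(N,\mu;a)$, matching \eqref{eq3.D}.

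The only delicate point is the bookkeeping of signs in the Cauchy-Binet step: the strictly decreasing indices $k_1>\dots>k_N$ must be reordered into the strictly increasing order presumed by the standard statement of Cauchy-Binet, and the two $(-1)^{N(N-1)/2}$ factors arising from the corresponding column permutations in the $x$- and $\al$-determinants must be seen to cancel. Once this is checked, the argument is entirely algebraic: the multivariate formula is a direct consequence of the univariate Jacobi binomial theorem together with the bialternant structure common to $P^{(a,b)}_{\nu,N}$ and the multiparameter Schur polynomials.
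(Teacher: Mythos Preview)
Your argument is correct. The paper does not give a proof of this proposition at all; it simply cites Okounkov--Olshanski \cite[Theorem 1.2]{OO-KirSem} and \cite[Proposition 7.4]{OO-Jacobi}. Your approach --- the univariate hypergeometric expansion of $\wt P^{(a,b)}_n$ combined with a Cauchy--Binet expansion of the bialternant, followed by normalization via the $\mu=\emptyset$ term --- is the standard route to such binomial formulas and is essentially how the cited references proceed. The identity $(-1)^k(-n)_k(n+2\epsi)_k=((n+\epsi)^2\mid\epsi^2,(\epsi+1)^2,\dots)^k$ that you use is exactly formula \eqref{4.3.8} of the present paper, and the verification $A_\mu/A_\emptyset=1/C(N,\mu;a)$ is a straightforward Gamma-function computation. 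The sign bookkeeping you flag is genuine but harmless: the two $(-1)^{N(N-1)/2}$ factors from reordering columns in the $x$- and $\al$-determinants do cancel, and one can also sidestep the issue entirely by summing over unordered $N$-subsets in Cauchy--Binet.
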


\begin{proof}
See Okounkov--Olshanski \cite[Theorem 1.2]{OO-KirSem} and \cite[Proposition 7.4]{OO-Jacobi}.
\end{proof}

Let $\nu\in\Sign^+_N$ and $\ka\in\Sign^+_K$, where $N>K\ge1$. Recall that the quantities $\La^N_K(\nu,\ka)$ are the coefficients in the expansion 
\begin{equation}\label{eq3.A}
\wt P^{(a,b)}_{\nu, N}(x_1,\dots,x_K,1,\dots,1)
=\sum_{\ka\in\Sign^+_K}\La^N_K(\nu,\ka)
\wt P^{(a,b)}_{\ka, K}(x_1,\dots,x_K).
\end{equation}
Next, let $\mu\in\Sign^+_K$ and regard it also as an element of $\Sign^+_N$ by adjusting $N-K$ zeroes.  By analogy with \eqref{eq3.B} we also set 
$$
k_i:=\ka_i+K-i, \quad 1\le i\le K.
$$ 

\begin{theorem}[The coherency property]\label{thm3.A}
In this notation, the following relation holds
\begin{multline}\label{eq3.F}
\frac{S_{\mu, N}((n_1+\epsi)^2,\dots,(n_N+\epsi)^2\mid\epsi^2, (\epsi+1)^2,
\dots)}{C(N,\mu;a)}\\
=\sum_{\ka\in\Sign^+_K}
\La^N_K(\nu,\ka)\frac{S_{\mu, K}((k_1+\epsi)^2,\dots,(k_K+\epsi)^2\mid\epsi^2,
(\epsi+1)^2,\dots)}{C(K,\mu;a)}.
\end{multline}
\end{theorem}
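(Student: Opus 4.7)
The plan is to apply the binomial formula of Proposition~\ref{prop3.A} to both sides of the defining identity \eqref{eq3.A} for $\La^N_K(\nu,\ka)$, after the substitution $x_j=1+\al_j$, and then compare coefficients in the basis of ordinary $K$-variate Schur polynomials.

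First I would handle the LHS of \eqref{eq3.A}, which is $\wt P^{(a,b)}_{\nu,N}(1+\al_1,\dots,1+\al_K,1,\dots,1)$. The plan is to start from \eqref{eq3.C} applied to $\nu\in\Sign^+_N$ with all $N$ variables $1+\al_1,\dots,1+\al_N$, and then set $\al_{K+1}=\dots=\al_N=0$. The key ingredient is the classical stability property of ordinary Schur polynomials: $S_{\mu,N}(\al_1,\dots,\al_K,0,\dots,0)$ equals $S_{\mu,K}(\al_1,\dots,\al_K)$ when $\ell(\mu)\le K$ and vanishes when $\ell(\mu)>K$. This collapses the sum in \eqref{eq3.C} to a sum over $\mu\in\Sign^+_K$, so the LHS of \eqref{eq3.A} becomes
$$
\sum_{\mu\in\Sign^+_K}\frac{S_{\mu,N}((n_1+\epsi)^2,\dots,(n_N+\epsi)^2\mid\epsi^2,(\epsi+1)^2,\dots)}{C(N,\mu;a)}\,S_{\mu,K}(\al_1,\dots,\al_K).
$$

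Next I would apply Proposition~\ref{prop3.A} in its $K$-variable form to each factor $\wt P^{(a,b)}_{\ka,K}(1+\al_1,\dots,1+\al_K)$ occurring on the RHS of \eqref{eq3.A}. Substituting and interchanging the order of summation (legitimate because the sum over $\ka$ in \eqref{eq3.A} is finite, as $\wt P^{(a,b)}_{\nu,N}(x_1,\dots,x_K,1,\dots,1)$ is a symmetric polynomial of bounded degree in the $x_j$, and each binomial expansion terminates) rewrites the RHS of \eqref{eq3.A} as
$$
\sum_{\mu\in\Sign^+_K}\left(\sum_{\ka\in\Sign^+_K}\La^N_K(\nu,\ka)\,\frac{S_{\mu,K}((k_1+\epsi)^2,\dots,(k_K+\epsi)^2\mid\epsi^2,(\epsi+1)^2,\dots)}{C(K,\mu;a)}\right)S_{\mu,K}(\al_1,\dots,\al_K).
$$

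To conclude, I would equate the two expressions and invoke the fact that $\{S_{\mu,K}(\al_1,\dots,\al_K):\mu\in\Sign^+_K\}$ is a linear basis of the algebra of $K$-variate symmetric polynomials. Matching the coefficients of $S_{\mu,K}(\al_1,\dots,\al_K)$ on both sides then yields \eqref{eq3.F}.

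The argument is essentially routine once Proposition~\ref{prop3.A} is available; the whole theorem can be read as saying that the binomial-formula coefficients behave covariantly under the restriction kernel $\La^N_K$. The only delicate point is the vanishing $S_{\mu,N}(\al_1,\dots,\al_K,0,\dots,0)=0$ when $\ell(\mu)>K$, which explains why the LHS of \eqref{eq3.F} automatically sees only those $\mu$ of length $\le K$; this is a standard consequence of the Jacobi--Trudi determinantal expression. I do not anticipate a serious obstacle beyond bookkeeping.
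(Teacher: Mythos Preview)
Your proposal is correct and follows essentially the same route as the paper's own proof: apply the binomial formula \eqref{eq3.C} to both sides of \eqref{eq3.A} after the substitution $x_j=1+\al_j$ (specializing $\al_{K+1}=\dots=\al_N=0$ on the left via the stability of ordinary Schur polynomials), then equate coefficients in the basis $\{S_{\mu,K}\}$. The paper's argument is identical in substance, though it leaves the stability step implicit.
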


\noindent\emph{Comments}. 1. The sum on the right-hand side is finite, because for any $\nu$,
there are only finitely many $\ka$'s for which $\La^N_K(\nu,\ka)\ne0$. Indeed, a necessary condition for $\La^N_K(\nu,\ka)\ne0$ is $\ka_1\le\nu_1$, as it is seen from the branching rule for multivariate Jacobi polynomials (\cite[Proposition 7.5]{OO-Jacobi}).

2. A similar relation holds in the case of type $\Aa$, see \cite[(5.6)]{BO-AdvMath} and \cite[(10.30)]{OO-AA}. 

3. Let $\nu\in\Sign^+_N$ be fixed and $\mu$ range over $\Sign^+_K$. Then $\La^N_K(\nu,\ccdot)$ is a unique finitely supported solution to the system of linear equations produced by the coherency relations \eqref{eq3.F}. This follows from the fact that the multiparameter Schur polynomials on the right-hand side of \eqref{eq3.F} form a basis of the algebra of symmetric $K$-variate polynomials, and this algebra separates the $K$-point configurations of the form
$$
(x_1,\dots,x_k)=((k_1+\epsi)^2,\dots,(k_K+\epsi)^2)
$$
corresponding to the signatures $\ka\in\Sign^+_K$.  

\begin{proof}
We apply the binomial formula \eqref{eq3.C} and the definition \eqref{eq3.A}. Make the change
$N\to K$ and $\nu\to \ka$; then the equation \eqref{eq3.C} turns into
\begin{multline*}
\wt P^{(a,b)}_{\ka, K}(1+\al_1,\dots,1+\al_K)\\
=\sum_{\mu\in\Sign^+_K}\frac{S_{\mu, K}((k_1+\epsi)^2,\dots,(k_K+\epsi)^2|\epsi^2,(\epsi+1)^2,\dots)}
{C(K,\mu;a)}S_{\mu, K}(\al_1,\dots,\al_K).
\end{multline*}

Substituting this into \eqref{eq3.A} and interchanging summation gives

\begin{multline}\label{eq.3}
\wt P^{(a,b)}_{\nu, N}(1+\al_1,\dots,1+\al_K,1,\dots,1) \\
=\sum_{\mu\in\Sign^+_K}\left(\sum_{\ka\in\Sign^+_K}
\La^N_K(\nu,\ka)\frac{S_{\mu, K}((k_1+\epsi)^2,\dots,(k_K+\epsi)^2|\epsi^2,
(\epsi+1)^2,\dots)}{C(K,\mu;a)}\right)\\
\times S_{\mu, K}(\al_1,\dots,\al_K).
\end{multline}

On the other hand, specializing $\al_{K+1}=\dots=\al_N=0$ in the binomial
formula \eqref{eq3.C} gives

\begin{multline}\label{eq.4}
\wt P^{(a,b)}_{\nu, N}(1+\al_1,\dots,1+\al_K,
1,\dots,1)\\
=\sum_{\mu\in\L_K}\frac{S_{\mu, N}((n_1+\epsi)^2,\dots,(n_N+\epsi)^2|\epsi^2,(\epsi+1)^2,\dots)}
{C(N,\mu;a)}S_{\mu, K}(\al_1,\dots,\al_K).
\end{multline}

Comparing \eqref{eq.3} with \eqref{eq.4} and equating the coefficients of the
Schur polynomials $S_{\mu, K}(\al_1,\dots,\al_K)$ we are led to the desired
formula \eqref{eq3.F}.
\end{proof}

\section{Cauchy-type identity and determinantal formula for the matrix entries $\La^N_K(\nu,\ka)$: proof of Theorems \ref{thmA} and \ref{thmB}}\label{sect4}

In this section we fix the Jacobi parameters $(a,b)$ such that $a>-1$, $b>-1$, and $a+b\ge-1$, so that the parameter $\epsi:=\frac12(a+b+1)$ is nonnegative. We are dealing with  the functions $g_k(t)$, the Schur-type 
functions $G_\ka(t_1,\dots,t_K)$, the characteristic function $F_N(t;\nu;\epsi)$ of a signature $\nu\in\Sign^+_N$, and the space $\FF(\epsi,L)$ related to the grid $\AAA(\epsi,L)$ consisting of the points $A_m:=L+\epsi+m-1$, where $m=1,2,\dots$\,. All these objects were defined in sections \ref{sect1.7}--\ref{sect1.8}. We assume that $N>K$ and $L=N-K+1$, so that $L\ge2$. 

\begin{lemma}\label{lemma4.A}
For any $\nu\in\Sign^+_N$, the function $\prod_{j=1}^KF_N(t_j;\nu;\epsi)$ can be written, in a unique way, as  a finite linear combination of the functions $G_\ka(t_1,\dots,t_K)$, where $\ka$ ranges over\/ $\Sign^+_K$. 
\end{lemma}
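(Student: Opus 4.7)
The plan is to expand $F_N(\cdot;\nu;\epsi)$ itself as a \emph{finite} linear combination of the $g_k$'s, and then pass to the multivariate level via a standard Jacobi--Trudi-style manipulation.

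First I would verify that $F_N(\cdot;\nu;\epsi)\in\FF(\epsi,L)$. Setting $n_i:=\nu_i+N-i$, note that the numerator of $F_N$ has simple zeros at $\pm(a+\epsi)$ for $a\in\{0,1,\dots,N-1\}$, and these cancel every denominator pole at $\pm(n_i+\epsi)$ with $n_i\le N-1$. The surviving poles sit at those $n_i+\epsi$ with $n_i\ge N$ (equivalently $\nu_i\ge i$); since $A_K=L+\epsi+K-1=N+\epsi$, every survivor lies in $\pm\{A_K,A_{K+1},\dots\}\subset\pm\AAA(\epsi,L)$. Combined with $F_N(\infty)=1$ and evenness in $t$, this gives $F_N\in\FF(\epsi,L)$. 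The basis property of $\{g_k\}$ in $\FF(\epsi,L)$ (asserted earlier in the paper) then yields a unique expansion; moreover, since $g_k$ has poles only at $A_1,\dots,A_k$, the transition matrix from $\{e_m\}$ to $\{g_k\}$ is triangular in the pole index, so the expansion of the rational function $F_N$ terminates:
\[
F_N(t;\nu;\epsi)=\sum_{k=0}^{M}c_k(\nu)\,g_k(t),\qquad M=\nu_1+K-1.
\]

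Substituting this into $\prod_{j=1}^KF_N(t_j;\nu;\epsi)$ produces a finite sum of tensor products $g_{k_1}(t_1)\cdots g_{k_K}(t_K)$ with each $k_j\le M$. To re-index in the Schur-type basis, I would multiply both sides by the generalized Vandermonde $V_g(t_1,\dots,t_K):=\det[g_{K-i}(t_j)]_{i,j=1}^K$ and anti-symmetrize in $(t_1,\dots,t_K)$: the tensor products with coinciding indices drop out, and the surviving terms combine into determinants $\det[g_{\la_i}(t_j)]$ indexed by strictly decreasing tuples $\la_1>\dots>\la_K\ge 0$. Writing $\la_i=\ka_i+K-i$ with $\ka\in\Sign^+_K$ and dividing back by $V_g$ gives the desired finite expansion $\prod_j F_N(t_j;\nu;\epsi)=\sum_\ka\alpha_\ka(\nu)\,G_{\ka,K}(t_1,\dots,t_K)$. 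Uniqueness follows from the linear independence of the $G_{\ka,K}$'s: a nontrivial relation among them would, after multiplication by $V_g$, produce a nontrivial linear relation among the alternating determinants $\det[g_{\ka_i+K-i}(t_j)]$, contradicting linear independence of the $g_k$'s.

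The only delicate point is the pole bookkeeping in the first step: the numerical coincidence $A_K=N+\epsi$, which rests on the relation $L=N-K+1$, is precisely what forces every surviving pole of $F_N$ into $\pm\AAA(\epsi,L)$ and hence places $F_N$ in $\FF(\epsi,L)$. Everything else is a direct consequence of the basis property of $\{g_k\}$ combined with the usual Schur/Jacobi--Trudi-style algebra passing between symmetric functions and alternating determinants.
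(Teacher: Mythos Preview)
Your outline is close to the paper's argument, but the ``multiply by $V_g$ and anti-symmetrize'' step has a genuine gap. The space $\FF(\epsi,L)$ is \emph{not} closed under pointwise multiplication (products acquire double poles), so neither is $\FF(\epsi,L)^{\otimes K}$; hence multiplying a symmetric element of $\FF(\epsi,L)^{\otimes K}$ by $V_g$ need not land in $\wedge^K\FF(\epsi,L)$, and there is no automatic expansion in Slater determinants $\det[g_{\la_i}(t_j)]$. Your term-by-term route (expand $F_N=\sum c_kg_k$ first, then multiply by $V_g$) makes this visible: an individual term $V_g\cdot g_{k_1}(t_1)\cdots g_{k_K}(t_K)$ has double poles whenever some $k_j\le K-1$, so it is not a finite combination of the $\det[g_{\la_i}(t_j)]$'s. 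The ``anti-symmetrize'' you invoke is either redundant (the product $V_g\cdot\prod_jF_N(t_j)$ is already alternating) or, if applied to the tensor factors without $V_g$, annihilates the symmetric left-hand side.

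The repair is exactly what the paper does in its Step~3, and you already have the ingredients. You correctly observed that all surviving poles of $F_N$ lie at $\pm A_m$ with $m\ge K$. The missing sentence is that each $g_{K-j}$ (for $1\le j\le K$) has poles only at $\pm A_m$ with $m\le K-1$; these two pole sets are disjoint, so each product $g_{K-j}(t)F_N(t)$ has only \emph{simple} poles and hence lies in $\FF(\epsi,L)$. One then writes
\[
V_g\prod_{j=1}^K F_N(t_j)=\det\bigl[g_{K-i}(t_j)F_N(t_j)\bigr]_{i,j=1}^K
\]
and expands each row in the basis $\{g_k\}$, obtaining by multilinearity a finite combination of $\det[g_{k_i}(t_j)]$'s. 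Dividing back by $V_g$ gives the desired expansion in the $G_{\ka,K}$'s; uniqueness follows as you said. So the approach is essentially the paper's, once the disjointness of the pole sets is made explicit and the detour through the $g_k$-expansion of $F_N$ is dropped in favour of expanding $g_{K-j}F_N$ directly.
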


\begin{proof}
\emph{Step} 1. From the definition \eqref{eq1.g_k} of the functions $g_k(t)$ it is seen that they are even, rational, and regular at $t=\infty$. The function $g_0(t)$ is the constant $1$. If $k\ge1$, then the singularities of $g_k(t)$ are simple poles contained in the set $\{\pm A_1,\dots, \pm A_k\}$ and, moreover, the residue at $\pm A_k$ is nonzero. It follows that the functions $g_k(t)$ form a basis of $\FF(\epsi,L)$.

\emph{Step} 2. 
The claim of the lemma is obviously equivalent to the following: there exists a unique finite expansion of the form 
\begin{equation}\label{eq4.exp}
\det[g_{K-i}(t_j)]_{i,j=1}^K\prod_{j=1}^KF_N(t_j;\nu;\epsi)
=\sum_{k_1>\dots>k_K\ge0}(\cdots)\det[g_{k_i}(t_j)]_{i,j=1}^K,
\end{equation}
where the dots stand for some coefficients. 

Write the left-hand side as
$$
\det[g_{K-i}(t_j)F_N(t_j;\nu;\epsi)]_{i,j=1}^K.
$$
By virtue of step 1, the existence and uniqueness of the expansion \eqref{eq4.exp} is  reduced to the following claim concerning functions of a single variable $t$: for each $m=0,\dots, K-1$, the function 
$ g_m(t) F_N(t;\nu;\epsi)$ lies in the space $\FF(\epsi,L)$.

\emph{Step} 3. 
Let us prove the latter claim. It is clear that $ g_m(t) F_N(t;\nu;\epsi)$ is even, rational, and regular at infinity. It remains to examine its singularities. From the definition \eqref{eq1.F_N} of $F_N(t;\nu;\epsi)$ it follows that its singularities are simple poles contained in the set 
$$
\{\pm (N+\epsi), \pm(N+\epsi+1), \pm(N+\epsi+2), \dots\},
$$
while the singularities of $g_m(t)$ with $m\ne0$ are simple poles contained in the set 
$$
\{\pm (L+\epsi), \pm(L+\epsi+1),\dots, \pm(L+\epsi+m-1)\}.
$$
Since $m\le K-1$ and $L=N-K+1$, the both sets are disjoint. Furthermore, they are contained in $-(\AAA(\epsi,L)\cup\AAA(\epsi,L)$. Thus, the product $ g_m(t) F_N(t;\nu;\epsi)$ has only simple poles, all of which are contained in $-(\AAA(\epsi,L)\cup\AAA(\epsi,L)$. This proves that $ g_m(t) F_N(t;\nu;\epsi)$ lies in the space $\FF(\epsi,L)$.  
\end{proof}

For $\ka\in\Sign^+_K$ we set
\begin{equation}\label{eq4.dkappa}
d_K(\ka;\epsi):=\prod_{1\le i<j\le K}\frac{(k_i+\epsi)^2-(k_j+\epsi)^2}{(k^0_i+\epsi)^2-(k^0_j+\epsi)^2}=\prod_{1\le i<j\le K}\frac{(\ka_i+K-i+\epsi)^2-(\ka_j+K-j+\epsi)^2}{(K-i+\epsi)^2-(K-j+\epsi)^2}.
\end{equation}

\begin{theorem}\label{thm4.A}
The precise form of the expansion in Lemma \ref{lemma4.A} is 
\begin{equation}\label{MasterEq}
\prod_{j=1}^KF_N(t_j;\nu;\epsi)=\sum_{\ka\in\Sign^+_K}\frac{\La^N_K(\nu,\ka)}{d_K(\ka;\epsi)}\,G_\ka(t_1,\dots,t_K).
\end{equation}
\end{theorem}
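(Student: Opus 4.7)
The plan is to derive Theorem~\ref{thm4.A} by combining the generic Cauchy identity of Proposition~\ref{prop2.A} with the coherency property of Theorem~\ref{thm3.A}, collapsing the resulting inner sum via Cauchy--Binet, and checking that the final constants fit by specializing to $\ka=\emptyset$. By the uniqueness half of Lemma~\ref{lemma4.A}, it is enough to exhibit \emph{any} finite expansion of $\prod_j F_N(t_j;\nu;\epsi)$ in the family $\{G_\ka\}$ whose coefficients match the claimed formula.

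First I would invoke Proposition~\ref{prop2.A} with the substitutions $y_j:=t_j^2$, $x_i:=(n_i+\epsi)^2$ (where $n_i:=\nu_i+N-i$), and $c_i:=(\epsi+i)^2$. Under these substitutions the product on the right-hand side of \eqref{eq.16} collapses identically to $\prod_{j=1}^K F_N(t_j;\nu;\epsi)$, yielding
\begin{equation*}
\prod_{j=1}^K F_N(t_j;\nu;\epsi)=\sum_{\mu\in\Sign^+_K} S_{\mu,N}\bigl(x_1,\dots,x_N\mid c_0,c_1,\dots\bigr)\,\si_{\mu,K}\bigl(t_1^2,\dots,t_K^2\mid c_L,c_{L+1},\dots\bigr).
\end{equation*}
I would then substitute the coherency relation \eqref{eq3.F} into each $S_{\mu,N}$ and interchange the two (finite) sums to rewrite the left-hand side as $\sum_{\ka\in\Sign^+_K}\La^N_K(\nu,\ka)\,T_\ka(t_1,\dots,t_K)$, where
\begin{equation*}
T_\ka(t):=\sum_{\mu\in\Sign^+_K}\frac{C(N,\mu;a)}{C(K,\mu;a)}\,S_{\mu,K}\bigl((k_1+\epsi)^2,\dots,(k_K+\epsi)^2\mid c_0,c_1,\dots\bigr)\,\si_{\mu,K}\bigl(t_1^2,\dots,t_K^2\mid c_L,c_{L+1},\dots\bigr).
\end{equation*}
This reduces the theorem to the $\nu$-free identity $T_\ka(t)=G_\ka(t_1,\dots,t_K)/d_K(\ka;\epsi)$.

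Proving this identity is the main obstacle. After the substitution $m_r:=\mu_r+K-r$, the ratio $C(N,\mu;a)/C(K,\mu;a)$ factorizes along rows as $\prod_{r=1}^K w(m_r)$ up to a $\mu$-independent constant, with the one-body weight $w(m):=(m+1)_{L-1}(m+a+1)_{L-1}$. Inserting the determinantal forms of $S_{\mu,K}$ and $\si_{\mu,K}$ from Definitions~\ref{def2.A} and~\ref{def2.B}, Cauchy--Binet collapses the $\mu$-sum to a single $K\times K$ determinant $\det\bigl[h((k_i+\epsi)^2,t_j)\bigr]_{i,j}$ with
\begin{equation*}
h((k+\epsi)^2,t):=\sum_{m\ge 0}w(m)\,\frac{((k+\epsi)^2\mid c_0,c_1,\dots)^m}{(t^2\mid c_L,c_{L+1},\dots)^m}.
\end{equation*}
Splitting $(k+\epsi)^2-(\epsi+j)^2=(k-j)(k+2\epsi+j)$ and $t^2-(\epsi+L+j)^2=-(-t+\epsi+L+j)(t+\epsi+L+j)$, this series rearranges into a balanced ${}_4F_3$ that coincides with the defining series \eqref{eq1.g_k} of $g_k(t)$ up to a $k$-independent constant; hence $\det[h((k_i+\epsi)^2,t_j)]_{i,j}$ is proportional to $\det[g_{\ka_i+K-i}(t_j)]_{i,j}$. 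Combined with the identity $V\bigl((k_1+\epsi)^2,\dots,(k_K+\epsi)^2\bigr)=V\bigl((K-1+\epsi)^2,\dots,\epsi^2\bigr)\cdot d_K(\ka;\epsi)$, this produces the factor $d_K(\ka;\epsi)$ in the denominator.

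The final scalar matching is automatic from the $\ka=\emptyset$ case. Evaluating $T_\ka$ directly at $\ka=\emptyset$ in its defining sum gives $T_\emptyset\equiv 1$: indeed, $S_{\mu,K}\bigl((K-1+\epsi)^2,\dots,\epsi^2\mid c_0,c_1,\dots\bigr)=\delta_{\mu,\emptyset}$ since the multiparameter Schur polynomial vanishes on its own ``parameter grid'' unless $\mu=\emptyset$. On the other hand $G_\emptyset\equiv 1$ and $d_K(\emptyset;\epsi)=1$, so comparing the Cauchy--Binet expression for $T_\emptyset$ with the value $1$ automatically produces the scalar identity relating $\det[g_{K-i}(t_j)]_{i,j}$, the dual-Schur denominator evaluated via Lemma~\ref{lemma2}, and the accumulated hypergeometric constants. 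The principal technical hurdle is therefore the Cauchy--Binet collapse itself, together with the careful tracking of Pochhammer ratios and Vandermonde signs that goes with it.
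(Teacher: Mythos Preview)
Your proposal is correct and follows essentially the same route as the paper: both arguments combine Proposition~\ref{prop2.A} with the coherency relation of Theorem~\ref{thm3.A}, then reduce the remaining $\nu$-free identity $T_\ka=G_\ka/d_K(\ka;\epsi)$ to a Cauchy--Binet computation recognizing the single-variable kernel as the ${}_4F_3$ series defining $g_k(t)$. The only organizational difference is that the paper packages this Cauchy--Binet step as a standalone expansion of $G_\ka/d_K(\ka;\epsi)$ in dual Schur functions (Lemma~\ref{lemma4.B}) and then tracks the constants explicitly via \eqref{4.3.14}, whereas you collapse $T_\ka$ directly and pin down the overall scalar by the $\ka=\emptyset$ specialization; these are equivalent bookkeepings of the same calculation.
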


This is Theorem \ref{thmA} from section \ref{sect1.7}

Before proceeding to the proof we need a preparation.  In the next lemma we are dealing with a particular case of the multiparameter Schur polynomials (Definition \ref{def2.A}) and dual Schur functions (Definition \ref{def2.B}). We assume $\mu,\ka\in\Sign^+_K$ and write $\mu\subseteq\ka$ if $\mu_i\le\ka_i$ for all $i=1,\dots,K)$. 

\begin{lemma}\label{lemma4.B}
For  $\ka\in\Sign^+_K$, one has
\begin{equation}\label{eq4.B}
\frac{G_\ka(t_1,\dots,t_K; a,\epsi,L)}{d_K(\ka;\epsi)}=\sum_{\mu:\, \mu\subseteq\ka} A_{\mu,\ka}\, \si_{\mu, K}(t_1^2,\dots,t_K^2\mid (L+\epsi)^2, (L+\epsi+1)^2,\dots),
\end{equation}
where the coefficients $A_{\mu,\ka}$ are defined by
\begin{align}
A_{\mu,\ka}: &=\prod_{i=1}^K\frac{(L)_{m_i}(L+a)_{m_i}(a+1)_{K-i}(K-i)!}{(a+1)_{m_i}m_i! (L)_{K-i}(L+a)_{K-i}}\label{4.product}\\
& \times S_{\mu, K}((k_1+\epsi)^2,\dots,(k_K+\epsi)^2\mid \epsi^2, (\epsi+1)^2,\dots)
\end{align}
with $m_i:=\mu_i+K-i$.
\end{lemma}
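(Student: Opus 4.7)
The plan is a Jacobi--Trudi-style calculation: first, re-expand each $g_k(t)$ in a simpler basis $\{\phi_m(t)\}$ built from the poles of $\FF(\epsi,L)$; then apply Cauchy--Binet to pass from the $g$-determinant to a sum of $\phi$-determinants; finally, identify the ratios of $\phi$-determinants as dual Schur functions and the coefficient determinants as multiparameter Schur polynomials.

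\emph{Step 1.} I would begin by rewriting the ${}_4F_3$ in \eqref{eq1.g_k}. The cancellation $(-t+L+\epsi)_j(t+L+\epsi)_j=(-1)^j(t^2\mid A_1^2,A_2^2,\dots)^j$ combined with $(-k)_j=(-1)^j k!/(k-j)!$ eliminates the alternating signs and yields a finite expansion $g_k(t)=\sum_{j=0}^k\alpha_{k,j}\phi_j(t)$, where $\phi_j(t):=1/(t^2\mid A_1^2,A_2^2,\dots)^j$ and
\begin{equation*}
\alpha_{k,j}=\binom{k}{j}\frac{(k+2\epsi)_j(L)_j(L+a)_j}{(a+1)_j}.
\end{equation*}

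\emph{Step 2.} Viewing $[g_{k_i}(t_j)]_{i,j=1}^K$ as a product of the (finitely supported) $K\times\infty$ matrix $[\alpha_{k_i,l}]$ with $[\phi_l(t_j)]$, Cauchy--Binet gives
\begin{equation*}
\det[g_{k_i}(t_j)]_{i,j=1}^K=\sum_{\mu\in\Sign^+_K,\ \mu\subseteq\ka}\det[\alpha_{k_i,m_r}]_{i,r=1}^K\cdot\det[\phi_{m_r}(t_j)]_{r,j=1}^K,
\end{equation*}
with $m_i:=\mu_i+K-i$; the support restriction $\mu\subseteq\ka$ is forced by $\alpha_{k,j}=0$ for $j>k$. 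Specializing to $\ka=\emptyset$ makes $[\alpha_{K-i,K-r}]$ upper-triangular, producing $\det[g_{K-i}(t_j)]=\prod_i\alpha_{K-i,K-i}\cdot\det[\phi_{K-i}(t_j)]$. Dividing the two identities and recognizing the $\phi$-ratio as $\si_{\mu,K}(t_1^2,\dots,t_K^2\mid(L+\epsi)^2,(L+\epsi+1)^2,\dots)$ via Definition \ref{def2.B} yields
\begin{equation*}
G_\ka(t_1,\dots,t_K)=\sum_{\mu\subseteq\ka}\frac{\det[\alpha_{k_i,m_r}]_{i,r=1}^K}{\prod_i\alpha_{K-i,K-i}}\cdot\si_{\mu,K}(t_1^2,\dots,t_K^2\mid(L+\epsi)^2,(L+\epsi+1)^2,\dots).
\end{equation*}

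\emph{Step 3 (the main obstacle).} It remains to evaluate the coefficient and match it to the product in \eqref{4.product}. Factoring $(L)_{m_r}(L+a)_{m_r}/[m_r!(a+1)_{m_r}]$ out of column $r$ of the numerator determinant reduces the residue to $\det[k_i^{\underline{m_r}}(k_i+2\epsi)_{m_r}]$. The algebraic identity $(k_i-j)(k_i+2\epsi+j)=(k_i+\epsi)^2-(j+\epsi)^2$ rewrites each entry as $((k_i+\epsi)^2\mid\epsi^2,(\epsi+1)^2,\dots)^{m_r}$, so by Definition \ref{def2.A} the residue equals $V((k_1+\epsi)^2,\dots,(k_K+\epsi)^2)\cdot S_{\mu,K}((k_1+\epsi)^2,\dots\mid\epsi^2,(\epsi+1)^2,\dots)$. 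Dividing by $d_K(\ka;\epsi)$ defined in \eqref{eq4.dkappa} replaces this Vandermonde by the reference Vandermonde $V((K-1+\epsi)^2,\dots,\epsi^2)$; via the factorization $(K-i+\epsi)^2-(K-j+\epsi)^2=(j-i)(2K-i-j+2\epsi)$, the reference Vandermonde evaluates in closed form to $\prod_i(K-i)!(K-i+2\epsi)_{K-i}$. The factor $(K-i+2\epsi)_{K-i}$ cancels the corresponding factor in $\alpha_{K-i,K-i}=(K-i+2\epsi)_{K-i}(L)_{K-i}(L+a)_{K-i}/(a+1)_{K-i}$, and collecting the surviving Pochhammer and factorial symbols reproduces \eqref{4.product} exactly. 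The Pochhammer bookkeeping and the closed-form evaluation of the reference Vandermonde will be the main obstacle; everything else is a routine application of definitions and Cauchy--Binet.
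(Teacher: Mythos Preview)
Your proposal is correct and follows essentially the same approach as the paper: both expand $g_k$ in the basis $\phi_m(t)=1/(t^2\mid A_1^2,\dots)^m$ (the paper writes this as $Y(t,m)$ up to a sign $(-1)^m$), apply Cauchy--Binet, and then identify the two determinant ratios as a multiparameter Schur polynomial and a dual Schur function. Your version is marginally cleaner because you absorb the alternating signs at the outset, whereas the paper carries a factor $(-1)^{|\mu|}$ in each of the two ratios separately and cancels them at the end; and where you compute the diagonal product $\prod_i\alpha_{K-i,K-i}$ and the reference Vandermonde explicitly, the paper instead factors the column weights from \emph{both} numerator and denominator determinants so that the product \eqref{4.product} drops out directly as their ratio---but these are cosmetic rearrangements of the same calculation.
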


Due to the constraint $\mu\subseteq\ka$, the expansion \eqref{eq4.B} is finite. Here is an immediate corollary of the lemma. 

\begin{corollary}
The rational function on the left-hand side of \eqref{eq4.B}, viewed as a function of the variables $t_1^{-1},\dots,t_K^{-1}$, is regular about the point $(0,\dots,0)$ and its value at this point equals $1$. 
\end{corollary}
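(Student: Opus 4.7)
The plan is to read off both the regularity and the value at infinity directly from the finite expansion \eqref{eq4.B}. By Lemma \ref{lemma3}, each dual Schur function $\si_{\mu, K}(y_1,\dots,y_K\mid c_1,c_2,\dots)$ belongs to the subalgebra of symmetric power series in $\C[[y_1^{-1},\dots,y_K^{-1}]]$. Substituting $y_j = t_j^2$ yields a symmetric power series in $t_1^{-2},\dots,t_K^{-2}$, hence one regular at $(t_1^{-1},\dots,t_K^{-1})=(0,\dots,0)$. Since the condition $\mu\subseteq\ka$ makes the sum in \eqref{eq4.B} finite, the left-hand side is regular at the origin in $t_j^{-1}$ as well.

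To identify the value at the origin, I evaluate the right-hand side by letting all $t_j\to\infty$. The key claim is
$$
\si_{\mu,K}(y_1,\dots,y_K\mid c_1,c_2,\dots)\bigm|_{y_1=\dots=y_K=\infty}=
\begin{cases}1, & \mu=\emptyset,\\ 0, & \mu\ne\emptyset,\end{cases}
$$
which follows by iterating Lemma \ref{lemma1}. At each step, sending the last variable to infinity either outputs $0$ (when $\ell(\mu)$ equals the current number of variables) or returns the dual Schur function with one fewer variable and the parameter sequence shifted by one slot. For nonempty $\mu$ with $\ell(\mu)=L$, successive reductions bring us to $\si_{\mu,L}$ with shifted parameters, and the next reduction outputs $0$. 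For $\mu=\emptyset$, every reduction falls in the first branch of \eqref{eq.10}, terminating at $\si_{\emptyset,0}=1$.

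Only the $\mu=\emptyset$ term of \eqref{eq4.B} therefore contributes at the origin, with value $A_{\emptyset,\ka}$, and it remains to check $A_{\emptyset,\ka}=1$. For $\mu=\emptyset$ we have $m_i=K-i$, so each factor in the product \eqref{4.product} collapses termwise to $1$. The remaining multiparameter Schur factor $S_{\emptyset,K}((k_1+\epsi)^2,\dots,(k_K+\epsi)^2\mid \epsi^2,(\epsi+1)^2,\dots)$ also equals $1$: the basis $\{(x\mid c_0,c_1,\dots)^m\}$ is related to $\{x^m\}$ by a unitriangular change, so the numerator determinant in Definition \ref{def2.A} equals $\det[x_i^{K-r}]=V(x_1,\dots,x_K)$ and cancels the denominator. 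The only mild obstacle is locating these two cancellations, both of which are essentially automatic once identified.
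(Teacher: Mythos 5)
Your proof is correct and follows essentially the same route as the paper, which simply remarks that the claim follows from $A_{\emptyset,\ka}=1$; you have filled in the implicit steps (regularity via Lemma \ref{lemma3} and finiteness of the sum, vanishing of $\si_{\mu,K}$ at the origin for $\mu\ne\emptyset$ via iterated use of Lemma \ref{lemma1}, and the two cancellations giving $A_{\emptyset,\ka}=1$). The only cosmetic point is that your letter $L$ for $\ell(\mu)$ clashes with the paper's $L=N-K+1$.
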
 

Indeed, this follows from the fact that the coefficient $A_{\emptyset,\ka}$ corresponding to the signature $\emptyset=(0,\dots0)$ equals $1$.

\begin{proof}[Proof of Lemma \ref{lemma4.B}]
\emph{Step} 1. We rewrite the definition \eqref{eq1.g_k} of the function $g_k(t)$ in the form
\begin{equation*}
g_k(t)=\sum_{m=0}^\infty X(k,m)Y(t,m),
\end{equation*}
where
\begin{gather}
X(k,m):=\frac{(-k)_m(k+2\epsi)_m(L)_m(L+a)_m}{(a+1)_m m!}, \label{4.3.1} \\
Y(t,m):=\frac1{(-t+\epsi+L)_m(t+\epsi+L)_m}.\label{4.3.9}
\end{gather}
The idea is to separate the terms depending on $t$ from those depending on $k$. The formally infinite series in fact terminates due to the factor $(-k)_m$. 

From this presentation it follows that for $\ka\in\Sign^+_K$,
\begin{equation}\label{4.3.2}
\det[g_{k_i}(t_j)]_{i,j=1}^K=\sum_{m_1>\dots>m_r\ge0}\det[X(k_i,m_r)]_{i,r=1}^K\det[Y(t_j,m_r)]_{j,r=1}^K.
\end{equation}

Let $\mu\in\Sign^+_K$ be the signature corresponding to the tuple $(m_1,\dots,m_K)$, meaning that $\mu_i=m_i-(K-i)$ for $i=1,\dots,K$. Observe that $\det[X(k_i,m_r)]_{i,r=1}^K=0$ unless $m_i\le k_i$ for all $i=1,\dots,K$. Indeed, suppose the opposite; then there exists an index $s$ such that $m_s>k_s$. It follows that $m_i>k_r$ whenever $i\le s\le r$. Due to the factor $(-k)_m$ in \eqref{4.3.1}, for any such pair $(i,r)$, the corresponding entry $X(k_i,m_r)$ vanishes. But this in turns implies that the determinant vanishes. 

We have proved that the summation in \eqref{4.3.2} in fact goes over the signatures $\mu\subseteq\ka$. 

\emph{Step} 2. In particular, for $\ka=\emptyset$ the sum \eqref{4.3.2} reduces to a single summand:
\begin{equation}\label{4.3.3}
\det[g_{K-i}(t_j)]_{i,j=1}^K=\det[X(K-i,K-r)]_{i,r=1}^K\det[Y(t_j,K-r)]_{j,r=1}^K.
\end{equation}
From \eqref{4.3.2}, \eqref{4.3.3},  and the definition \eqref{eq1.Gkappa} of the function $G_\ka(t_1,\dots,t_K)$ we obtain
\begin{equation}\label{4.3.4}
G_\ka(t_1,\dots,t_K)=\sum_{\mu:\, \mu\subseteq\ka}\frac{\det[X(k_i,m_r)]_{i,r=1}^K}{\det[X(K-i,K-r)]_{i,r=1}^K}\frac{\det[Y(t_j,m_r)]_{j,r=1}^K}{\det[Y(t_j,K-r)]_{j,r=1}^K}.
\end{equation}

We are going to show that
\begin{equation}\label{4.3.5}
\frac1{d_K(\ka;\epsi)}\,\frac{\det[X(k_i,m_r)]_{i,r=1}^K}{\det[X(K-i,K-r)]_{i,r=1}^K}=(-1)^{|\mu|}A_{\mu,\ka}
\end{equation}
and 
\begin{equation}\label{4.3.6}
\frac{\det[Y(t_j,m_r)]_{j,r=1}^K}{\det[Y(t_j,K-r)]_{j,r=1}^K}=(-1)^{|\mu|}\si_{\mu, K}(t_1^2,\dots,t_K^2\mid (L+\epsi)^2, (L+\epsi+1)^2,\dots).
\end{equation}
Then the three formulas  \eqref{4.3.4}, \eqref{4.3.5}, and \eqref{4.3.6} will imply the lemma.  

\emph{Step} 3. Let us prove \eqref{4.3.5}. From the definition of $X(k,m)$ (see \eqref{4.3.1}) we have
\begin{multline}\label{4.3.7}
\frac{\det[X(k_i,m_r)]_{i,r=1}^K}{\det[X(K-i,K-r)]_{i,r=1}^K}=(\text{the product \eqref{4.product}})\\
\times\frac{\det[(-k_i)_{m_r}(k_i+2\epsi)_{m_r}]}{\det[(-(K-i))_{K-r}(K-i+2\epsi)_{K-r}]}.
\end{multline}
Observe that
\begin{multline}\label{4.3.8}
(-k)_m(k+2\epsi)_m=\prod_{\ell=0}^{m-1}(-k+\ell)(k+2\epsi+\ell)=(-1)^m\prod_{l=0}^{m-1}((k+\epsi)^2-(\epsi+\ell)^2)\\
=(-1)^m((k+\epsi)^2\mid \epsi^2, (\epsi+1)^2,\dots)^m.
\end{multline}
It follows that 
\begin{equation}
\frac{\det[(-k_i)_{m_r}(k_i+2\epsi)_{m_r}]}{\det[(-(K-i))_{K-r}(K-i)_{K-r}]}=(-1)^{|\mu|}\frac{\det[((k_i+\epsi)^2\mid \epsi^2, (\epsi+1)^2,\dots)^{m_r}]}{\det[((K-i+\epsi)^2\mid \epsi^2, (\epsi+1)^2,\dots)^{K-r}}.
\end{equation}
Since
$$
\det[((K-i+\epsi)^2\mid \epsi^2, (\epsi+1)^2,\dots)^{K-r}]=\prod_{1\le i<j\le K}((K-i)^2-(K-j)^2),
$$
we may rewrite  \eqref{4.3.7} as
\begin{multline*}
\frac{\det[X(k_i,m_r)]_{i,r=1}^K}{\det[X(K-i,K-r)]_{i,r=1}^K}=(\text{the product \eqref{4.product}})\\
\times
(-1)^{|\mu|}d_K(\ka;\epsi)\, \frac{\det[((k_i+\epsi)^2\mid \epsi^2, (\epsi+1)^2,\dots)^{m_r}]}{\prod\limits_{1\le i<j\le K}((k_i+\epsi)^2-(k_j+\epsi)^2)}.
\end{multline*}
Substitute this into \eqref{4.3.5}; then the product over $i<j$ is cancelled. Using the definition of dual Schur functions (see \eqref{eq2.B}), we conclude that  the resulting expression is equal to $(-1)^{|\mu|} A_{\mu,\ka}$, as desired. 

\emph{Step 4}. Let us prove \eqref{4.3.6}. Similarly to \eqref{4.3.8} we have
\begin{equation}
(-t+\epsi+L)_m(t+\epsi+L)_m=(-1)^m (t^2\mid (L+\epsi)^2,(L+\epsi+1)^2,\dots)^m.
\end{equation}
From this and the definition of $Y(t,m)$ (see \eqref{4.3.9}) we obtain
\begin{equation}
\frac{\det[Y(t_j,m_r)]_{j,r=1}^K}{\det[Y(t_j,K-r)]_{j,r=1}^K}
=(-1)^{|\mu|}\frac{\det\left[\dfrac1{(t_j^2\mid (\epsi+L)^2,(\epsi+L+1)^2,\dots)^{m_r}}\right]}{\det\left[\dfrac1{(t_j^2\mid (\epsi+L)^2,(\epsi+L+1)^2,\dots)^{K-r}}\right]}.
\end{equation}
By the  definition of the dual Schur functions this equals the right-hand side of \eqref{4.3.6}.

This completes the proof of the lemma. 
\end{proof}

\begin{proof}[Proof of Theorem \ref{thm4.A}]
\emph{Step} 1. 
We begin with the coherency relation \eqref{eq3.F}, which we write in the form
\begin{multline*}
S_{\mu, N}((n_1+\epsi)^2,\dots,(n_N+\epsi)^2|\epsi^2, (\epsi+1)^2,
\dots)\\
=\sum_{\ka\in\Sign^+_K}
\La^N_K(\nu,\ka)S_{\mu, K}((k_1+\epsi)^2,\dots,(k_K+\epsi)^2|\epsi^2,
(\epsi+1)^2,\dots)\frac{C(N,\mu;a)}{C(K,\mu;a)}.
\end{multline*}
Here $\mu\in\Sign^+_K$ is arbitrary; recall also that the sum is finite, because for each fixed $\nu$, the quantity $\La^N_K(\nu,\ka)$ is nonzero only for finitely many $\ka$'s. 

Let us multiply the both sides by 
$$
\si_{\mu, K}(t^2_1,\dots,t^2_K\mid (\epsi+N-K+1)^2,\,(\epsi+N-K+2)^2,\,\dots)
$$ 
and sum over all $\mu\in\Sign^+_K$, which makes sense in the algebra of formal power series in $t^{-2}_1,\dots,t^{-2}_K$ due to Lemma \ref{lemma3}. The resulting equality has the form
\begin{multline}\label{4.3.10}
\sum_{\mu\in\Sign^+_K}S_{\mu, N}((n_1+\epsi)^2,\dots,(n_N+\epsi)^2|\epsi^2,(\epsi+1)^2,\dots)\\
\times \si_{\mu, K}(t^2_1,\dots,t^2_K\mid (\epsi+N-K+1)^2,\,(\epsi+N-K+2)^2,\,\dots)\\
=\sum_{\mu\in\Sign^+_K}\sum_{\ka\in\Sign^+_K}
\La^N_K(\nu,\ka)S_{\mu, K}((k_1+\epsi)^2,\dots,(k_K+\epsi)^2|\epsi^2,
(\epsi+1)^2,\dots)\frac{C(N,\mu;a)}{C(K,\mu;a)}\\
\times \si_{\mu, K}(t^2_1,\dots,t^2_K\mid (\epsi+N-K+1)^2,\,(\epsi+N-K+2)^2,\,\dots).
\end{multline}

We will show that this equation can be reduced to \eqref{MasterEq}.

\emph{Step} 2. Examine the left-hand side of \eqref{4.3.10}. We apply to it the Cauchy-type identity (see \eqref{eq.16})
\begin{multline*}
\sum_{\mu:\,\ell(\mu)\le K}S_{\mu, N}(x_1,\dots,x_N\mid c_0,c_1,\dots)
\si_{\mu \mid K}(y_1,\dots,y_K\mid c_{N-K+1},c_{N-K+2},\dots)
\\
=\prod_{j=1}^K\frac{(y_j-c_0)\dots(y_j-c_{N-1})}{(y_j-x_1)\dots(y_j-x_N)},
\end{multline*}
where we specialize 
\begin{equation*}
x_1:=(n_1+\epsi)^2,\;\dots,\; x_N:=(n_N+\epsi)^2, \qquad y_1:=t_1^2,\; \dots,\;y_K:=t_K^2
\end{equation*}
and
$$
c_i:=(\epsi+i)^2, \quad i=0,1,\dots\,.
$$
Then the result gives us the left-hand side of \eqref{MasterEq}. 

\emph{Step} 3. 
We proceed now to the right-hand side of \eqref{4.3.10}. Here we may interchange the two summations, because $\ka$ actually ranges over a finite set depending only on $\nu$. Then we obtain the double sum
$$
\sum_{\ka\in\Sign^+_K}\La^N_K(\nu,\ka) \sum_{\mu\in\Sign^+_K}(\cdots),
$$
where the interior sum over $\mu$ has the form
\begin{multline}\label{4.3.11}
\sum_{\mu\in\Sign^+_K}
S_{\mu, K}((k_1+\epsi)^2,\dots,(k_K+\epsi)^2|\epsi^2,
(\epsi+1)^2,\dots)\frac{C(N,\mu;a)}{C(K,\mu;a)}\\
\times \si_{\mu, K}(t^2_1,\dots,t^2_K\mid (\epsi+N-K+1)^2,\,(\epsi+N-K+2)^2,\,\dots).
\end{multline}
We will prove that this sum equals 
$$
\frac{G_\ka(t_1,\dots,t_K)}{d_K(\ka,\epsi)},
$$
which in turn will imply that that the right-hand side of \eqref{4.3.10} coincides with the right-hand side of \eqref{MasterEq}.

Comparing \eqref{4.3.11} with the result of Lemma \ref{lemma4.B} we see that it remains to check the equality
\begin{equation}\label{4.3.14}
\frac{C(N,\mu;a)}{C(K,\mu;a)}=\prod_{i=1}^K\frac{(L)_{m_i}(L+a)_{m_i}(a+1)_{K-i}(K-i)!}{(a+1)_{m_i}m_i! (L)_{K-i}(L+a)_{K-i}}.
\end{equation}

The quantities on the left-hand side were defined in \eqref{eq3.D}; we have
\begin{equation}\label{4.3.12}
C(N,\mu;a)= 2^{|\mu|}\, \prod_{i=1}^N
\frac{\Gamma(\mu_i+N-i+1)\Gamma(\mu_i+N-i+a+1)} {\Gamma(N-i+1)\Gamma(N-i+a+1)}
\end{equation}
and
\begin{equation}\label{4.3.13}
C(K,\mu;a)= 2^{|\mu|}\, \prod_{i=1}^K
\frac{\Gamma(\mu_i+K-i+1)\Gamma(\mu_i+K-i+a+1)} {\Gamma(K-i+1)\Gamma(K-i+a+1)}
\end{equation}
Since $\ell(\mu)\le K$, the product in \eqref{4.3.12} can in fact be restricted to $i=1,\dots,K$. After that the equality \eqref{4.3.14} is readily checked.  

This completes the proof of the theorem.
\end{proof}

\begin{remark}
The main ingredients of the proof of Theorem \ref{thm4.A} are two formulas involving  Schur-type functions: the Cauchy-type identity \eqref{eq.16} and the coherency relation \eqref{eq3.F}. A similar mechanism works in the case of unitary groups \cite{BO-AdvMath}. 
\end{remark}

\begin{remark}
 In the statement of Theorem \ref{thm4.A} it was assumed $N>K$. In this remark we examine what happens for $N=K$. Looking at the proof one sees that it works for $N=K$, with the understanding that $\La^K_K(\nu,\ka)=\de_{\nu,\ka}$. Then the result reduces to
\begin{equation*}
\prod_{j=1}^KF_K(t_j;\nu;\epsi)=\sum_{\ka\in \Sign^+_K}\de_{\nu,\ka}\frac{G_\ka(t_1,\dots,t_K;a,\epsi,1)}{d_K(\ka;\epsi)},
\end{equation*}
where we used a more detailed notation $G_\ka(t_1,\dots,t_K;a,\epsi,L)$ instead of $G_\ka(t_1,\dots,t_K)$ and then specialized $L$ to $1$, because $N=K$ means $L=1$. 

Rewrite this equality as
\begin{equation*}
G_\ka(t_1,\dots,t_K;a,\epsi,1)=d_K(\ka;\epsi)\prod_{j=1}^KF_K(t_j;\ka;\epsi)
\end{equation*}
or else, in a more detailed form (we use the definition \eqref{eq1.F_N}),
\begin{equation}\label{4.7.1}
G_\ka(t_1,\dots,t_K;a,\epsi,1)=\prod_{j=1}^K\prod_{i=1}^K\frac{t_j^2-(k^0_i+\epsi)^2}{t_j^2-(\ka_i+\epsi)^2}\cdot \prod_{1\le i<j\le K}\frac{(k_i+\epsi)^2-(k_j+\epsi)^2}{(k_i^0+\epsi)^2-(k_j^0+\epsi)^2},
\end{equation}
where $k_i:=\ka_i+K-i$ and $k^0_i:=K-i$. 

Formula \eqref{4.7.1} can be checked directly, as follows.   

For $L=1$, the definition \eqref{eq1.g_k} drastically simplifies and takes the form
\begin{equation}\label{4.7.2}
g_k(t;a,\epsi,1):={}_3F_2\left[\begin{matrix}-k,\, k+2\epsi,\, 1\\-t+1+\epsi,\,
t+1+\epsi \end{matrix}\Biggl|1\right]=\frac{t^2-\epsi^2}{t^2-(k+\epsi)^2},
\end{equation}
where the second equality follows from a well-known summation formula due to Saalsch\"utz (Bailey \cite[section 2.2, (1)]{Bailey}). The resulting expression \eqref{4.7.2} is precisely the special case of \eqref{4.7.1} corresponding to $K=1$. 

Next, using \eqref{4.7.2}, we obtain for $K\ge2$
$$
\det[g_{k_i}(t_j;a,\epsi,1)]_{i,j=1}^K=\prod_{j=1}^K(t_j^2-\epsi^2)\cdot\det\left[\dfrac1{t_j^2-(k_i+\epsi)^2}\right]_{i,j=1}^K.
$$
The determinant on the right is a Cauchy determinant. It follows that
$$
\det[g_{k_i}(t_j;a,\epsi,1)]_{i,j=1}^K=(\cdots)\prod_{i,j=1}^K\frac1{t_j^2-(k_i+\epsi)^2}\cdot\prod_{1\le i<j\le K}((k_i+\epsi)^2-(k_j+\epsi)^2),
$$
where the dots denote an expression which depends only on $t_j$'s but not on  $k_i$'s. Dividing this by the similar expression with $k_i=k_i^0$ we finally obtain \eqref{4.7.1}.
\end{remark}

Recall that  the functions $g_k(t)$ constitute a basis of $\FF(\epsi,L)$ (step 1 of the proof of Lemma \ref{lemma4.A}).  
Given a function $f\in \FF(\epsi,L)$, we denote by $(f: g_k)$ the $k$th coefficient ($k=0,1,2,\dots$) in the expansion of $f$ in the basis $\{g_k\}$.

The next result is a corollary of Theorem \ref{thm4.A}. 

\begin{theorem}\label{thm4.B}
The following determinantal formula holds
\begin{equation}\label{4.JacobiTrudi}
\frac{\La^N_K(\nu,\ka)}{d_K(\ka;\epsi)}
=\det\left[(g_{K-j}F_N:\,  g_{k_i})\right]_{i,j=1}^K.
\end{equation} 
\end{theorem}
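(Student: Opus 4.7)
The plan is to deduce the determinantal identity directly from Theorem \ref{thm4.A} by combining the standard trick of factoring scalars into columns of a determinant with the Cauchy--Binet expansion; this is the well-known mechanism by which a Jacobi--Trudi-type formula follows from a Cauchy-type identity.

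First, I would rewrite the left-hand side of \eqref{MasterEq}, multiplied through by the denominator $\det[g_{K-i}(t_j)]_{i,j=1}^K$ of $G_\ka$, in determinantal form. Pulling the scalar $F_N(t_j;\nu;\epsi)$ into the $j$-th column of $\det[g_{K-i}(t_j)]$ and using the definition \eqref{eq1.Gkappa}, the identity of Theorem \ref{thm4.A} becomes
\begin{equation*}
\det\bigl[g_{K-i}(t_j)F_N(t_j;\nu;\epsi)\bigr]_{i,j=1}^K
=\sum_{\ka\in\Sign^+_K}\frac{\La^N_K(\nu,\ka)}{d_K(\ka;\epsi)}\,\det[g_{k_i}(t_j)]_{i,j=1}^K.
\end{equation*}

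Next I would expand each matrix entry on the left in the basis $\{g_k\}$ of $\FF(\epsi,L)$. By step 3 of the proof of Lemma \ref{lemma4.A}, each function $g_{K-i}(t)F_N(t;\nu;\epsi)$ lies in $\FF(\epsi,L)$, and since it is a rational function with only finitely many poles, its expansion $g_{K-i}(t)F_N(t;\nu;\epsi) = \sum_{k\ge0}(g_{K-i}F_N:g_k)g_k(t)$ terminates. Substituting and applying the Cauchy--Binet formula to the resulting matrix product gives
\begin{equation*}
\det\bigl[g_{K-i}(t_j)F_N(t_j;\nu;\epsi)\bigr]_{i,j=1}^K
=\sum_{k_1>\dots>k_K\ge0}\det[(g_{K-i}F_N:g_{k_r})]_{i,r=1}^K\,\det[g_{k_r}(t_j)]_{r,j=1}^K,
\end{equation*}
where the strictly decreasing tuples $(k_1,\dots,k_K)$ are in bijection with $\ka\in\Sign^+_K$ via $k_i=\ka_i+K-i$.

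Finally, I would compare the two expansions. The numerator determinants $\det[g_{k_i}(t_j)]_{i,j=1}^K$ indexed by $\ka\in\Sign^+_K$ are linearly independent (this is the uniqueness statement invoked in step 2 of the proof of Lemma \ref{lemma4.A}, which rests on the fact that $\{g_k\}$ is a basis of $\FF(\epsi,L)$). Matching coefficients yields
\begin{equation*}
\frac{\La^N_K(\nu,\ka)}{d_K(\ka;\epsi)}=\det[(g_{K-i}F_N:g_{k_r})]_{i,r=1}^K,
\end{equation*}
and transposing the $K\times K$ matrix (i.e. swapping the roles of the row and column indices) gives the form \eqref{4.JacobiTrudi} stated in the theorem. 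The only step that requires any care is justifying Cauchy--Binet, which is immediate once one observes that the expansions of the entries in $\{g_k\}$ are finite sums; after that the proof is essentially bookkeeping.
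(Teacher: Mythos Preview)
Your proof is correct and follows essentially the same route as the paper's own argument: both multiply the identity of Theorem \ref{thm4.A} by the denominator determinant to obtain \eqref{eq4.C}, and then expand $\det[g_{K-i}(t_j)F_N(t_j)]$ in the determinants $\det[g_{k_i}(t_j)]$ to read off the coefficients. The only cosmetic difference is that you explicitly invoke Cauchy--Binet and then transpose, whereas the paper simply asserts the coefficient formula $c(k_1,\dots,k_K)=\det[(h_{K-j}:g_{k_i})]$ directly (with the indices already arranged as in \eqref{4.JacobiTrudi}); the underlying mechanism is identical.
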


This is Theorem \ref{thmB} from section \ref{sect1.8}. 

\begin{proof}
Recall (formula \eqref{MasterEq}) that
\begin{equation}\label{MasterEq1}
\prod_{j=1}^KF_N(t_j;\nu;\epsi)=\sum_{\ka\in\Sign^+_K}\frac{\La^N_K(\nu,\ka)}{d_K(\ka;\epsi)}\,G_\ka(t_1,\dots,t_K)
\end{equation}
and (the definition \eqref{eq1.Gkappa}) that
\begin{equation}\label{eq1.Gkappa1}
G_{\ka,K}(t_1,\dots,t_K)=\frac{\det[g_{\ka_i+K-i}(t_j)]_{i,j=1}^K}{\det[g_{K-i}(t_j)]_{i,j=1}^K}, \quad \ka\in\Sign^+_K\,.
\end{equation}
Substituting \eqref{eq1.Gkappa1} into \eqref{MasterEq1} and multiplying both sides by $\det[g_{K-i}(t_j)]_{i,j=1}^K$ we obtain 
\begin{equation}\label{eq4.C}
\det[g_{K-i}(t_j)F_N(t_j;\nu;\epsi)]_{i,j=1}^K=\sum_{k_1>\dots>k_K\ge0} \frac{\La^N_K(\nu,\ka)}{d_K(\ka;\epsi)}\, \det[g_{k_i}(t_j)]_{i,j=1}^K.
\end{equation}

Next, recall that the functions $g_0(t), g_1(t),\dots$ form a basis of the space $\FF(\epsi,L)$ and the functions  $g_{K-i}(t)F_N(t;\nu;\epsi)$ lie in this space (see the proof of Lemma \ref{lemma4.A}, steps 1 and 3). 

Now let us abbreviate
$$
h_{K-i}(t):=g_{K-i}(t)F_N(t;\nu;\epsi).
$$
From the above it follows that there exists a \emph{unique} expansion
$$
\det[h_{K-i}(t_j)]_{i,j=1}^K=\sum_{k_1>\dots>k_K\ge0} c(k_1,\dots,k_K) \det[g_{K-i}(t_j)]_{i,j=1}^K, 
$$
valid for all $t_1,\dots,t_K$. Furthermore, the coefficients of this expansion are given by
$$
c(k_1,\dots,k_K)=\det\left[(h_{K-j}:\,  g_{k_i})\right]_{i,j=1}^K.
$$
It follows that \eqref{eq4.C} implies \eqref{4.JacobiTrudi}.
\end{proof}

A similar determinantal formula holds for the unitary groups, see \cite[Proposition 6.2]{BO-AdvMath}. Notice that \eqref{4.JacobiTrudi} resembles the classical Jacobi--Trudi formula for the Schur symmetric polynomials, and the above argument is similar to the derivation of the latter formula from the Cauchy identity.

\section{Expansion in the basis $\{g_k(t)\}$ in the general case: proof of Theorem \ref{thmC}}\label{sect5}

Recall that we are dealing with the functions defined by \eqref{eq1.g_k}:
\begin{equation*}
g_k(t;a,\epsi,L):={}_4F_3\left[\begin{matrix}-k,\, k+2\epsi,\, L,\, L+a\\-t+L+\epsi,\,
t+L+\epsi,\, a+1\end{matrix}\Biggl|1\right], \qquad k=0,1,2,\dots\,.
\end{equation*}
Here $L\ge2$ is a positive integer, $a>-1$ and $\epsi\ge0$ are real parameters. We keep these parameters fixed and abbreviate $g_k(t):=g_k(t;a,\epsi,L)$.

We keep to the notation introduced in sect. \ref{sect1.8} and \ref{sect1.9}. In particular, $\{e_m: m\in\Z_{\ge0}\}$ is the basis of $\FF(\epsi,L)$ defined in \eqref{eq1.e} and $\Res_{t=A_m}(\phi(t))$ denotes the residue of $\phi(t)$ at the point $t=A_m$.

The Jacobi-Trudi-type formula given by Theorem \ref{thm4.B} reduces the computation of the matrix entries  $\La^N_K(\nu,\ka)$ to the following  one-dimensional problem (it was already stated in section \ref{sect1.9}): 

\begin{problem}
Given a function $\phi\in\FF(\epsi,L)$, how to compute the coefficients $(\phi:g_k)$ of its expansion in the basis $\{g_k\}$? Specifically, we need this for the functions $\phi=g_{K-j}F_N$. 
\end{problem}

In the present section we study the problem in the case of general Jacobi parameters (as before, the only constraints are those of \eqref{eq1.Jacobi}).

\begin{proposition}\label{prop5.A}
For any $\phi\in \FF(\epsi,L)$, one has  
\begin{equation}\label{eq5.coeff}
(\phi:g_k)=\begin{cases} \sum_{m\ge k}\Res_{t=A_m}(\phi(t))(e_m:g_k), & k\ge1, \\
\phi(\infty)+\sum_{m\ge1}\Res_{t=A_m}(\phi(t))(e_m: g_0), & k=0,
\end{cases}
\end{equation}
\end{proposition}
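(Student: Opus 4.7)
The plan is to reduce the statement to an expansion in the basis $\{e_m\}$, which is essentially a partial-fraction decomposition, and then convert to the basis $\{g_k\}$ via a triangularity argument.

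First I would establish the partial-fraction identity
\begin{equation*}
\phi(t)=\phi(\infty)+\sum_{m\ge1}\Res_{t=A_m}(\phi(t))\,e_m(t),\qquad \phi\in\FF(\epsi,L).
\end{equation*}
The sum is finite because a rational $\phi$ has only finitely many poles. To derive the formula, recall that every $\phi\in\FF(\epsi,L)$ is even, rational, regular at infinity, and has only simple poles confined to $(-\AAA(\epsi,L))\cup\AAA(\epsi,L)$. Evenness forces $\Res_{t=-A_m}\phi=-\Res_{t=A_m}\phi$, so the principal part of $\phi$ at the pair $\{A_m,-A_m\}$ equals $\Res_{t=A_m}(\phi)\cdot e_m(t)$. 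Subtracting all of these principal parts from $\phi$ leaves an even rational function with no poles and regular at $\infty$, hence a constant, which must equal $\phi(\infty)$.

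Next, since $g_0\equiv 1=e_0$, I would use the triangular structure of the change of basis between $\{e_m\}$ and $\{g_k\}$. By inspection of the definition \eqref{eq1.g_k}, $g_k(t)$ is a rational function whose poles lie in $\{\pm A_1,\dots,\pm A_k\}$, with a nonzero residue at $\pm A_k$; hence $g_k\in\Span\{e_0,e_1,\dots,e_k\}$ with nonvanishing coefficient of $e_k$. Inverting this strictly triangular relation yields $e_m\in\Span\{g_0,g_1,\dots,g_m\}$, so the transition coefficients satisfy
\begin{equation*}
(e_m:g_k)=0\quad\text{whenever}\quad m<k.
\end{equation*}
In particular $(e_0:g_k)=\delta_{0,k}$.

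Finally, I would substitute the expansion $e_m=\sum_{k\ge 0}(e_m:g_k)g_k$ into the partial-fraction identity and interchange the two finite sums to obtain
\begin{equation*}
\phi=\phi(\infty)\,g_0+\sum_{k\ge 0}\Bigl(\sum_{m\ge 1}\Res_{t=A_m}(\phi)\,(e_m:g_k)\Bigr)g_k.
\end{equation*}
Equating the coefficient of $g_k$ on both sides, and applying the triangularity $(e_m:g_k)=0$ for $m<k$ to cut off the lower range of summation in the $k\ge 1$ case, gives exactly \eqref{eq5.coeff}. The only subtlety in this argument is the partial-fraction step; once evenness is used to pair up the residues and the constant-at-infinity is handled via subtraction, everything else is a straightforward bookkeeping exercise on finite sums.
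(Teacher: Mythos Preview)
Your proof is correct and follows essentially the same route as the paper: expand $\phi$ in the basis $\{e_m\}$ via partial fractions (identifying $(\phi:e_0)=\phi(\infty)$ and $(\phi:e_m)=\Res_{t=A_m}\phi$ for $m\ge1$), then pass to the basis $\{g_k\}$ using the triangularity $(e_m:g_k)=0$ for $m<k$. Your justification of the partial-fraction step via evenness is a bit more explicit than the paper's, but the argument is the same.
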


\begin{proof}
Write the expansion of $\phi$ in the basis $\{e_m\}$ as
$$
\phi=\sum_{m\ge0}(\phi: e_m)e_m.
$$
From this we obtain
\begin{equation}\label{eq5.coeff1}
(\phi: g_k)=\sum_{m\ge0}(\phi: e_m)(e_m:g_k), \quad k\in\Z_{\ge0}
\end{equation}

On the other hand, from the definition of the functions $e_m(t)$ it follows that  
\begin{equation}\label{eq5.coeff2}
(\phi: e_0)=\phi(\infty); \qquad (\phi: e_m)=\Res_{t=A_m}(\phi(t)), \quad m\ge1.
\end{equation}

Recall that $g_0=1$ and the only poles of the function $g_k$ with index $k\ge1$ are the points $A_{\pm \ell}$ with $1\le \ell\le k$. Therefore, for each $k\ge0$, the coefficients $(g_k:e_\ell)$ vanish unless $\ell\le k$. This triangularity property in turn implies that the coefficients $(e_m,g_k)$ vanish unless $m\ge k$. Thus, we may rewrite \eqref{eq5.coeff1} as
\begin{equation*}
(\phi: g_k)=\sum_{m\ge k}(\phi: e_m)(e_m:g_k), \quad k\in\Z_{\ge0}. 
\end{equation*}
Together with \eqref{eq5.coeff2}, this yields \eqref{eq5.coeff}. 
\end{proof}

To apply Proposition \ref{prop5.A}, we need to know the transition coefficients $(e_m:g_k)$ with $m\ge k\ge0$ and $m\ge1$. They are computed in the next theorem. 

\begin{theorem}\label{thm5.A}

{\rm(i)} For $m\ge1$ and $k\ge1$,
\begin{gather*}
(e_m:g_k)=2(L+\epsi+m-1)(2L+2\epsi+m-1)_{k-1}\frac{(m-1)!}{(m-k)!}\\
\times \frac{(a+1)_k}{(L)_k(L+a)_k(k+2\epsi)_k}\\
\times
{}_4F_3\left[\begin{matrix}  k-m,\; k+1,\; k+a+1,\; 2L+2\epsi+m+k-2
\\ L+k,\; L+a+k,\; 2k+2\epsi+1\end{matrix}\;\Biggl|\;1\right].
\end{gather*}

{\rm(ii)} For $m\ge1$ and $k=0$,
\begin{equation*}
\begin{gathered}
(e_m:g_0)=-\frac{2(L+\epsi+m-1)(a+1)}{L(L+a)(2\epsi+1)}\\
\times{}_4F_3\left[\begin{matrix}  1-m,\; 1,\;a+2,\; 2L+2\epsi+m-1
\\ L+1,\; L+a+1,\; 2\epsi+2\end{matrix}\;\Biggl|\;1\right].
\end{gathered}
\end{equation*}
\end{theorem}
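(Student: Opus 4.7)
The strategy is to determine $(e_m:g_k)$ via triangular inversion. Since $g_k(t)$ is even, rational, regular at infinity, and has simple poles contained in $\{\pm A_1,\dots,\pm A_k\}$, it admits the finite partial-fraction expansion
\[
g_k(t)=g_k(\infty)+\sum_{\ell=1}^k r_{k,\ell}\,e_\ell(t),\qquad r_{k,\ell}:=\Res_{t=A_\ell}g_k(t),
\]
so that the direct change-of-basis matrix $[(g_k:e_\ell)]$ is lower triangular (with $(g_k:e_0)=g_k(\infty)$). By the triangularity established in the proof of Proposition \ref{prop5.A}, $(e_m:g_k)=0$ for $k>m$, and therefore the matrix $[(e_m:g_k)]$ is the inverse of the former. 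My plan is (a) to compute $g_k(\infty)$ and each residue $r_{k,\ell}$ as an explicit terminating hypergeometric sum, and (b) to invert the triangular system in closed form.

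For step (a), I would substitute the expansion $g_k(t)=\sum_{j=0}^k X(k,j)Y(t,j)$ from the proof of Lemma \ref{lemma4.B}, with
\[
X(k,j)=\frac{(-k)_j(k+2\epsi)_j(L)_j(L+a)_j}{(a+1)_j\,j!},\qquad Y(t,j)=\frac{1}{(L+\epsi-t)_j(L+\epsi+t)_j}.
\]
The Pochhammer factor $(L+\epsi-t)_j=\prod_{i=0}^{j-1}(L+\epsi-t+i)$ has exactly one simple zero at $t=A_\ell$ (at $i=\ell-1$, present whenever $j\ge\ell$), which yields after elementary factorial accounting
\[
\Res_{t=A_\ell}Y(t,j)=\frac{(-1)^\ell}{(\ell-1)!\,(j-\ell)!\,(2L+2\epsi+\ell-1)_j}.
\]
Summing $X(k,j)$ times this over $j\ge\ell$ and shifting the summation index to $j-\ell$ expresses $r_{k,\ell}$ as an explicit prefactor times a terminating ${}_4F_3(1)$; the value $g_k(\infty)$, coming from the $j=0$ term of the same expansion, collapses to a balanced ${}_3F_2(1)$.

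For step (b), the plan is to postulate the formulas stated in the theorem and verify them by substitution into the defining triangular identity
\[
\sum_{k=j}^m(e_m:g_k)\,(g_k:e_j)=\de_{m,j},\qquad 0\le j\le m.
\]
This becomes a bilinear sum in two terminating ${}_4F_3(1)$'s which, after reindexing and Pochhammer manipulation, should reduce to one of the classical Sears--Whipple transformations for ${}_4F_3$ series tabulated in Bailey \cite{Bailey}. The specific shape of the upper parameters $(k-m,\,k+1,\,k+a+1,\,2L+2\epsi+m+k-2)$ and lower parameters $(L+k,\,L+a+k,\,2k+2\epsi+1)$ in part (i), in particular the near-well-poised pair $(k+1,\,2k+2\epsi+1)$, is a strong hint as to which transformation is operative; once matched, the verification becomes routine bookkeeping with Pochhammer symbols. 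Part (ii) then drops out either as the limit $k\to0$ of part (i) (using $g_0\equiv1$) or, more economically, from the single scalar equation $e_m(\infty)=0$ combined with the formulas already obtained for $k\ge1$. The main obstacle is step (b) — pinning down precisely which ${}_4F_3$ transformation inverts the direct coefficients in closed form; once that identity is isolated, the remainder of the proof is mechanical.
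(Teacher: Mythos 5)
Your step (a) is sound: the residue computation for $Y(t,j)$ at $t=A_\ell$ is correct, and it does give the \emph{direct} transition coefficients $(g_k:e_\ell)$ as explicit terminating sums. But the theorem is a statement about the \emph{inverse} matrix $(e_m:g_k)$, and your step (b) — where all the actual content lies — is not carried out. You postulate the answer and propose to verify $\sum_{k=j}^m(e_m:g_k)(g_k:e_j)=\de_{m,j}$, asserting that this bilinear sum of two terminating ${}_4F_3(1)$'s ``should reduce to one of the classical Sears--Whipple transformations.'' That is a triple sum; no specific identity is named, no reduction is exhibited, and it is far from routine that such a biorthogonality relation collapses to a tabulated transformation (relations of this kind for ${}_4F_3$-type families are essentially Racah-polynomial orthogonality statements and are genuinely nontrivial to verify head-on). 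As written, the proposal is a plan with the decisive step missing, and you acknowledge as much in your closing sentence.

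The paper avoids the inversion problem altogether by interpolating a third basis $f_\ell(t)=1/\bigl((-t+L+\epsi)_\ell(t+L+\epsi)_\ell\bigr)$ between $\{e_m\}$ and $\{g_k\}$ and writing $(e_m:g_k)=\sum_{\ell=k}^m(e_m:f_\ell)(f_\ell:g_k)$. Both factors have elementary closed forms: $(e_m:f_\ell)$ comes from the Newton-type partial-fraction identity
$$
\frac1{z-a_m}=\sum_{\ell\ge1}\Bigl(\prod_{j=1}^{\ell-1}(a_m-a_j)\Bigr)\frac1{(z-a_1)\cdots(z-a_\ell)}
$$
with $z=t^2$, $a_m=(L+\epsi+m-1)^2$, while $(f_\ell:g_k)$ is read off from the classical expansion of $\bigl(\tfrac{1-x}{2}\bigr)^\ell$ in Jacobi polynomials (Rodrigues formula). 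The single sum over $\ell$ of the product of these two closed forms \emph{is} the ${}_4F_3$ in the statement — no hypergeometric transformation is needed at all. If you want to salvage your route, you would need to actually locate and prove the inversion identity; the intermediate-basis trick is the cheaper path. (Your observation that part (ii) follows from $e_m(\infty)=0$ once part (i) is known does not work directly either, since that single scalar relation gives $\sum_{k\ge1}(e_m:g_k)g_k(\infty)+(e_m:g_0)=0$ only after you know the expansion converges to $e_m$ pointwise at $\infty$ — which is fine — but it produces $(e_m:g_0)$ as a sum over $k$, not as the stated single ${}_4F_3$; the paper instead computes $(e_m:g_0)$ by the same composition through $\{f_\ell\}$, and uses the $e_m(\infty)=0$ trick only later, in Theorem \ref{thm7.A}, to evaluate that sum in the three distinguished cases.)
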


This theorem (together with Proposition \ref{prop5.A}) is a detailed version of Theorem \ref{thmC} from section \ref{sect1.9}. 

The proof is based on three lemmas. To state them we need to introduce auxiliary rational functions
$$
f_\ell(t):=\frac1{(-t+L+\epsi)_\ell(t+L+\epsi)_\ell}, \quad \ell\in\Z_{\ge0}.
$$

From the proof of Proposition \ref{prop5.A} we know that the transition matrix between the bases $\{e_m\}$ and $\{g_k\}$ is triangular with respect to the natural order on the index set $\Z_{\ge0}$. 

\begin{lemma}\label{lemma5.C}

{\rm(i)} The functions $f_\ell$ form a basis of $\FF(\epsi,L)$.

{\rm(ii)} The transition matrices between all three bases, $\{e_m\}$, $\{g_k\}$, and $\{f_\ell\}$, are triangular.  
\end{lemma}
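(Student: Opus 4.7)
The strategy is to read off the pole structure of each family and chain together three nested triangularities.

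For (i), note that $f_\ell(t)$ is manifestly even (the substitution $t\mapsto -t$ swaps the two Pochhammer factors), regular at $t=\infty$ (it decays like $t^{-2\ell}$), and its singularities are exactly the zeros of $(-t+L+\epsi)_\ell(t+L+\epsi)_\ell$, namely the $2\ell$ simple poles located at $\pm A_1,\dots,\pm A_\ell$. Hence $f_\ell\in\FF(\epsi,L)$. To see that the $f_\ell$ form a basis, I expand each $f_\ell$ in the basis $\{e_m\}$. Since $f_\ell$ has no pole at any $A_m$ with $m>\ell$, one has $(f_\ell:e_m)=0$ for $m>\ell$, so the expansion is upper-triangular. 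The diagonal entry $(f_\ell:e_\ell)$ is the residue at $A_\ell=L+\epsi+\ell-1$; isolating the unique vanishing linear factor $L+\epsi-t+\ell-1$ in $(-t+L+\epsi)_\ell$ and evaluating the remaining factors gives
$$
\Res_{t=A_\ell}f_\ell(t)=\frac{(-1)^\ell}{(\ell-1)!\,(2L+2\epsi+\ell-1)_\ell},
$$
which is nonzero because $L\ge2$ and $\epsi\ge0$. Thus the transition matrix $\{f_\ell\}\to\{e_m\}$ is triangular with nonvanishing diagonal, and $\{f_\ell\}$ is indeed a basis.

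For (ii), the key observation is that expanding the terminating ${}_4F_3$ defining $g_k(t)$ one reads off directly
$$
g_k(t)=\sum_{j=0}^{k}\frac{(-k)_j\,(k+2\epsi)_j\,(L)_j\,(L+a)_j}{(a+1)_j\, j!}\,f_j(t),
$$
so the transition matrix $\{g_k\}\to\{f_\ell\}$ is triangular with diagonal entry
$$
(g_k:f_k)=\frac{(-k)_k(k+2\epsi)_k(L)_k(L+a)_k}{(a+1)_k\, k!},
$$
which is nonzero under $\epsi\ge0$, $L\ge2$, $a>-1$. Composing this with the triangular transition obtained in (i) shows that $\{g_k\}\to\{e_m\}$ is triangular as well. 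Because each of these triangular transition matrices has nonzero diagonal, their inverses are triangular too, so the transitions in the reverse directions (e.g.\ $\{e_m\}\to\{g_k\}$, $\{f_\ell\}\to\{g_k\}$) are also triangular. This covers all six transition matrices.

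The only real bookkeeping is confirming non-vanishing of the diagonal coefficients in the two triangular expansions above; for the $f_\ell\to e_m$ direction this follows from the explicit residue formula, and for $g_k\to f_\ell$ it reduces to checking that each of $(k+2\epsi)_k$, $(L)_k$, $(L+a)_k$, $(a+1)_k$ is nonzero, which is immediate from our standing assumptions $\epsi\ge0$, $L\ge2$, $a>-1$. No other obstacle arises: the pole inclusions $\{\text{poles of }g_k\}\subseteq\{\text{poles of }f_k\}\subseteq\{\pm A_1,\dots,\pm A_k\}$ give the compatible filtration on $\FF(\epsi,L)$ that drives everything.
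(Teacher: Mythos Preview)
Your proof is correct and follows essentially the same approach as the paper: both arguments read off the pole structure of $f_\ell$ to get triangularity with respect to $\{e_m\}$, and then deduce the remaining triangularities. The only minor difference is that you make the argument self-contained by explicitly writing out the ${}_4F_3$ expansion $g_k=\sum_{j\le k}c_{k,j}f_j$ (which the paper records separately in \eqref{eq5.D}) and by computing the diagonal residue $(f_\ell:e_\ell)$ explicitly, whereas the paper leaves the nonvanishing of the diagonal as an evident consequence of the pole being genuinely present and invokes the already-established triangularity of $\{g_k\}$ versus $\{e_m\}$ from the proof of Proposition~\ref{prop5.A}.
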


\begin{proof}
Note that $f_0=1$. Next, if $\ell\ge1$, then the function $f_\ell(t)$ vanishes at infinity and its singularities are precisely simple poles at the points $\pm A_m$ with $m=1,\dots,\ell$. It follows that the functions $f_\ell$ lie in the space $\FF(\epsi,L)$. The same argument also shows that the transition coefficients $(f_\ell:e_m)$ vanish unless $m\le\ell$. Moreover, $(f_\ell:e_m)\ne0$ for $m=\ell$. This means that $\{f_\ell\}$ is a basis and the transition matrix between $\{f_\ell\}$ and $\{e_m\}$ is triangular. This in turn implies that all transition matrices in question are triangular, too. 
\end{proof}

We write $(\phi:f_\ell)$ for the coefficients of the expansion of a function $\phi\in\FF(\epsi,L)$ in the basis $\{f_\ell\}$. From Lemma \ref{lemma5.C} we have 
\begin{equation}\label{5.C}
(e_m:g_k)=\sum_{\ell=k}^m(e_m:f_\ell)(f_\ell:g_k), \quad m\ge k. 
\end{equation}
The purpose of the two next  lemmas is to compute the coefficients $(e_m:f_\ell)$ and $(f_\ell:g_k)$.

\begin{lemma}\label{lemma5.A}
Let $m\in\Z_{\ge1}$. 

{\rm(i)} We have 
\begin{equation}
(e_m:f_0)=0.
\end{equation}

{\rm(ii)} For $\ell\ge1$, we have
\begin{gather*}
(e_m:f_\ell)=2(-1)^{\ell}(L+\epsi+m-1)\prod_{j=1}^{\ell-1}(2L+2\epsi+m+j-2)(m-j).
\end{gather*}
\end{lemma}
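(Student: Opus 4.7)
My plan is to treat the two parts separately, and in both cases work in the squared variable $u=t^2$ with $B_j:=A_j^2=(L+\epsi+j-1)^2$. In this variable, $e_m(u)=\tfrac{2A_m}{u-B_m}$ and $f_\ell(u)=\prod_{j=1}^\ell(B_j-u)^{-1}$. Throughout, I will exploit the triangularity of the transition matrix established in Lemma \ref{lemma5.C}, which ensures that the expansion $e_m=\sum_{\ell\ge 0}(e_m:f_\ell)f_\ell$ is in fact a finite sum, truncated at $\ell=m$.

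Part (i) is an evaluation at infinity. For $m\ge 1$, the function $e_m$ vanishes at $t=\infty$, while $f_0\equiv 1$ and $f_\ell(\infty)=0$ for $\ell\ge 1$. Evaluating the finite expansion at $t=\infty$ therefore gives $0=(e_m:f_0)$.

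For part (ii), note the factorization $B_m-B_j=(m-j)(2L+2\epsi+m+j-2)$, which allows me to rephrase the claim as the identity
$$
e_m(u)=\sum_{\ell=1}^{m}c_\ell\,f_\ell(u),\qquad c_\ell:=(-1)^\ell\,2A_m\prod_{j=1}^{\ell-1}(B_m-B_j).
$$
I would prove this by a telescoping induction on the partial remainders $T_\ell(u):=e_m(u)-\sum_{j=1}^{\ell}c_j f_j(u)$, establishing the closed form
$$
T_\ell(u)=\frac{(-1)^\ell\,2A_m\prod_{k=1}^{\ell}(B_m-B_k)}{(u-B_m)\prod_{k=1}^{\ell}(B_k-u)}.
$$
The base case $\ell=0$ is immediate (empty products). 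The inductive step follows by bringing $T_{\ell-1}-c_\ell f_\ell$ over the common denominator $(u-B_m)\prod_{k=1}^{\ell}(B_k-u)$ and applying the elementary identity
$$
\frac{1}{u-B_m}+\frac{1}{B_\ell-u}=\frac{-(B_m-B_\ell)}{(u-B_m)(B_\ell-u)},
$$
which supplies exactly the new factor $(B_m-B_\ell)$ needed to pass from $\ell-1$ to $\ell$, along with the sign flip from $(-1)^{\ell-1}$ to $(-1)^\ell$. Setting $\ell=m$ makes the numerator vanish because of the factor $B_m-B_m=0$, so $T_m\equiv 0$, which establishes the expansion and hence $(e_m:f_\ell)=c_\ell$ for $1\le\ell\le m$.

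The main bookkeeping hurdle is sign reconciliation in the inductive step, since three sign sources interact: $(-1)^\ell$ in $c_\ell$, the explicit $(-1)^\ell$ in the target formula for $T_\ell$, and the sign $-(B_m-B_\ell)=B_\ell-B_m$ coming from the fraction identity above. Once these are tracked carefully, the argument is entirely algebraic and self-contained, and the resulting formula matches the stated one after substituting the factorization of $B_m-B_j$ noted above.
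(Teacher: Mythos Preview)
Your proof is correct and follows essentially the same approach as the paper. Both arguments reduce to the elementary partial-fraction identity $\dfrac{1}{z-a_m}=\sum_{\ell=1}^{m}\dfrac{\prod_{j=1}^{\ell-1}(a_m-a_j)}{\prod_{j=1}^{\ell}(z-a_j)}$ (with $z=t^2$ and $a_j=B_j$); the paper states this identity abstractly and proves it by induction on $m$, whereas you work directly with the specialized functions and prove it by induction on $\ell$ via telescoping remainders --- a cosmetic difference only.
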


Note that the triangularity property is ensured by the product $\prod_{j=1}^{\ell-1}(m-j)$. 

\begin{proof}
(i) The functions $e_m(t)$ and $f_\ell(t)$ with nonzero indices vanish at $t=\infty$. This implies (i). 

(ii) Let $z$ and $a_1,a_2,\dots$ be formal variables; the next identity is easily proved by induction on $m$:
$$
\frac1{z-a_m}=\frac1{z-a_1}+\frac{a_m-a_1}{(z-a_1)(z-a_2)}+\dots+\frac{(a_m-a_1)\dots(a_m-a_{m-1})}{(z-a_1)\dots(z-a_m)}.
$$
That is, the coefficients in the expansion are 
\begin{equation}\label{5.A.1}
\left(\frac1{z-a_m}:\frac1{(z-a_1)\dots(z-a_\ell)}\right)=\prod_{j=1}^{\ell-1}(a_m-a_j), \qquad m=1,2,\dots\,.
\end{equation}

Observe now that
$$
e_m(t)=\frac{2(L+\epsi+m-1)}{t^2-(L+\epsi+m-1)^2}, \qquad m=1,2,\dots,
$$
and
$$
f_\ell(t)=\frac{(-1)^\ell}{(t^2-(L+\epsi)^2)\dots(t^2-(L+\epsi+\ell-1)^2)}.
$$
So we set
$$
z=t^2, \qquad a_m=(L+\epsi+m-1)^2
$$
and apply \eqref{5.A.1}. This proves (ii). 
\end{proof}

\begin{lemma}\label{lemma5.B}
The following formula holds.
\begin{equation*}
(f_\ell:g_k)=\frac{2(k+\epsi)\Ga(a+\ell+1)\Ga(k+2\epsi)(-\ell)_k}{(L)_\ell(L+a)_\ell\Ga(a+1)\Ga(k+2\epsi+\ell+1)k!}.
\end{equation*}
\end{lemma}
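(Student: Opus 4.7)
The plan is to recognize the identity as an explicit matrix inversion. Expanding the ${}_4F_3$ series in the definition of $g_k(t)$ termwise, each summand produces one factor $f_m(t) = 1/[(-t+L+\epsi)_m(t+L+\epsi)_m]$, giving the explicit lower-triangular expansion
$$g_k(t)=\sum_{m=0}^k T_{km}\,f_m(t),\qquad T_{km}:=\frac{(-k)_m(k+2\epsi)_m(L)_m(L+a)_m}{(a+1)_m\,m!}.$$
By Lemma \ref{lemma5.C}, both $\{f_m\}$ and $\{g_k\}$ are bases of $\FF(\epsi,L)$, so the coefficients $(f_\ell:g_k)$ we seek are precisely the entries of $T^{-1}$. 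Thus it suffices to verify the inversion identity $\sum_{k=m}^{\ell}(f_\ell:g_k)\,T_{km}=\de_{\ell m}$, where the proposed formula from the lemma is plugged in on the left.

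Substituting both formulas, cancelling all factors independent of $k$, reindexing by $j:=k-m$, and simplifying with $(-k)_m=(-1)^m(j+1)_m$, $k!=(m+j)!$, and $\Ga(k+2\epsi)(k+2\epsi)_m=\Ga(2m+j+2\epsi)$, the identity reduces to
$$\sum_{j=0}^{\ell-m}\frac{(m+j+\epsi)(2m+2\epsi)_j(m-\ell)_j}{(m+2\epsi+\ell+1)_j\,j!}=0\qquad(\ell>m),$$
while the diagonal case $\ell=m$ reduces directly to $1$ using $\Ga(2m+2\epsi+1)=(2m+2\epsi)\Ga(2m+2\epsi)$ and $\Ga(a+m+1)/\Ga(a+1)=(a+1)_m$. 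So the proof splits cleanly into a (trivial) diagonal check and the off-diagonal vanishing above.

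To prove the off-diagonal vanishing, split $(m+j+\epsi)=(m+\epsi)+j$. The constant piece becomes $(m+\epsi)\,{}_2F_1(2m+2\epsi,\,m-\ell;\,m+2\epsi+\ell+1;\,1)$, which the Chu-Vandermonde summation (Bailey \cite[\S1.7]{Bailey}) evaluates to $(m+\epsi)(\ell-m+1)_{\ell-m}/(m+2\epsi+\ell+1)_{\ell-m}$. The $j$-piece, after using $j(2m+2\epsi)_j(m-\ell)_j/j!=(2m+2\epsi)(m-\ell)(2m+2\epsi+1)_{j-1}(m-\ell+1)_{j-1}/(j-1)!$, becomes $(2m+2\epsi)(m-\ell)/(m+2\epsi+\ell+1)$ times ${}_2F_1(2m+2\epsi+1,\,m-\ell+1;\,m+2\epsi+\ell+2;\,1)$; Chu-Vandermonde again gives $(\ell-m+1)_{\ell-m-1}/(m+2\epsi+\ell+2)_{\ell-m-1}$ for this latter factor. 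The elementary Pochhammer identities $(\ell-m+1)_{\ell-m}=2(\ell-m)(\ell-m+1)_{\ell-m-1}$ and $(m+2\epsi+\ell+1)_{\ell-m}=(m+2\epsi+\ell+1)(m+2\epsi+\ell+2)_{\ell-m-1}$ then show the two contributions have equal magnitudes and opposite signs, producing the required cancellation.

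The main obstacle is the Pochhammer and Gamma bookkeeping: one must verify that the normalizing factors pulled out of the sum recombine to exactly $\de_{\ell m}$ and that the two Chu-Vandermonde outputs align so as to cancel. Once the split $(m+j+\epsi)=(m+\epsi)+j$ is made, however, the remaining manipulations are elementary. A sanity check for $\ell\in\{0,1\}$, where $f_1=\frac{a+1}{(2\epsi+1)L(L+a)}(1-g_1)$ can be read off directly from the $k=1$ case of the definition of $g_k$, confirms the normalization.
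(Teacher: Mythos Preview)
Your proof is correct. Both you and the paper begin from the same starting point: the termwise expansion $g_k=\sum_m T_{km}f_m$ with $T_{km}=\frac{(-k)_m(k+2\epsi)_m(L)_m(L+a)_m}{(a+1)_m m!}$. From there the approaches diverge. The paper observes that, after rescaling $\wt f_\ell=(L)_\ell(L+a)_\ell f_\ell$, the coefficients $T_{km}$ coincide with the classical coefficients expressing the normalized Jacobi polynomials $\wt P^{(a,b)}_k(x)$ in powers of $\frac{1-x}{2}$; therefore the inverse coefficients $(f_\ell:g_k)$ must coincide with the handbook expansion of $\bigl(\frac{1-x}{2}\bigr)^\ell$ in Jacobi polynomials (obtained via the Rodrigues formula), which is then quoted from the literature. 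Your route is instead to verify the matrix inversion $\sum_k (f_\ell:g_k)T_{km}=\de_{\ell m}$ directly, splitting the factor $(m+j+\epsi)=(m+\epsi)+j$ and evaluating each piece by the Chu--Vandermonde summation; the two pieces cancel for $\ell>m$, and the diagonal collapses to $1$. Your argument is more hands-on but fully self-contained, requiring nothing beyond Chu--Vandermonde; the paper's argument is shorter and conceptually ties $g_k$ to the Jacobi polynomials, but relies on an external reference for the inverse expansion. In effect, your computation amounts to a direct proof of the classical identity \cite[5.12.2.1]{Brychkov} that the paper invokes.
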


Note that the triangularity property is ensured by the factor $(-\ell)_k$. 

\begin{proof}
The functions $g_k(t)$ can be written in the form
\begin{equation}\label{eq5.D}
g_k(t):=\sum_{\ell=0}^k\frac{(-k)_\ell(k+2\epsi)_\ell}{(a+1)_\ell \ell!}\,\wt f_\ell(t), \quad k\in\Z_{\ge0},
\end{equation}
where
$$
\wt f_\ell(t):=(L)_\ell(L+a)_\ell f_\ell(t).
$$
Compare \eqref{eq5.D} with the well-known formula for the Jacobi polynomials (Erdelyi \cite[section 10.8]{Erdelyi})
$$
\wt P^{(a,b)}_k(x):=\frac{\Ga(a+1)k!}{\Ga(k+a+1)}\,P^{(a,b)}_k(x)=\sum_{\ell=0}^k\frac{(-k)_\ell(k+2\epsi)_\ell}{(a+1)_\ell \ell!}\left(\frac{1-x}2\right)^\ell.
$$
The coefficients in these two expansions are the same, which implies that the desired coefficients $(\wt f_\ell:g_k)$ coincide with the coefficients in the expansion of $(\frac12(1-x))^\ell$ in the polynomials $\wt P^{(a,b)}_k(x)$. This expansion can be easily derived using the Rodrigues formula for the Jacobi polynomials:  
\begin{gather}
\left(\frac{1-x}2\right)^\ell=\sum_{k=0}^\ell\frac{2(k+\epsi)\Ga(a+\ell+1)\Ga(k+2\epsi)(-\ell)_k}{\Ga(k+a+1)\Ga(k+\ell+2\epsi+1)}\,P^{(a,b)}_k(x)\label{5.B.1}\\
=\sum_{k=0}^\ell\frac{2(k+\epsi)\Ga(a+\ell+1)\Ga(k+2\epsi)(-\ell)_k}{\Ga(a+1)\Ga(k+\ell+2\epsi+1)k!}\,\left(\frac{\Ga(a+1)k!}{\Ga(k+a+1)}\,P^{(a,b)}_k(x)\right).\label{5.B.2}
\end{gather}
Formula \eqref{5.B.1} can be found in handbooks, see \cite[5.12.2.1]{Brychkov} (in the Russian edition (2006), it is 5.11.2.5) and \cite[10.20 (3)]{Erdelyi} (but note that the expression in the latter reference contains a typo: namely,  the factor $\Ga(2n+\al+\be+1)$ should be replaced by $2n+\al+\be+1$).

From \eqref{5.B.2} we obtain
$$
(f_\ell:g_k)=\dfrac{(\wt f_\ell:g_k)}{(L)_\ell(L+a)_\ell}=\frac{2(k+\epsi)\Ga(a+\ell+1)\Ga(k+2\epsi)(-\ell)_k}{(L)_\ell(L+a)_\ell\Ga(a+1)\Ga(k+\ell+2\epsi+1)k!},
$$
which is the desired result. 
\end{proof}

\begin{proof}[Proof of Theorem \ref{thm5.A}]

(i) We suppose $m\ge1$ and $k\ge 1$, and set $\ell=k+n$. 

Let us rewrite the formulas of Lemmas \ref{lemma5.A} and \ref{lemma5.B}:
\begin{gather*}
(e_m:f_\ell)=2(-1)^{\ell}(L+\epsi+m-1)\prod_{j=1}^{\ell-1}(2L+2\epsi+m+j-2)(m-j)\\
=2(-1)^{k}(L+\epsi+m-1)(2L+2\epsi+m-1)_{k-1}\frac{(m-1)!}{(m-k)!}\\
\times(2L+2\epsi+m+k-2)_n(k-m)_n
\end{gather*}
and
\begin{gather*}
(f_\ell:g_k)=\frac{2(k+\epsi)\Ga(a+\ell+1)\Ga(k+2\epsi)(-\ell)_k}{(L)_\ell(L+a)_\ell\Ga(k+a+1)\Ga(k+2\epsi+\ell+1)}\notag\\
=\frac{\Ga(k+2\epsi)(-1)^k(a+1)_k}{(L)_k(L+a)_k\Ga(2k+2\epsi)}\\
\times\frac{(a+k+1)_n(k+1)_n}{(L+k)_n(L+a+k)_n(2k+2\epsi+1)_nn!}
\end{gather*}

Next, put aside the factors that do not depend on $n$ and then take the sum over $n=0,\dots,m-k$, which corresponds to the summation over $\ell=k,\dots,m$ in \eqref{5.C}.  It gives
\begin{gather*}
\sum_{n=0}^{m-k}\frac{(a+k+1)_n(k+1)_n(2L+2\epsi+m+k-2)_n(k-m)_n}{(L+k)_n(L+a+k)_n(2k+2\epsi+1)_nn!}\\
={}_4F_3\left[\begin{matrix}  k-m,\; k+1,\; k+a+1,\; 2L+2\epsi+m+k-2
\\ L+k,\; L+a+k,\; 2k+2\epsi+1\end{matrix}\;\Biggl|\;1\right].
\end{gather*}
Taking into account the remaining factors gives the desired expression.

(ii) We suppose $m\ge1$ and $k=0$. The computation is similar to the previous one. Since $(e_m:f_0)=0$, we have
$$
(e_m:g_0)=\sum_{\ell=1}^m(e_m:f_\ell)(f_\ell:g_0).
$$
It is convenient to set $n:=\ell-1$, so that $n$ ranges from $0$ to $m-1$. The lemmas show that
$$
(e_m:f_{n+1})=-2(L+m-1)(2L+2\epsi)_n(1-m)_n
$$
and
\begin{gather*}
(f_{n+1}:g_0)=\frac{2\epsi\Ga(2\epsi)\Ga(a+n+2)}{(L)_{n+1}(L+a)_{n+1})\Ga(a+1)\Ga(2\epsi+n+2)}\\
=\frac{a+1}{L(L+a)(2\epsi+1)}\cdot\frac{(a+2)_n}{(L+1)_n(L+a+1)_n(2\epsi+2)_n}.
\end{gather*}
It follows
$$
(e_m:g_0)=-\frac{2(L+\epsi+m-1)(a+1)}{L(L+a)(2\epsi+1)}
\sum_{n=0}^{m-1}\frac{(1-m)_n(2L+2\epsi+m-1)_n(a+2)_n}{(L+1)_n(L+a+1)_n(2\epsi+2)_n},
$$
which is the desired expression.
\end{proof}

\section{Elementary expression of functions $g_k$ in the case of symplectic and orthogonal characters}\label{sect6}

Recall that we are dealing with the functions defined by \eqref{eq1.g_k}:
\begin{equation*}
g_k(t;a,\epsi,L):={}_4F_3\left[\begin{matrix}-k,\, k+2\epsi,\, L,\, L+a\\-t+L+\epsi,\,
t+L+\epsi,\, a+1\end{matrix}\,\Biggl|\,1\right], \qquad k=0,1,2,\dots\,.
\end{equation*}

In this section we consider the three special cases
$$
(a,\epsi)= (\tfrac12,1), \; (\tfrac12, \tfrac12), \; (-\tfrac12,0),
$$
which correspond to the series $\CC$, $\BB$, $\DD$, and we introduce the alternate notation
\begin{gather*}
g_k^{(\CC)}(t;L):=g_k(t; \tfrac12, 1,L)={}_4F_3\left[\begin{matrix}-k,\, k+2,\, L,\, L+\tfrac12\\-t+L+1,\,
t+L+1,\, \tfrac32\end{matrix}\,\Biggl|\,1\right], \\
g_k^{(\BB)}(t;L):=g_k(t; \tfrac12, \tfrac12,L)={}_4F_3\left[\begin{matrix}-k,\, k+1,\, L,\, L+\tfrac12\\-t+L+\tfrac12,\,
t+L+\tfrac12,\, \tfrac32\end{matrix}\,\Biggl|\,1\right], \\
g_k^{(\DD)}(t;L):=g_k(t; -\tfrac12, 0,L)={}_4F_3\left[\begin{matrix}-k,\, k,\, L,\, L-\tfrac12\\-t+L,\,
t+L,\, \tfrac12\end{matrix}\,\Biggl|\,1\right], 
\end{gather*}

\begin{theorem}\label{thm6.A}
These three hypergeometric series admit closed elementary expressions{\rm:}  

\begin{equation}\label{eq6.F1}
g^{(\CC)}_k(t;L)
=\frac1{2(k+1)(1-2L)t}\left[\frac{(t-L)_{k+2}}{(t+L+1)_k}
-\frac{(-t-L)_{k+2}}{(-t+L+1)_k}\right].
\end{equation}

\begin{equation}\label{eq6.F2}
g^{(\BB)}_k(t;L)
=\frac1{2(k+\tfrac12)(1-2L)}\left[\frac{(t-L+\frac12)_{k+1}}{(t+L+\frac12)_k}
+\frac{(-t-L+\frac12)_{k+1}}{(-t+L+\frac12)_k}\right].
\end{equation}

\begin{equation}\label{eq6.F3}
g^{(\DD)}_k(t;L)
=\frac12\left[\frac{(t-L+1)_k}{(t+L)_k}+\frac{(-t-L+1)_k}{(-t+L)_k}\right].
\end{equation}
\end{theorem}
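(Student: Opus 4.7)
Each of the three identities asserts that a terminating Saalschützian ${}_4F_3(1)$ equals an elementary rational function of $t$. My strategy is to recognize both sides as even rational functions of $t$ and establish equality by matching their pole data and behavior at infinity. From its series definition, $g_k(t;a,\epsi,L)$ is regular at $\infty$ with $g_k(\infty)=1$, and has only simple poles at $t=\pm(L+\epsi+m-1)$, $m=1,\dots,k$. A direct inspection shows that each proposed right-hand side has the same pole set and the same limit at infinity: for example in the $\DD$ case $(t-L+1)_k/(t+L)_k$ has poles only at $t=-(L+j)$, $j=0,\dots,k-1$, tends to $1$ as $t\to\infty$, and its $t\mapsto -t$ partner supplies the symmetric positive poles; the prefactors $1/[2(k+1)(1-2L)t]$ and $1/[2(k+\tfrac12)(1-2L)]$ in \eqref{eq6.F1} and \eqref{eq6.F2} are arranged precisely so that the combined expressions are even rational functions equal to $1$ at infinity.

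\textbf{Residue comparison.} By the $t\mapsto -t$ symmetry it suffices to match the residues at the positive poles $t_*:=L+\epsi+m-1$ for $m=1,\dots,k$. On the LHS, only the factor $(-t+L+\epsi)_j^{-1}$ of the $j$-th term in the defining ${}_4F_3(1)$ is singular at $t_*$, producing a simple pole once $j\ge m$. Substituting $j=m+n$ and simplifying the resulting Pochhammer shifts expresses $\Res_{t=t_*}g_k(t)$ as an explicit prefactor times a terminating hypergeometric sum in $n$. The residue of the RHS at $t_*$ is read off directly from the explicit formula as a short product of Pochhammer symbols.

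\textbf{Closed-form collapse in the three special cases.} In each of the three parameter cases $(a,\epsi)\in\{(\tfrac12,1),(\tfrac12,\tfrac12),(-\tfrac12,0)\}$, the Pochhammer pair $(L)_j(L+a)_j$ consists of two entries differing by $\tfrac12$, so Gauss's duplication formula applies and couples these two numerator factors with the denominator $(\tfrac12)_j=(2j)!/(4^j j!)$ (or its shift $(\tfrac32)_j$ for series $\CC$, $\BB$) and with the factorial $j!$. The net effect is to reduce the residue sum from the previous step to a terminating Saalschützian ${}_3F_2(1)$, which is summable by the Pfaff--Saalschütz theorem to a product of Pochhammer symbols. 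That product is then identified, by straightforward algebra, with the residue of the RHS. Since both sides are even rational functions with the same simple poles, the same residues, and the same value $1$ at infinity, they coincide as rational functions of $t$.

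\textbf{Main obstacle.} The principal technical point is the residue collapse: isolating the precise duplication that turns the ${}_4F_3(1)$ arising from the residue computation into a Pfaff--Saalschütz-summable ${}_3F_2(1)$, and then matching the resulting elementary product with the RHS residue. The three cases run in parallel but involve different half-integer shifts of the parameters, so the Pochhammer bookkeeping must be performed case by case; this is the chief source of computational error, though the underlying mechanism is uniform across $\CC$, $\BB$, $\DD$.
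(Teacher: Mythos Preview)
Your framework is sound and genuinely different from the paper's argument: both sides are even rational functions of $t$ with only simple poles at $\pm A_1,\dots,\pm A_k$ and value $1$ at infinity, so equality does follow from matching residues at the positive poles. The paper does \emph{not} proceed this way; it first reverses the order of summation in the defining ${}_4F_3$, then invokes a quadratic-transformation identity of Krattenthaler--Rao type that rewrites the resulting ${}_4F_3$ as a ${}_3F_2$, carefully cancels a repeated upper/lower parameter (with a delicate correction of the last term), and finally applies Chu--Vandermonde to the remaining ${}_2F_1$.

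There is, however, a real gap in your ``closed-form collapse'' step. A direct computation shows that
\[
\Res_{t=A_m}g_k(t)\;=\;(\text{elementary prefactor})\times
{}_4F_3\!\left[\begin{matrix} m-k,\; k+2\epsi+m,\; L+m,\; L+a+m\\ a+m+1,\; m+1,\; 2L+2\epsi+2m-1\end{matrix}\,\bigg|\,1\right],
\]
a terminating balanced ${}_4F_3(1)$. In the three special cases the pairs $(L+m,\,L+a+m)$ and $(a+m+1,\,m+1)$ do differ by $\tfrac12$, but applying Gauss duplication to both merely produces factors $(2L+2m+\cdots)_{2n}$ and $(2m+\cdots)_{2n}$ with doubled index; it does \emph{not} yield a genuine ${}_3F_2$ in $n$, and Pfaff--Saalsch\"utz is not applicable. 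What \emph{does} sum this series is a special ${}_4F_3$ evaluation due to Slater (the identity the paper records as Lemma~\ref{lemma7.A}): for each of $\CC,\BB,\DD$ the parameters above match the pattern $(-n,\,A,\,A+\tfrac12,\,2B+n)$ over $(B,\,B+\tfrac12,\,2A+1)$ or its companion form, and the sum collapses to a ratio of Pochhammer symbols. So your residue-matching route can be completed, but the key identity you need is Slater's ${}_4F_3$ summation, not Pfaff--Saalsch\"utz for ${}_3F_2$; as stated, your plan would stall at exactly this point.
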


\begin{remark} In the very beginning of this work I expected that the summation formulas for the series $g^{(\CC)}_k(t;L)$, $g^{(\BB)}_k(t;L)$, and $g^{(\DD)}_k(t;L)$ should exist, so I first tried to find them in the literature. As a result of the search I managed to find the first formula in the handbook \cite{PBM} (p. 556 of the Russian original, eq. 7.5.1 (42)). Unfortunately, the handbook did not provide any proof or a suitable reference. Then I asked Eric Rains, and he kindly communicated to me a unified derivation of all three formulas in the letter \cite{Rains}. I am very grateful to him for this help. I reproduce below his argument in a more detailed form, but with a different proof for the next lemma. 
\end{remark}

\begin{lemma}
{\rm(i)} Suppose $A$ is an even nonpositive integer. Then
\begin{equation}\label{eq6.C1}
{}_3F_2\left[\begin{matrix}A,\, B,\, D\\ \half(A+B+1), \, E \end{matrix}\,\Biggl|\,1\right]=
{}_4F_3\left[\begin{matrix}\frac12 A,\, \half B,\, E-D, \, D\\ \half(A+B+1), \, \half E, \half(E+1) \end{matrix}\Biggl|1\right].
\end{equation}

{\rm(ii)} Suppose $A$ is an odd negative integer. Then
\begin{equation}\label{eq6.C2}
{}_3F_2\left[\begin{matrix}A,\, B,\, D\\ \half(A+B+1), \, E \end{matrix}\,\Biggl|\,1\right]=
\frac{E-2D}E\,{}_4F_3\left[\begin{matrix}\half(A+1),\, \half(B+1),\, E-D, \, D\\ \half(A+B+1), \, \half(E+1), \half E +1 \end{matrix}\,\Biggl|\,1\right].
\end{equation}
\end{lemma}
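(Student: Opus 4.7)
The plan is to reduce both identities to known \emph{quadratic transformations} of the Gauss hypergeometric function by writing the ${}_3F_2$ on the left as a Beta-weighted integral of a ${}_2F_1$. Recall the Euler-type representation
\begin{equation*}
{}_3F_2\!\left[\begin{matrix}A, B, D\\ \half(A+B+1), E\end{matrix}; 1\right]
=\frac{\Ga(E)}{\Ga(D)\Ga(E-D)}\int_0^1 x^{D-1}(1-x)^{E-D-1}\,{}_2F_1\!\left[\begin{matrix}A, B\\ \half(A+B+1)\end{matrix}; x\right] dx,
\end{equation*}
which is valid for $\Re E>\Re D>0$; since $A$ is a nonpositive integer, the integrand is a polynomial in $x$ and the eventual identity is polynomial in $B,D,E$, so the restriction on $D,E$ can be removed by analytic continuation.

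The next step is to invoke the Kummer--Gauss quadratic transformations whose defining feature is precisely that the lower ${}_2F_1$ parameter is $\half(a+b+1)$:
\begin{equation*}
{}_2F_1\!\left[\begin{matrix}a, b\\ \half(a+b+1)\end{matrix}; x\right]
={}_2F_1\!\left[\begin{matrix}\half a, \half b\\ \half(a+b+1)\end{matrix}; 4x(1-x)\right],
\end{equation*}
\begin{equation*}
{}_2F_1\!\left[\begin{matrix}a, b\\ \half(a+b+1)\end{matrix}; x\right]
=(1-2x)\,{}_2F_1\!\left[\begin{matrix}\half(a+1), \half(b+1)\\ \half(a+b+1)\end{matrix}; 4x(1-x)\right].
\end{equation*}
For case (i), where $A$ is an even nonpositive integer, I use the first transformation, which becomes a \emph{polynomial} identity because $\half A$ is a nonpositive integer and both sides terminate (this bypasses the usual convergence issue at $x=\half$). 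For case (ii), where $A$ is an odd negative integer, I use the second transformation, which for the same reason is polynomial (now $\half(A+1)$ is a nonpositive integer).

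Substituting the transformed ${}_2F_1$ into the integral and expanding as a power series in $4x(1-x)$ reduces everything to the Beta integrals
\begin{equation*}
\int_0^1 x^{D+m-1}(1-x)^{E-D+m-1}dx=\frac{\Ga(D+m)\Ga(E-D+m)}{\Ga(E+2m)},
\end{equation*}
and, in case (ii), the modified integral
\begin{equation*}
\int_0^1 x^{D+m-1}(1-x)^{E-D+m-1}(1-2x)dx=\frac{\Ga(D+m)\Ga(E-D+m)}{\Ga(E+2m)}\cdot\frac{E-2D}{E+2m},
\end{equation*}
obtained by splitting the $(1-2x)$ factor into two Beta integrals. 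The extra rational factor $(E-2D)/(E+2m)$ is the source of the prefactor $(E-2D)/E$ in the statement of (ii), once we isolate the $m$-independent piece.

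The final step is to repackage the resulting series using the Legendre duplication formula
\begin{equation*}
(E)_{2m}=4^m(\tfrac12 E)_m(\tfrac12(E+1))_m, \qquad (E+1)_{2m}=4^m(\tfrac12(E+1))_m(\tfrac12 E+1)_m,
\end{equation*}
which absorbs the $4^m$ coming from the quadratic argument $4x(1-x)$ and converts the single Pochhammer symbol $(E+\text{offset})_{2m}$ into the product of two shifted Pochhammer symbols appearing in the denominator of the desired ${}_4F_3$. Collecting the factors $(D)_m=\Ga(D+m)/\Ga(D)$ and $(E-D)_m=\Ga(E-D+m)/\Ga(E-D)$ produces exactly the right-hand sides of (i) and (ii). The only genuinely non-routine step is recognizing which of the two quadratic transformations to invoke according to the parity of $A$, and then keeping track of the $(1-2x)$ factor in the odd case; once these are in place the rest is formal manipulation of Pochhammer symbols.
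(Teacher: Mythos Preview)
Your proposal is correct and follows essentially the same route as the paper: the paper cites this as the ``beta integral method'' of Krattenthaler--Rao, namely integrating the two quadratic transformations of ${}_2F_1$ against the Beta kernel $z^{D-1}(1-z)^{E-D-1}$, which is exactly your Euler-type representation argument. The only cosmetic difference is that in case~(ii) the paper handles the extra factor by writing $1-2z=(1-z)-z$ and splitting into two Beta integrals, whereas you evaluate the modified Beta integral directly; the outcome is the same.
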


\begin{proof}[Proof of the lemma]
(i) This is formula (3.6) in Krattenthaler--Rao \cite{KR}. As explained there, it is obtained from the Gauss' quadratic transformation formula \cite[section 2.11, eq. (2)]{Erdelyi}
\begin{equation}\label{eq6.A}
{}_2F_1\left[\begin{matrix}A,\, B\\ \half(A+B+1) \end{matrix}\,\Biggl|\,z\right]
={}_2F_1\left[\begin{matrix}\half A,\, \half B\\ \half(A+B+1) \end{matrix}\,\Biggl|\,4z(1-z)\right]
\end{equation}
by the following simple procedure: (1) convert the hypergeometric series on both sides into (finite) sums;
(2) multiply both sides of the equation by the factors $z^{d-1}(1-z)^{e-d-1}$;
(3) integrate term by term with respect to $z$ for $0\le z\le 1$;
(4) interchange integration and summation;
(5) use the beta integral to evaluate the integrals inside the summations;
(6) convert the sums back into hypergeometric notation.

(ii) We replace  \eqref{eq6.A} with another quadratic transformation formula \cite[section 2.11, eq. 19]{Erdelyi}:
\begin{equation}\label{eq6.B}
{}_2F_1\left[\begin{matrix}A,\, B\\ \half(A+B+1) \end{matrix}\,\Biggl|\,z\right]
=(1-2z)\,{}_2F_1\left[\begin{matrix}\half(A+1),\, \half(B+1)\\ \half(A+B+1) \end{matrix}\,\Biggl|\,4z(1-z)\right],
\end{equation}
write $1-2z=(1-z)-z$, and apply the same procedure.  

Note also that \eqref{eq6.B} can be easily obtained from \eqref{eq6.A} by differentiating over $z$. 
\end{proof}

\begin{proof}[Proof of Theorem \ref{thm6.A}]
We apply  the well-known transformation formula
\begin{multline*}
{}_4F_3\left[\begin{matrix}-k, \al_1,\al_2,\al_3\\\be_1,\be_2,\be_3 \end{matrix}\,\Biggl|\,1\right]
=\frac{(-1)^k(\al_1)_k(\al_2)_k(\al_3)_k}{(\be_1)_k(\be_2)_k(\be_3)_k}\\
\times {}_4F_3\left[\begin{matrix}-k, \; -\be_1-k+1,\; -\be_2-k+1,\; -\be_3-k+1\\ -\al_1-k+1,\; -\al_2-k+1,\; -\al_3-k+1 \end{matrix}\,\Biggl|\,1\right],
\end{multline*}
which is obtained by summing a terminating series in the reverse order.  This gives 
\begin{align}
g_k(t;a,\epsi,L)&=\frac{(-1)^k(k+2\epsi)_k(L)_k(L+a)_k}{(-t+L+\epsi)_k(t+L+\epsi)_k(a+1)_k} \label{eq6.D1}\\
&\times {}_4F_3\left[\begin{matrix}-k,\; -k-a,\; t-L-\epsi-k+1,\; -t-L-\epsi-k+1\\
-2k-2\epsi+1,\;  -L-k+1,\;  -L-k-a+1\end{matrix}\,\Biggl|\,1\right].\label{eq6.D2}
\end{align}
In the three special cases under consideration the hypergeometric series on the right-hand side has the same form as in \eqref{eq6.C1} or \eqref{eq6.C2}. However, the application of these formulas requires some caution, as it will soon become clear. 
Let us examine the three cases separately.  

Case ($\CC$): $a=\frac12$, $\epsi=1$. The ${}_4 F_3$ in \eqref{eq6.D2} has the same form as on the right-hand side of \eqref{eq6.C2}, with the parameters
\begin{equation*}
A=-2k-1, \quad B=-2k-2, \quad D=-t-L-k, \quad E=-2L-2k,
\end{equation*}
so that the corresponding ${}_3 F_2$ on the left-hand side is
\begin{equation}\label{eq6.L}
{}_3F_2\left[\begin{matrix}-2k-1,\, -2k-2,\, -t-L-k\\ -2k-1, \, -2L-2k \end{matrix}\,\Biggl|\,1\right].
\end{equation}
How to interpret this expression? It is tempting to directly reduce it to a ${}_2 F_1$ series by removing the parameter $-2k-1$ from the upper and lower rows, but this would give an incorrect result. A correct argument is the following.

We come back to the initial ${}_4F_3$ series in \eqref{eq6.C2}, keep $a$ as a parameter, but exclude  $\epsi$ by imposing the linear  relation
$$
-2k-2\epsi+1=(-k)+(-k-a)-\tfrac12, \quad \text{that is,} \quad  2\epsi=a+\tfrac32.
$$
The point is that the resulting ${}_4F_3$ series is still of the form \eqref{eq6.C2}. Next, it is a rational function of the parameter $a$ and has no singularity at $a=\frac12$ for generic  $L$. Therefore, we may apply the identity \eqref{eq6.C2} and then pass to the limit as $a$ goes to $\frac12$. This leads to the conclusion that \eqref{eq6.L} must be interpreted as
\begin{equation}\label{eq6.E}
{}_2F_1\left[\begin{matrix}-2k-2,\, -t-L-k\\ -2L-2k \end{matrix}\,\Biggl|\,1\right] - (\text{the last term of the series expansion}).
\end{equation}

Applying the Chu-Vandermonde identity \cite[Corollary 2.2.3]{AAR}
\begin{equation}\label{eq6.J}
{}_2F_1\left[\begin{matrix}-N,\, \al\\ \be \end{matrix}\,\Biggl|\,1\right]=\frac{(\be-\al)_N}{(\be)_N}, \qquad N=0,1, 2,\dots, \quad \be\ne0,-1\dots, -N,
\end{equation}
we see that \eqref{eq6.E} is equal to
\begin{equation*}
\frac{(t-L-k)_{2k+2}-(-t-L-k)_{2k+2}}{(-2L-2k)_{2k+2}}.
\end{equation*}

Next, we have to multiply this by 
\begin{equation*}
\frac{(-1)^k(k+2)_k(L)_k(L+\tfrac12)_k}{(-t+L+1)_k(t+L+1)_k(\tfrac32)_k}\cdot \frac{-2L-2k}{2t},
\end{equation*}
where the first fraction comes from \eqref{eq6.D1} and the second fraction comes from $\dfrac{E}{E-2D}$ (see \eqref{eq6.C2}). After a simplification this finally gives the desired expression \eqref{eq6.F1}. 

Case ($\BB$): $a=\frac12$, $\epsi=\frac12$.  Now the ${}_4 F_3$ in \eqref{eq6.D2} has the same form as on the right-hand side of \eqref{eq6.C1}, with the parameters
\begin{equation*}
A=-2k, \quad B=-2k-1, \quad D=-t-L-k+\tfrac12, \quad E=-2L-2k+1,
\end{equation*}
so that the corresponding ${}_3 F_2$ on the left-hand side is
\begin{equation*}
{}_3F_2\left[\begin{matrix}-2k,\, -2k-1,\, -t-L-k+\frac12\\ -2k, \, -2L-2k+1 \end{matrix}\,\Biggl|\,1\right].
\end{equation*}
By the same argument as above, the correct elimination of the parameter $-2k$ leads to
\begin{equation*}
{}_2F_1\left[\begin{matrix}-2k-1,\, -t-L-k+\frac12\\ -2L-2k+1 \end{matrix}\,\Biggl|\,1\right] - (\text{the last term of the series expansion}).
\end{equation*}
Applying the Chu--Vandermonde identity \eqref{eq6.J} to this ${}_2F_1$ we obtain
\begin{equation*}
\frac{(t-L-k+\tfrac12)_{2k+1}+(-t-L-k+\tfrac12)_{2k+1}}{(-2L-2k+1)_{2k+1}}.
\end{equation*}
Next, we multiply this by 
\begin{equation*}
\frac{(-1)^k(k+1)_k(L)_k(L+\tfrac12)_k}{(-t+L+\frac12)_k(t+L+\frac12)_k(\tfrac32)_k}, 
\end{equation*}
and after a simplification this finally gives the desired expression \eqref{eq6.F2}.

Case ($\DD$): $a=-\frac12$, $\epsi=0$.  The ${}_4 F_3$ in \eqref{eq6.D2} still has the same form as on the right-hand side of \eqref{eq6.C1}, with the parameters
\begin{equation*}
A=-2k, \quad B=-2k+1, \quad D=-t-L-k+1, \quad E=-2L-2k+2.
\end{equation*}
According to \eqref{eq6.C1} this leads  to the ${}_3 F_2$ series
\begin{equation*}
{}_3F_2\left[\begin{matrix}-2k,\, -2k+1,\, -t-L-k+1\\ -2k+1, \, -2L-2k+2 \end{matrix}\,\Biggl|\,1\right].
\end{equation*}
Here the correct elimination of the parameter $-2k+1$ is achieved by the limit transition
\begin{equation}\label{eq6.G}
\lim_{a\to-\frac12}{}_3F_2\left[\begin{matrix}-2k,\, -2k-2a,\, -t-L-k+\frac34-\tfrac12 a\\ -2k-a+\tfrac12, \, -2L-2k+2 \end{matrix}\,\Biggl|\,1\right].
\end{equation}
The limit in \eqref{eq6.G} can be taken term-wise. It differs from the expansion of 
\begin{equation}\label{eq6.H}
{}_3F_2\left[\begin{matrix}-2k,\, -t-L-k+1\\ -2L-2k+2 \end{matrix}\,\Biggl|\,1\right]
\end{equation}
in the last term only. Namely, in \eqref{eq6.G}, the last term is
\begin{equation*}
\lim_{a\to-\frac12}\frac{(-2k)_{2k}(-2k-2a)_{2k}(-t-L-k+\tfrac34-\tfrac12 a)}{(-2k-a+\tfrac12)_{2k}(-2L-2k+1)_{2k}(2k)!}=
2\, \frac{(-t-L-k+1)_{2k}}{(-2L-2k+2)_{2k}}.
\end{equation*}
while the last term in \eqref{eq6.H} is 
\begin{equation}\label{eq6.I}
\frac{(-t-L-k+1)_{2k}}{(-2L-2k+2)_{2k}}.
\end{equation}
We conclude that the limit in \eqref{eq6.G} is equal to the sum of \eqref{eq6.H} and \eqref{eq6.I}:
\begin{equation}\label{eq6.K}
{}_3F_2\left[\begin{matrix}-2k,\, -t-L-k+1\\ -2L-2k+2 \end{matrix}\,\Biggl|\,1\right]
+\frac{(-t-L-k+1)_{2k}}{(-2L-2k+2)_{2k}}.
\end{equation}

Using the Chu-Vandermonde identity \eqref{eq6.J} we obtain that \eqref{eq6.K} equals
\begin{equation*}
\frac{(t-L-k+1)_{2k}+(-t-L-k+1)_{2k}}{(-2L-2k+2)_{2k}}.
\end{equation*}
Next, we multiply this by 
\begin{equation*}
\frac{(-1)^k(k)_k(L)_k(L-\tfrac12)_k}{(-t+L)_k(t+L)_k(\tfrac12)_k}, 
\end{equation*}
and after a simplification this finally gives the desired expression \eqref{eq6.F3}. 
\end{proof}

\section{Elementary expression of transition coefficients $(e_m:g_k)$ in the case of symplectic and orthogonal characters}\label{sect7}

Let
\begin{equation*}
E(m,k)=E(m,k;a,\epsi,L)
\end{equation*}
denote the explicit expression for the transition coefficients $(e_m:g_k)$ obtained in Theorem \ref{thm5.A}.

As in section \ref{sect6}, we examine the three distinguished cases when the parameters $(a,\epsi)$ correspond to the $\CBD$ characters. We show that then the formulas for the coefficients $E(m,k)=E(m,k;a,\epsi,L)$ are simplified: the  ${}_4F_3$ hypergeometric series that appear in the formulas admit closed elementary expressions.  To distinguish between these three cases $\CBD$ we use the superscript $(\CC)$, $(\BB)$ or $(\DD)$, respectively.

\begin{theorem}\label{thm7.A}

{\rm (i)} If $m\ge k\ge1$, then
$$
E^{(\CC)}(m,k):=E(m,k;\tfrac12,1,L)=\frac{2(k+1)(m-1)!(2L-2)(2L-1)(L+m)(2L+m-k-3)!}
{(m-k)!(2L+m)!}.
$$
$$
E^{(\BB)}(m,k):=E(m,k;\tfrac12,\tfrac12,L)=\frac{2(k+\tfrac12)(m-1)!(2L-2)(2L-1)(2L+m-k-3)!}{(m-k)!(2L+m-1)!},
$$
$$
E^{(\DD)}(m,k):=E(m,k;-\tfrac12,0,L)=\frac{2(m-1)!(2L-2)(2L+m-k-3)!}{(m-k)!(2L+m-2)!},
$$

{\rm(ii)} If $m\ge1$ and $k=0$, then
$$
E^{(\CC)}(m,0):=E(m,0;\tfrac12,1,L)=-\frac{2(m+4L-3)(L+m)}{(2L+m)(2L+m-1)(2L+m-2)}.
$$
$$
E^{(\BB)}(m,0):=E(m,0;\tfrac12,\tfrac12,L)=-\frac{2m+6L-5}{(2L+m-1)(2L+m-2)},
$$
$$
E^{(\DD)}(m,0):=E(m,0;-\tfrac12,0,L)=-\frac{2}{2L+m-2},
$$
\end{theorem}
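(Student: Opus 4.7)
\smallskip
\noindent\textbf{Proof proposal.} My plan is to take the general expressions for $(e_m:g_k)$ furnished by Theorem \ref{thm5.A} and collapse the ${}_4F_3$ series in each of the three $\CBD$ specializations to an elementary closed form. First I check that in each of the three cases the ${}_4F_3$ of Theorem \ref{thm5.A}(i) is Saalsch\"utzian: in case $\CC$ with $(a,\epsi)=(\tfrac12,1)$ the upper parameters sum to $4k+2L+\tfrac52$ and the lower to $2L+4k+\tfrac72$; in cases $\BB$ and $\DD$ the same arithmetic yields difference $1$. Moreover, in each case the series has a distinguished structure: a pair of upper parameters differing by $\tfrac12$ (namely $(k+1,k+\tfrac32)$ in $\CC, \BB$, and $(k+1,k+\tfrac12)$ in $\DD$), matched in the lower row with a parameter that is essentially twice the midpoint of that pair (e.g.\ $2k+3$ in $\CC$, $2k+2$ in $\BB$, $2k+1$ in $\DD$); the lower pair $(L+k,\,L+k+\tfrac12)$ in $\CC, \BB$ and $(L+k,\,L+k-\tfrac12)$ in $\DD$ exhibits the same half-integer pairing with the remaining upper parameter.

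\smallskip
Next I apply the Legendre duplication identity $(p)_j(p+\tfrac12)_j=4^{-j}(2p)_{2j}$ to both the upper and the lower half-integer pairs in each ${}_4F_3$. The factors $4^{-j}$ cancel, and the series becomes, schematically,
\[
\sum_{j}\frac{(k-m)_j\,(c)_j\,(2p)_{2j}}{(2L+2k+\de)_{2j}\,(2p+\tau)_j\,j!},
\]
where $(c, p, \de, \tau)$ depends on the case. Writing $(2L+2k+\de)_{2j}=(2L+2k+\de)_j(2L+2k+\de+j)_j$ and $(2p)_{2j}/(2p+\tau)_j$ as a short rational function of $j$ turns the sum into a terminating balanced ${}_3F_2(1)$ (or a ${}_2F_1(1)$ in the simplest cases), which is evaluated in closed form by the Pfaff--Saalsch\"utz theorem (respectively by Chu--Vandermonde). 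Combining the resulting elementary value with the explicit prefactor $2(L+\epsi+m-1)(2L+2\epsi+m-1)_{k-1}\,(m-1)!/(m-k)!\cdot(a+1)_k/[(L)_k(L+a)_k(k+2\epsi)_k]$ of Theorem \ref{thm5.A} and specializing $(a,\epsi)$, a routine but lengthy factorial simplification produces the expressions stated in Theorem \ref{thm7.A}(i). The case $k=0$ in part (ii) is handled identically from Theorem \ref{thm5.A}(ii): the ${}_4F_3$ there is again balanced with the same half-integer pairing, so the same duplication-plus-Saalsch\"utz argument applies and yields the quoted elementary formulas after cancellation.

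\smallskip
\emph{Expected obstacle.} The main difficulty is the bookkeeping in the collapsing step when some of the Pochhammer arguments degenerate to non-positive integers: the duplication and Pfaff--Saalsch\"utz identities hold in their naive form only when the denominator parameters avoid non-positive integers. As in the proof of Theorem \ref{thm6.A}, I expect to first perform the manipulations with a continuous parameter (perturbing $\epsi$ or $a$ slightly away from the $\CBD$ value), apply the hypergeometric identities in the generic regime, and then pass to the limit, using that the $(a,\epsi)$-dependent prefactor in Theorem \ref{thm5.A} absorbs any removable singularities introduced by the limit, exactly as was done in the case analysis following equation \eqref{eq6.D2}.

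\smallskip
\emph{Alternative route.} If the hypergeometric bookkeeping becomes unwieldy, the theorem can be proved by combining Theorem \ref{thm6.A} with a residue computation: using the explicit rational forms \eqref{eq6.F1}--\eqref{eq6.F3}, the coefficients $(g_k:e_j)=\Res_{t=A_j}g_k(t)$ are read off directly (each $g_k^{(X)}$ is a sum of two rational functions, and only one of the two has a pole at $t=A_j$, with a residue that is a ratio of products of linear factors). This yields an explicit triangular matrix $G=[(g_k:e_j)]$, and Theorem \ref{thm7.A} is equivalent to the identity $G\cdot E^{(X)}=I$; verifying this identity reduces to a single Chu--Vandermonde evaluation $\sum_{j}(-1)^j\binom{N}{j}(\alpha)_{N-j}(\beta)_j=\ldots$ with $N=m-k$. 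This sidesteps the ${}_4F_3$ identities, at the cost of doing the linear algebra by hand in each of the three cases.
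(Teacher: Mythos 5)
Your structural observation --- that in the three $\CBD$ cases the ${}_4F_3$ of Theorem \ref{thm5.A}(i) contains an upper pair $(A,A+\tfrac12)$ (or $(A+\tfrac12,A+1)$) matched with a lower parameter $2A+1$, and a lower pair matched with the remaining upper parameter --- is exactly the right one, and it is the same observation the paper's proof rests on. However, the mechanism you propose for exploiting it does not work. The duplication identity $(p)_j(p+\tfrac12)_j=4^{-j}(2p)_{2j}$ is an identity, so applying it to both pairs merely rewrites the series; it cannot lower its order. Concretely, after duplication the leftover ratios are of the form $(2A)_{2j}/(2A+1)_j=2A\,(2A+j+1)_{j-1}$ and $1/(2B)_{2j}=1/\bigl((2B)_j(2B+j)_j\bigr)$, whose Pochhammer arguments themselves depend on $j$; the term ratio $t_{j+1}/t_j$ remains a rational function of $j$ of degree $(4,4)$, so the sum is still a ${}_4F_3$ in disguise and never becomes a balanced ${}_3F_2$ amenable to Pfaff--Saalsch\"utz. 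The evaluations actually needed are ``strange evaluations'' (the paper's Lemma \ref{lemma7.A}, i.e.\ Slater (2.4.2.2) with (III.20), or \cite[(3.20),(3.21)]{GesselStanton}); their known proofs go through quadratic transformations of Gauss series (as in the lemma of section \ref{sect6}), not through duplication plus Saalsch\"utz. Without importing such a summation theorem, your part (i) does not close.

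Part (ii) contains a second, independent gap: the $k=0$ series of Theorem \ref{thm5.A}(ii) does \emph{not} have the half-integer pairing, so it cannot be ``handled identically.'' For instance, in case $\CC$ the candidate upper pair is $(1,\tfrac52)$ against the lower parameter $4$, which is not of the form $(A,A+\tfrac12;\,2A+1)$, and the same failure occurs in cases $\BB$ and $\DD$. The paper explicitly notes that no applicable summation formula was found here and instead derives part (ii) from part (i) via the normalization $E(m,0)=-\sum_{k=1}^m E(m,k)$ (a consequence of $e_m(\infty)=0$ and $g_k(\infty)=1$), then verifies the resulting elementary identities by induction on $m$; you would need this or an equivalent extra step. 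Your fallback route --- reading off the residues $(g_k:e_j)$ from the closed forms \eqref{eq6.F1}--\eqref{eq6.F3} and checking that the claimed matrix inverts the triangular matrix $[(g_k:e_j)]$ --- is a genuinely different and in principle viable strategy, since uniqueness of the expansion in the basis $\{e_m\}$ makes such a verification sufficient; but as written it is only a sketch, and the assertion that the orthogonality check collapses to a single Chu--Vandermonde evaluation is unverified.
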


Recall that $E(m,k)=0$ for $k>m$, so that the theorem covers all possible cases. The proof is based on the following lemma.

\begin{lemma}\label{lemma7.A}
Let $n=0,1,2,\dots$\,. The following two formulas hold:
\begin{equation}\label{eq7.A1}
{}_4F_3\left[\begin{matrix}-n,\, A,\, A+\frac12,\, 2B+n
\\B,\,  B+\frac12,\, 2A+1\end{matrix}\Biggl|1\right]=\frac{\Ga(2B-2A+n)\Ga(2B)}{\Ga(2B-2A)\Ga(2B+n)}
\end{equation}
and
\begin{equation}\label{eq7.A2}
{}_4F_3\left[\begin{matrix}-n,\, A+\frac12,\, A+1,\, 2B+n
\\B+\frac12,\,  B+1,\, 2A+1\end{matrix}\Biggl|1\right]=\frac{B}{(B+n)}\,\frac{\Ga(2B-2A+n)\Ga(2B)}{\Ga(2B-2A)\Ga(2B+n)}.
\end{equation}
\end{lemma}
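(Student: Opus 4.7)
Both identities can be proved by a parallel generating-function argument that reduces each ${}_4F_3$ to a ${}_2F_1$ summable by one of Gauss's two quadratic transformations (the ones recorded in Section~\ref{sect6} as \eqref{eq6.A} and \eqref{eq6.B}).

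For (i), the first step is to apply the duplication formulas $(A)_m(A+\tfrac12)_m=(2A)_{2m}/4^m$ and $(B)_m(B+\tfrac12)_m=(2B)_{2m}/4^m$; the identities $(2B+n)_m=(2B)_{n+m}/(2B)_n$ and $(2B)_{n+m}=(2B)_{2m}(2B+2m)_{n-m}$ then collapse the summand into
$$
\frac{1}{(2B)_n}\sum_{m=0}^n(-1)^m\binom{n}{m}\frac{(2A)_{2m}}{(2A+1)_m}(2B+2m)_{n-m}.
$$
Setting $a=2A$, $b=2B$, identity (i) reduces to the polynomial identity $(b-a)_n=\sum_m(-1)^m\binom{n}{m}(a)_{2m}(b+2m)_{n-m}/(a+1)_m$. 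Multiplying by $z^n/n!$ and summing over $n\ge 0$, the LHS generating function is $(1-z)^{a-b}$ by the binomial theorem; after interchanging sums and substituting $k=n-m$, the RHS becomes $(1-z)^{-b}\cdot{}_2F_1\bigl(a/2,(a+1)/2;a+1;-4z/(1-z)^2\bigr)$, where I have used duplication in reverse. Applying \eqref{eq6.A} in the form ${}_2F_1(\alpha,\alpha+\tfrac12;2\alpha+1;4u(1-u))=(1-u)^{-2\alpha}$ with $u=-z/(1-z)$, so that $4u(1-u)=-4z/(1-z)^2$ and $1-u=1/(1-z)$, evaluates the inner ${}_2F_1$ to $(1-z)^{a}$, and the two sides match.

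For (ii) I would follow the same three-step template, now using the pairings $(A+\tfrac12)_m(A+1)_m=(2A+1)_{2m}/4^m$ and $(B+\tfrac12)_m(B+1)_m=(2B+1)_{2m}/4^m$. The reduced polynomial identity involves a Pochhammer symbol with negative index (since $(2B+1)_{2m}$ and $(2B+n)_m$ pair asymmetrically, producing a factor of the form $1/(b+2m)$), and its generating function produces an inner series of the form ${}_2F_1\bigl((a+1)/2,(a+2)/2;a+1;\,\cdot\,\bigr)$. This series is summed by the companion quadratic transformation \eqref{eq6.B}, which contributes an additional factor $(1-2u)^{-1}$; at $u=-z/(1-z)$ this evaluates to $(1-z)/(1+z)$, and this factor is precisely what accounts for the rational prefactor $B/(B+n)$ on the right-hand side of (ii). The hardest part will be the bookkeeping in this second case: one must handle the convention $(x)_{-1}=1/(x-1)$ in the reduced sum, introduce the representation $1/(b+2m)=\int_0^1 r^{b+2m-1}dr$ to absorb the extra factor into the inner ${}_2F_1$, and verify that the resulting integral identity matches the generating function of $B(2B-2A)_n/[(B+n)(2B)_n]$ obtained from the conjectured RHS via $1/(B+n)=\int_0^1 t^{B+n-1}dt$. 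Once these substitutions are set up consistently, the final matching is again a mechanical consequence of the appropriate Gauss quadratic transformation.
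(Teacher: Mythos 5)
Your proof of \eqref{eq7.A1} is correct, and it takes a genuinely different route from the paper's: the paper simply quotes the summation from Slater's book ((2.4.2.2) together with (III.20)) and then derives \eqref{eq7.A2} from \eqref{eq7.A1} by the standard transformation (2.4.1.7) of terminating balanced ${}_4F_3(1)$ series. Your duplication-plus-generating-function argument is self-contained: the reduction to the polynomial identity $\sum_{m=0}^n(-1)^m\binom{n}{m}(a)_{2m}(b+2m)_{n-m}/(a+1)_m=(b-a)_n$ (with $a=2A$, $b=2B$) is exact, and the evaluation of the inner series by \eqref{eq6.A} at $u=-z/(1-z)$ gives $(1-z)^a$, so both exponential generating functions equal $(1-z)^{a-b}$. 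Part (i) stands on its own and is a nice elementary alternative to the citation.

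Part (ii), however, has a genuine gap. After duplication the new ingredient is the $m$-dependent factor $1/(b+2m)$, and once you insert $1/(b+2m)=\int_0^1 r^{b+2m-1}\,dr$ the inner series becomes ${}_2F_1\bigl(\tfrac{a+1}{2},\tfrac{a+2}{2};a+1;-4zr^2/(1-z)^2\bigr)$: the point $u$ at which \eqref{eq6.B} is applied is now determined by $4u(1-u)=-4zr^2/(1-z)^2$, hence is an algebraic function of $r$ equal to $-z/(1-z)$ only at the endpoint $r=1$. The factor $(1-2u)^{-1}$ therefore stays inside the $r$-integral and cannot be read off as a global factor $(1-z)/(1+z)$; consistently, the generating function of the right-hand side, $\sum_n\frac{(b-a)_n}{b+2n}\frac{z^n}{n!}=\int_0^1 s^{b-1}(1-zs^2)^{a-b}\,ds$, is of incomplete-beta type and is not $(1-z)^{a-b}\cdot\frac{1-z}{1+z}$ (already the coefficients of $z$ disagree), so the asserted bookkeeping cannot close the argument as stated. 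The proof can be completed, but it needs one further idea absent from your sketch: write the left generating function as $\int_0^1 r^{b-1}(1-z)^{-b}\,{}_2F_1\bigl(\tfrac{a+1}{2},\tfrac{a+2}{2};a+1;-4zr^2/(1-z)^2\bigr)dr$, evaluate the ${}_2F_1$ in closed form via \eqref{eq6.B} with $(A,B)=(a,a+1)$, and then substitute $r=s(1-z)/(1-zs^2)$, under which the integrand becomes exactly $s^{b-1}(1-zs^2)^{a-b}$, matching the right-hand generating function. Alternatively, and much more cheaply, do what the paper does: having proved \eqref{eq7.A1}, deduce \eqref{eq7.A2} from it by the transformation (2.4.1.7) of Slater, valid for any terminating balanced ${}_4F_3(1)$, with the specialization indicated in the paper.
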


\begin{proof}[Proof of the lemma]
For the first formula, see Slater \cite[p. 65, (2.4.2.2) and (III.20)]{Slater}. The second formula is derived from the first one using the transformation \cite[(2.4.1.7)]{Slater}, which holds for any balanced terminating series ${}_4F_3(1)$. In a  slightly rewritten notation it reads:
\begin{equation}
{}_4F_3\left[\begin{matrix}-n,\, a_1,\, a_2,\, x
\\b_1,\,  b_2,\, y\end{matrix}\Biggl|1\right]=\frac{(b_1-x)_n(b_1-u)_n}{(b_1)_n(b_1-x-u)_n} {}_4F_3\left[\begin{matrix}-n,\, a_1-u,\, a_2-u,\, x
\\b_1-u,\, b_2-u,\, y\end{matrix}\Biggl|1\right],
\end{equation}
where 
$$
u:=a_1+a_2-y.
$$
We specialize it to
$$
a_1=A+\tfrac12, \quad  a_2=A+1, \quad  x=2B+n, \quad b_1=B+\tfrac12, \quad  b_2=B+1, \quad y=2A+1.
$$

For other derivations, see \cite[(3.20) and (3.21)]{GesselStanton} and further references therein.
\end{proof}

\begin{proof}[Proof of Theorem \ref{thm7.A}]
(i) Let us show that Lemma \ref{lemma7.A} can be applied to the ${}_4F_3$ series displayed in claim (ii) of Theorem  \ref{thm5.A}. 

Indeed, the series in question is
$$
{}_4F_3\left[\begin{matrix}  k-m,\; k+1,\; k+a+1,\; 2L+2\epsi+m+k-2
\\ L+k,\; L+a+k,\; 2k+2\epsi+1\end{matrix}\;\Biggl|\;1\right].
$$
We look at the triple $(k+1, k+a+1, 2k+2\epsi+1)$. It takes the following form:
$$
(k+1, k+a+1, 2k+2\epsi+1)=\begin{cases}
(k+1, k+\tfrac32, 2k+3), & \text{case $(C)$}, \\
(k+1, k+\tfrac32, 2k+2), & \text{case $(B)$}, \\
(k+1, k+\tfrac12, 2k+1), & \text{case $(D)$}.
\end{cases}
$$
It follows that formula \eqref{eq7.A1} is applicable in the cases $(C)$ and $(D)$, while formula \eqref{eq7.A2} is applicable in the case $(B)$. This leads to the expressions in claim (i).

(ii) Formulas \eqref{eq7.A1} and \eqref{eq7.A2} are not applicable to the ${}_4F_3$  series in claim (iii) of Theorem  \ref{thm5.A} in our three special cases. Perhaps suitable summation formulas can be extracted from the literature, but I have not succeeded.  Here is another way to solve the problem. 

Namely, observe that $e_m(\infty)=0$ for each $m\ge1$ while $g_k(\infty)=1$ for all $k$. It follows that 
$$
E(m,0)=-\sum_{k=1}^m E(m,k), \qquad m=1,2,\dots\,.
$$
Therefore, the formulas in claim (ii) are equivalent to the following three identities
\begin{multline*}
\sum_{k=1}^m \frac{(2k+2)(m-1)!(2L-2)(2L-1)(L+m)(2L+m-k-3)!}
{(m-k)!(2L+m)!}\\
=\frac{2(m+4L-3)(L+m)}{(2L+m)(2L+m-1)(2L+m-2)}.
\end{multline*}

\begin{multline*}   
\sum_{k=1}^m\frac{(2k+1)(m-1)!(2L-2)(2L-1)(2L+m-k-3)!}{(m-k)!(2L+m-1)!}\\
=\frac{2m+6L-5}{(2L+m-1)(2L+m-2)}.
\end{multline*}

\begin{equation*}
\sum_{k=1}^m \frac{2(m-1)!(2L-2)(2L+m-k-3)!}{(m-k)!(2L+m-2)!}
=\frac{2}{2L+m-2}
\end{equation*}

Set $M:=2L-2$; after simplification one can rewrite these identities as follows
$$
S^{(\CC)}(m,M):=M\sum_{k=1}^m
\frac{(M+m-k-1)!(k+1)}{(m-k)!}=\frac{(M+m-1)!(m+2M+1)}{(m-1)!(M+1)}.
$$

$$
S^{(\BB)}(m,M):=M\sum_{k=1}^m
\frac{(M+m-k-1)!(2k+1)}{(m-k)!}=\frac{(M+m-1)!(2m+3M+1)}{(l-1)!(M+1)},
$$

$$
S^{(\DD)}(m,M):=M\sum_{k=1}^m
\frac{(M+m-k-1)!}{(m-k)!}=\frac{(M+m-1)!}{(m-1)!},
$$

The  identity for $S^{(\DD)}(m,M)$ is checked by induction on
$m$ using the relation
$$
S^{(\DD)}(m+1,M)=S^{(\DD)}(m,M)+\frac{M(M+m-1)!}{m!}. 
$$

Next, the other two sums, $S^{(\BB)}(m,M)$ and $S^{(\CC)}(m,M)$, are
reduced to $S^{(\DD)}(m,M)$ using the relations
$$
(2m+1)S^{(\DD)}(m,M)-S^{(\BB)}(m,M)
=2M\sum_{k=0}^{m-1}\frac{(M+m-k-1)!}{(m-k-1)!)}
=\frac{2M}{M+1}S^{(\DD)}(m-1,M+1)
$$
and
$$
(m+1)S^{(\DD)}(m,M)-S^{(\CC)}(m,M)
=M\sum_{k=0}^{m-1}\frac{(M+m-k-1)!}{(m-k-1)!)}
=\frac{M}{M+1}S^{(\DD)}(m-1,M+1).
$$
This completes the proof.
\end{proof}

\section{Proof of Theorem \ref{thmD} and application to discrete splines}\label{sect8}

\subsection{Proof of Theorem \ref{thmD}}\label{sect8.1}

The results of computations in Sections \ref{sect6} and \ref{sect7} are summarized below in Theorem \ref{thm8.A} (a detailed version of Theorem \ref{thmD} from section \ref{sect1.9}). To state it we recall  the relevant definitions and notation. 

\begin{itemize}

\item 

We are dealing with the stochastic matrix $\La^N_K$ with the entries $\La^N_K(\nu,\ka)$, where $\nu$ ranges over $\Sign^+_N$,  $\ka$ ranges over $\Sign^+_K$, and $N>K\ge1$; see Definition \ref{def1.A}. In general, the matrix depends on a pair $(a,b)$ of Jacobi parameters, but it  is convenient to replace the second parameter $b$ by $\epsi:=\frac12(a+b+1)$. 

\item 
We are especially interested in the three distinguished cases, which are linked to the characters of type $\CC$, $\BB$, and $\DD$. In terms of the parameters $(a,\epsi)$, this means that  
\begin{equation}\label{eq8.A}
(a,\epsi)=\begin{cases} (\tfrac12,1),\\ (\tfrac12,\tfrac12),\\ (-\tfrac12,0), \end{cases}
\end{equation} 
respectively. 

\item

We set $L:=N-K+1$, so that $L$ is an integer $\ge2$. In \eqref{eq1.N} we introduced a grid $\AAA(\epsi,L)$ on $\R_{>0}$ depending on $\epsi$ and $L$:
 \begin{equation*}
\AAA(\epsi,L):=\{A_1,A_2,\dots\}, \qquad A_m:=L+\epsi+m-1, \quad m=1,2,\dots,
\end{equation*}

\item

In section \ref{sect1.8} we introduced the space $\FF(\epsi,L)$ formed by even rational functions with simple poles located at $(-\AAA(\epsi,L))\cup\AAA(\epsi,L)$. For a rational function $\phi\in \FF(\epsi,L)$, we denote by $\Res_{t=A_m}(\phi(t))$ its residue at the point $t=A_m$ of the grid $\AAA(\epsi,L)$. 
 
\item

In \eqref{eq1.g_k} we introduced even rational functions $g_k(t)=g_k(t;a,\epsi,L)$ with index $k=0,1,2,\dots$ and general parameters $(a,\epsi)$. In the general case, $g_k(t)$ is given by terminating hypergeometric series ${}_4F_3$. For the special values \eqref{eq8.A},  these functions admit an explicit elementary expression (Theorem \ref{thm6.A}). 

\item

In Theorem \ref{thm5.A} we computed the transition coefficients $(e_m:g_k)$, renamed to $E(m,k)$ in the beginning of section \ref{sect7}.  For the special values \eqref{eq8.A},  these coefficients admit an explicit elementary expression (Theorem \ref{thm7.A}). 

\item

In  \eqref{eq1.F_N} we assigned to each signature $\nu\in\Sign^+_N$ its characteristic function  
\begin{equation}\label{eq8.E}
F_N(t)=F_N(t;\nu;\epsi):=\prod_{i=1}^N\frac{t^2-(N-i+\epsi)^2}{t^2-(\nu_i+N-i+\epsi)^2}.
\end{equation}

\item

For $\ka\in\Sign^+_K$, we abbreviate $k_i:=\ka_i+K-i$, where $i=1,\dots,K$, and set
$$
d_K(\ka;\epsi):=\prod_{1\le i<j\le K}((k_i+\epsi)^2-(k_j+\epsi)^2).
$$
This agrees with the definition \eqref{eq1.d_N}. 

\end{itemize}

\begin{theorem}\label{thm8.A}
In the three distinguished cases \eqref{eq8.A} the following formula holds
\begin{equation}\label{eq8.B}
\frac{\La^N_K(\nu,\ka)}{d_K(\ka;\epsi)}=\det[M(i,j)]_{i,j=1}^K,
\end{equation}
where $[M(i,j)]$ is a $K\times K$ matrix whose entries are given by the following elementary expressions, which are in fact finite sums{\rm:}

$\bullet$ If $i<K$ or $i=K$ but $\ka_K>0$, then 
\begin{equation}\label{eq8.C}
M(i,j)=\sum_{m\ge k_i}\Res_{t=A_m}\big(g_{K-j}(t)F_N(t)\big) E(m,k_i).
\end{equation}

$\bullet$ If $i=K$ and $\ka_K=0$, then
\begin{equation}\label{eq8.D}
M(i,j)=M(K,j)=1+\sum_{m\ge 1}\Res_{t=A_m}\big(g_{K-j}(t)F_N(t)\big) E(m,0).
\end{equation}
\end{theorem}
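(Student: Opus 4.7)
The proof is essentially an assembly of tools already built. The plan is to start from the Jacobi–Trudi-type formula of Theorem \ref{thm4.B},
$$
\frac{\La^N_K(\nu,\ka)}{d_K(\ka;\epsi)} = \det\bigl[(g_{K-j}F_N:\,g_{k_i})\bigr]_{i,j=1}^K, \qquad k_i:=\ka_i+K-i,
$$
and then expand each matrix entry by means of Proposition \ref{prop5.A}, which reduces $(\phi:g_k)$ for $\phi\in\FF(\epsi,L)$ to residues of $\phi$ on the grid $\AAA(\epsi,L)$ multiplied by the transition coefficients $(e_m:g_k)=E(m,k)$. The essential point is to verify, for $\phi=g_{K-j}F_N$, the hypothesis $\phi\in\FF(\epsi,L)$: this was already established in Step 3 of the proof of Lemma \ref{lemma4.A}, so the expansion is legitimate.

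Next, I would split into two cases according to the value of $k_i$. For $i<K$ we have $k_i=\ka_i+K-i\ge K-i\ge 1$; and for $i=K$ we have $k_K=\ka_K$, which is $\ge 1$ precisely when $\ka_K>0$. In either of these subcases the first branch of Proposition \ref{prop5.A} applies and yields formula \eqref{eq8.C} directly. The remaining subcase $i=K$, $\ka_K=0$ corresponds to $k_K=0$, so the second branch of Proposition \ref{prop5.A} produces
$$
(g_{K-j}F_N:\,g_0)=(g_{K-j}F_N)(\infty)+\sum_{m\ge1}\Res_{t=A_m}\bigl(g_{K-j}(t)F_N(t)\bigr)\,E(m,0).
$$
Here one checks that $F_N(\infty)=1$ from the definition \eqref{eq8.E} and that $g_{K-j}(\infty)=1$ because in the ${}_4F_3$ series \eqref{eq1.g_k} every term with $m\ge 1$ has $(-t+L+\epsi)_m(t+L+\epsi)_m$ in the denominator, which grows like $t^{2m}$. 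Therefore the first summand equals $1$, producing formula \eqref{eq8.D}.

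Finiteness of the sums is automatic: $F_N(t)$ has at most $N$ positive poles (among the points $\nu_i+N-i+\epsi$), and $g_{K-j}(t)$ has at most $K-j$ positive poles on $\AAA(\epsi,L)$; hence the product has only finitely many nonzero residues at the grid points.

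Finally, the word ``elementary'' in the statement is justified by substituting the closed-form expressions for the two main ingredients: the explicit product formulas for $g_k^{(\CC)}$, $g_k^{(\BB)}$, $g_k^{(\DD)}$ obtained in Theorem \ref{thm6.A}, and the elementary formulas for $E^{(\CC)}(m,k)$, $E^{(\BB)}(m,k)$, $E^{(\DD)}(m,k)$ obtained in Theorem \ref{thm7.A}. The residues $\Res_{t=A_m}(g_{K-j}(t)F_N(t))$ are then computed as residues of rational functions with explicitly displayed factorizations. No obstacle of any real depth arises — the content of the theorem was already absorbed into Theorems \ref{thm4.B}, \ref{thm6.A} and \ref{thm7.A}, and the only thing to watch for is the slightly delicate handling of the index $k_K=0$, which is the only reason formulas \eqref{eq8.C} and \eqref{eq8.D} must be distinguished.
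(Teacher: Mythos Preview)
Your proposal is correct and follows essentially the same approach as the paper: start from the determinantal formula of Theorem \ref{thm4.B}, apply Proposition \ref{prop5.A} to each entry (with the case split on $k_i\ge1$ versus $k_i=0$), and then invoke Theorems \ref{thm6.A} and \ref{thm7.A} for the elementary expressions. Your write-up is in fact more detailed than the paper's own proof --- in particular your explicit verification that $(g_{K-j}F_N)(\infty)=1$ and your justification of the case split via $k_i=\ka_i+K-i$ are spelled out, whereas the paper leaves these implicit.
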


\begin{proof}
By Theorem \ref{thm4.B}, 
$$
\frac{\La^N_K(\nu,\ka)}{d_K(\ka;\epsi)}=\det[(g_{k-j}F_N: g_{k_i})]_{i,j=1}^K.
$$
Next, recall that Proposition \ref{prop5.A} gives a summation formula for the transition coefficients $(\phi:g_k)$ (see \eqref{eq5.coeff}). We take $\phi=g_{k-j}F_N$ and $k=k_i$. Then \eqref{eq5.coeff} takes the form indicated in \eqref{eq8.C} and \eqref{eq8.D}. Due to Theorem \ref{thm6.A} and Theorem \ref{thm7.A}, we have elementary expressions for the functions $g_{K-j}(t)$ and the coefficients $E(m,k_i)$. The functions $F_N(t)$ are also given by an elementary expression (see \eqref{eq8.E}). 
\end{proof}

\subsection{Symplectic and orthogonal versions of the discrete B-spline}\label{sect8.2}

We write out in a more explicit form the formulas of Theorem \ref{thm8.A} in the particular case $K=1$. 

\begin{corollary}\label{cor8.A}
Let $\nu\in\Sign^+_N$ and $k\in\Sign^+_1=\{0,1,2,\dots\}$.

{\rm(i)} For $(a,\epsi)=(\tfrac12,1)$ {\rm(}the series $\CC$\,{\rm)},
\begin{gather*}
\La^N_1(\nu,k)=2(k+1)(N-1)(2N-1)\\
\times\sum_{i:\, \nu_i-i+1\ge k}\dfrac{(\nu_i-i+2-k)_{2N-3}}{(\nu_i+N-i+1)\prod\limits_{r: \, r\ne i}((\nu_i+N-i+1)^2-(\nu_r+N-r+1)^2)}, \quad k\ge1,\\
\La^N_1(\nu,0)=1-\sum_{i:\, \nu_i-i\ge0}\frac{2(\nu_i+4N-i-2)(\nu_i+N-i+1)}{(\nu_i+2N-i+1)(\nu_i+2N-i)(\nu_i+2N-i-1)}.
\end{gather*}

{\rm(ii)} For $(a,\epsi)=(\tfrac12,\tfrac12)$ {\rm(}the series $\BB$\,{\rm)},
\begin{gather*}
\La^N_1(\nu,k)=2(k+\tfrac12)(N-1)(2N-1)\\
\times\sum_{i:\, \nu_i-i+1\ge k}\dfrac{(\nu_i-i+2-k)_{2N-3}}{(\nu_i+N-i+\tfrac12)\prod\limits_{r: \, r\ne i}((\nu_i+N-i+\tfrac12)^2-(\nu_r+N-r+\tfrac12)^2)}, \quad k\ge1,\\
\La^N_1(\nu,0)=1-\sum_{i:\, \nu_i-i\ge0}\frac{2(\nu_i-i+1)+6N-5)}{(\nu_i+2N-i)(\nu_i+2N-i-1)}.
\end{gather*}

{\rm(iii)} For $(a,\epsi)=(-\tfrac12,0)$ {\rm(}the series $\DD$\,{\rm)},
\begin{gather*}
\La^N_1(\nu,k)=2(N-1)\,\sum_{i:\, \nu_i-i+1\ge k}\dfrac{(\nu_i-i+2-k)_{2N-3}}{\prod\limits_{r: \, r\ne i}((\nu_i+N-i)^2-(\nu_r+N-r)^2)}, \quad k\ge1,\\
\La^N_1(\nu,0)=1-2\sum_{i:\, \nu_i-i\ge0}\frac1{\nu_i+2N-i-1}.
\end{gather*}
\end{corollary}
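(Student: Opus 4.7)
The corollary is a specialization of Theorem \ref{thm8.A} to the case $K=1$, where the $K\times K$ determinant collapses to a single entry and many factors simplify. My strategy is: (i) write down what Theorem \ref{thm8.A} says in this case; (ii) evaluate the residues of $F_N(t)$ explicitly; (iii) multiply by the elementary formulas for $E^{(\CC)},E^{(\BB)},E^{(\DD)}$ from Theorem \ref{thm7.A} and simplify the resulting factorial arithmetic.

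When $K=1$ we have $L=N$, the product $d_1(\ka;\epsi)$ is empty and equals $1$, and $g_{K-j}(t)=g_0(t)\equiv 1$. Hence Theorem \ref{thm8.A} reduces to
\[
\La^N_1(\nu,k)=\sum_{m\ge k}\Res_{t=A_m}\bigl(F_N(t)\bigr)\,E(m,k),\quad k\ge1,
\]
\[
\La^N_1(\nu,0)=1+\sum_{m\ge1}\Res_{t=A_m}\bigl(F_N(t)\bigr)\,E(m,0).
\]
The basic observation is that $F_N$ has a nonzero simple pole at $t=A_m\in\AAA(\epsi,N)$ precisely when $m=\nu_i-i+1$ for some $i$ with $\nu_i\ge i$; for $\nu_i<i$ the would-be pole $n_i:=\nu_i+N-i+\epsi$ is cancelled by a zero of the numerator of $F_N$. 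So only those $i$ contribute to the sum.

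For such $i$ I would compute the residue directly from \eqref{eq8.E}:
\[
\Res_{t=n_i}F_N(t)=\frac{1}{2n_i}\cdot\frac{\prod_{j=1}^N\bigl(n_i^2-(N-j+\epsi)^2\bigr)}{\prod_{r\ne i}(n_i^2-n_r^2)},
\]
and use the factorization $n_i^2-(N-j+\epsi)^2=(\nu_i-i+j)(\nu_i+2N-i-j+2\epsi)$ to rewrite the numerator as a product of two shifted factorials. Substituting $m=\nu_i-i+1$ into the explicit formulas for $E^{(\CC)},E^{(\BB)},E^{(\DD)}$ from Theorem \ref{thm7.A} and multiplying by the residue, the large factorials coming from the residue cancel against those coming from $E(m,k)$. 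In the $k\ge1$ case what survives is precisely the Pochhammer $(\nu_i-i+2-k)_{2N-3}=(\nu_i+2N-i-k-2)!/(\nu_i-i+1-k)!$ together with the stated prefactor depending only on $k$, $N$ and $n_i$. This yields the formulas in (i)--(iii) for $k\ge1$.

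For $k=0$ the same mechanism works, starting from $1+\sum_{m\ge1}\Res\cdot E(m,0)$. The main obstacle --- and the place where I expect the most computational wrestling --- is the factorial simplification: one has to keep track of the three slightly different numerator products $\prod_j(\nu_i+2N-i-j+2\epsi)$ corresponding to $\epsi=1,\tfrac12,0$ and verify that they combine with the explicit denominators of $E^{(\CBD)}(m,0)$ to produce the tidy rational expressions listed. A built-in consistency check is $\sum_{k\ge0}\La^N_1(\nu,k)=1$: summing the closed form for $k\ge1$ via the telescoping identity $\sum_{k'=1}^{\mu}(k')_{2N-3}=(\mu)_{2N-2}/(2N-2)$ gives an alternative derivation of the $k=0$ value that can be cross-checked against the direct residue computation.
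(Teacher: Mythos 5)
Your proposal is correct and follows essentially the same route as the paper: specialize Theorem \ref{thm8.A} to $K=1$ (so $d_1(\ka;\epsi)=1$, $g_{K-j}=g_0\equiv1$, $L=N$), identify the surviving poles of $F_N$ at $A_m$ with $m=\nu_i-i+1$ for $\nu_i\ge i$, compute the residues, and multiply by the elementary expressions for $E(m,k)$ from Theorem \ref{thm7.A}. The paper's proof is terser (it declares the residue computation ``easy''), while you correctly supply the details — the cancellation of would-be poles when $\nu_i<i$, the factorization $n_i^2-(N-j+\epsi)^2=(\nu_i-i+j)(\nu_i+2N-i-j+2\epsi)$, and the factorial cancellations producing $(\nu_i-i+2-k)_{2N-3}$ — all of which check out.
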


\begin{proof}
In the case $K=1$, the formulas of Theorem \ref{thm8.A} are slightly simplified. Namely, the factor $d_K(\ka;\epsi)$ on the left-hand side of \eqref{eq8.B} disappears (because it becomes the empty product equal to $1$); the factor $g_{K-j}(t)$ on the right-hand side of \eqref{eq8.C} and \eqref{eq8.D} also disappears (because it turns into $g_0(t)\equiv1$.)  Taking this into account we obtain
$$
\La^N_1(\nu,k)=\begin{cases} \sum_{m\ge k}\Res_{t=A_m}F_N(t)\big) E(m,k), & k\ge1, \\
1+\sum_{m\ge 1}\Res_{t=A_m}\big(F_N(t)\big) E(m,0), & k=0.
\end{cases}
$$

Further, we have $A_m=N+\epsi+m-1$, because $K=1$ implies $L=N$. Then we substitute the values of $E(m,k)$ from Theorem \ref{thm7.A} and compute the residues of $F_N(t)$ from \eqref{eq8.E}, which is easy. This leads to the expressions displayed above. 
\end{proof}

Putting aside the end point $k=0$, we see that for $\nu$ fixed, $\La^N_1(\nu,k)$ is given by a piecewise polynomial function in $k$ of degree not depending on $\nu$ (the degree is $2N-2$ in type $\CC$ and $\BB$, and $2N-3$ in type $\DD$). Note that the structure of the formulas above is similar to that of the discrete B-spline, cf. \eqref{eq1.E} and \eqref{eq1.B}.  

Likewise, for $K\ge2$, the coefficients $\La^N_K(\nu,\ka)$ (where $K=2,\dots, N-1$) can be written as determinants of $K\times K$ matrices whose entries are one-dimensional piecewise polynomial functions. 

The appearance of piecewise polynomial expressions is not too surprising. Similar effects arise in other spectral problems of representation theory, such as weight multiplicities or decomposition of tensor products (see e.g. Billey-Guillemin-Rassart \cite{BGR}, Rassart \cite{Rassart}).  A specific feature of our problem, however, is that the description of a multidimensional picture can be expressed in terms of one-dimensional spline-type functions and, moreover, we end up with elementary formulas whose structure is much simpler than, say, that of Kostant's partition function for the weight multiplicities.

\section{Concluding remarks}\label{sect9}

\subsection{Contour integral representation}\label{sect9.1}

The large-$N$ limit transition mentioned in the introduction (section \ref{sect1.10}, item 5) relies on the possibility to represent the sums in  \eqref{eq8.C} and \eqref{eq8.D} as contour integrals. This contour integral representation is deduced from the next proposition, which is also of some independent interest. 

\begin{proposition}\label{prop9.A}
We keep to the assumptions and notation of section \ref{sect8.1}. Let $E(m,k)$ stand for  the transition coefficients given by the formulas of Theorem \ref{thm7.A}. We assume that $L\in\{2,3,\dots\}$ is fixed. 

{\rm(i)} There exists a function $R(t,k)$ of the variables $t\in\C$ and $k\in\{0,1,2,\dots\}$, such that $R_k(t):=R(t,k)$  is a rational function of  $t$ for each fixed value of $k$ and 
$$
E(m,k)=R(A_m,k), \qquad  m=1, 2,3,\dots\,.
$$

{\rm(ii)} These properties determine $R(t,k)$ uniquely. 

{\rm(iii)} For any $k=1,2,\dots$, the function $R_k(t)$ does not have poles in the right half-plane 
$$
\mathcal H(\epsi,L):=\{t\in\C: \Re t>L+\epsi-1\}. 
$$
\end{proposition}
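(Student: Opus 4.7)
The plan is to rewrite each of the three formulas of Theorem \ref{thm7.A} (for $k\ge 1$) as a ratio of products of linear factors in $m$, then make the change of variable $m = t - L - \epsi + 1$ (so that $A_m$ becomes $t$). This produces an explicit rational function $R_k(t)$, and all three desired properties can be read off from its factorization. The uniqueness is elementary; the main content of (iii) lies in a pole-cancellation argument whose only substantive input is the standing hypothesis $L\ge 2$.

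I would first treat case $\CC$, where $A_m = L+m$. Writing
\[
\frac{(m-1)!}{(m-k)!} = \prod_{j=1}^{k-1}(m-j), \qquad \frac{(2L+m-k-3)!}{(2L+m)!} = \prod_{j=0}^{k+2}(2L+m-j)^{-1},
\]
the explicit formula of Theorem \ref{thm7.A}(i) becomes, after substituting $m = t-L$,
\[
R^{(\CC)}(t,k) = \frac{2(k+1)(2L-2)(2L-1)\,t\,\prod_{j=1}^{k-1}(t-L-j)}{\prod_{j=0}^{k+2}(t+L-j)}.
\]
Analogous computations for cases $\BB$ (with $m = t-L+\tfrac12$) and $\DD$ (with $m = t-L+1$) produce rational functions $R^{(\BB)}(t,k)$ and $R^{(\DD)}(t,k)$ of similar shape. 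By construction $R(A_m,k) = E(m,k)$ for every $m\ge 1$: the identity is the definition for $m\ge k$, and for $1\le m\le k-1$ both sides vanish (the left by Theorem \ref{thm5.A}, the right because $\prod_{j=1}^{k-1}(m-j)$ then contains a zero factor). This establishes (i). Part (ii) is immediate: two rational functions agreeing on the infinite set $\{A_1, A_2, \dots\}$ must coincide.

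For (iii), I would enumerate the denominator roots of $R_k(t)$ and pick out those lying in $\mathcal H(\epsi,L)$. In case $\CC$, the denominator roots are $t = -L+j$ with $0\le j\le k+2$; such a root lies in $\{\Re t > L\}$ iff $j \ge 2L+1$, which requires $k\ge 2L-1$ and yields ``bad'' poles at $t = L+1, L+2, \dots, k-L+2$. The numerator zeros occur at $t = L+1, L+2, \dots, L+k-1$ (together with $t=0$), so every bad root is cancelled provided $k-L+2 \le L+k-1$, i.e., $L \ge \tfrac{3}{2}$, which follows from $L\ge 2$. Cases $\BB$ and $\DD$ yield the analogous inequalities $k-L+\tfrac{3}{2} \le L+k-\tfrac{3}{2}$ and $k-L+1 \le L+k-2$, both equivalent to $L\ge 2$ as well.

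The main obstacle is the bookkeeping in (iii): tracking half-integer shifts in case $\BB$ and pinning down the precise endpoints of the denominator and numerator ranges. Once these ranges are written out explicitly, the required containment of bad poles in numerator zeros is an elementary inclusion of arithmetic progressions, and the hypothesis $L\ge 2$ enters at exactly one place---ensuring the rightmost numerator zero lies at least as far right as the rightmost denominator root in $\mathcal H(\epsi,L)$.
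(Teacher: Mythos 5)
Your construction of $R(t,k)$, the uniqueness argument, and the pole--zero bookkeeping in part (iii) are in substance the same as the paper's proof: the paper writes the same rational functions as quotients of Gamma factors, $R^{(\CC)}(t,k)=2(k+1)(2L-2)(2L-1)\,t\,\Ga(t-L)\Ga(t+L-k-2)/\bigl(\Ga(t-L-k+1)\Ga(t+L+1)\bigr)$ and its $\BB$, $\DD$ analogues, and obtains regularity in $\mathcal H(\epsi,L)$ by splitting off a polynomial factor $\Ga(t+L-k-2)/\Ga(t-L-k+1)$ and a factor $\Ga(t-L-\epsi+1)/\Ga(t+L+\epsi)$ whose poles all satisfy $\Re t\le L+\epsi-1$ --- which is exactly your cancellation of the ``bad'' denominator roots against numerator zeros. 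Parts (ii) and (iii) of your proposal are correct.

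There is, however, an error in your part (i), and it contradicts your own part (iii). You assert that for $1\le m\le k-1$ both sides of $E(m,k)=R(A_m,k)$ vanish, the right side ``because $\prod_{j=1}^{k-1}(t-L-j)$ contains a zero factor at $t=A_m$.'' But whenever $k\ge 2L-1$ and $1\le m\le k+2-2L$, the point $A_m$ is simultaneously a simple zero of the numerator and a simple zero of the denominator --- these are precisely the bad poles you cancel in (iii) --- so after cancellation $R_k(A_m)\ne 0$. Concretely, for the series $\CC$ with $L=2$, $k=3$ one finds $R_3(t)=48(t-4)/\bigl((t-2)(t-1)(t+1)(t+2)\bigr)$, hence $R_3(A_1)=R_3(3)=-\tfrac65$, whereas the genuine transition coefficient $(e_1:g_3)$ vanishes by the triangularity established in the proof of Proposition \ref{prop5.A}. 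So the interpolation identity can only be claimed for the range in which the formulas of Theorem \ref{thm7.A} are actually stated (namely $m\ge k$, together with $k+2-2L<m<k$, where the formula value is honestly $0$); your attempted verification for all $m\ge 1$ fails at exactly the points your part (iii) identifies, and the paper's proof does not attempt such a verification --- it only substitutes $t-L-\epsi+1$ for $m$ in the stated formulas. You should either restrict the interpolation claim accordingly or flag that, with $E(m,k)$ read as the transition coefficient $(e_m:g_k)$, the identity fails for small $m$ once $k\ge 2L-1$.
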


We recall that 
\begin{equation}\label{eq9.B}
A_m=L+\epsi+m-1, \quad \text{where $\epsi=1,\tfrac12,0$}
\end{equation}

\begin{proof}
(i) To define $R(t,k)$, we use \eqref{eq9.B} as a prompt and simply replace $m$ by $t-L-\epsi+1$ in the formulas of Theorem \ref{thm7.A}. For $k=0$, we evidently get a rational function in $t$. For $k\ge1$, we write down the result by using the gamma function instead of the factorials (to distinguish between the series $\CBD$ we add the corresponding superscript, as before): 
$$
R^{(\CC)}(t,k)=2(k+1)(2L-2)(2L-1)\,\frac{t\,\Ga(t-L)\Ga(t+L-k-2)}
{\Ga(t-L-k+1)\Ga(t+L+1)}.
$$
$$
R^{(\BB)}(t,k)=2(k+\tfrac12)(2L-2)(2L-1)\,\frac{\Ga(t-L+\tfrac12)\Ga(t+L-k-\tfrac32)}
{\Ga(t-L-k+\tfrac32)\Ga(t+L+\tfrac12)}.
$$
$$
R^{(\DD)}(t,k)=2(2L-2)\,\frac{\Ga(t-L+1)\Ga(t+L-k-1)}
{\Ga(t-L-k+2)\Ga(t+L)}.
$$
From these expressions, the rationality property becomes also evident: we use the fact that  if $\al$ and $\be$ are two constants with $\al-\be\in\Z$, then the ratio  $\Ga(t+\alpha)/\Ga(t+\be)$ is a rational function in $t$.

(ii) The uniqueness claim is evident.

(iii) In each of the three variants, there are different ways to split the expression containing the 4 gamma functions into the product of two fractions of the form
$$
\frac{\Ga(t+\al_1)}{\Ga(t+\be_1)}\cdot \frac{\Ga(t+\al_2)}{\Ga(t+\be_2)},
$$
where $\al_1-\be_1$ and $\al_2-\be_2$ are integers. For our purpose, it is convenient to form the first fraction from the second $\Ga$-factor in the numerator and the firsr $\Ga$-factor in the denominator. Then the first fraction is a polynomial. As for the second fraction, it has the form
$$
\frac{\Ga(t-L-\epsi+1)}{\Ga(t+L+\epsi)}=\prod_{j=1}^{2L+2\epsi}\frac1{t-(L+\epsi-j)}
$$
and hence is regular in $\mathcal H(\epsi,L)$. 
\end{proof}

\subsection{A biorthogonal system of rational functions}\label{sect9.2}
Let $\FF^0(\epsi,L)\subset \FF(\epsi,L)$ denote the codimension $1$ subspace of functions vanishing at infinity. As in the previous subsection, we fix $L\ge2$ and assume that $\epsi$ takes one of the three values $1$, $\tfrac12$, $0$. Because $g_k(\infty)=1$, the functions $g^0_k(t):=g_k(t)-1$, where $k=1,2,\dots$, form a basis of $\FF^0(\epsi,L)$. The functions $R(t,k)$ and the half-plane $\mathcal H(\epsi,L)$ were introduced in Proposition \ref{prop9.A}.

\begin{proposition}
The two systems of rational functions,  $\{g_k^0(t):k=1,2,\dots\}$ and $\{R_k(t):=R(t,k): k=1,2,\dots\}$, are biorthogonal  
in the sense that
$$
\frac1{2\pi i}\oint_C g_\ell(t)R_k(t)dt=\de_{k\ell}, \qquad k,\, \ell=1,2,\dots,
$$
where as $C$ one can take an arbitrary simple contour in $\mathcal H(\epsi,L)$ with the property that it goes in the positive direction and encircles all the poles of $g_\ell(t)$. 
\end{proposition}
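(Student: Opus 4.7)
\medskip

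\noindent\textbf{Proof plan.} The strategy is to evaluate the contour integral by residues and match the outcome with the inverse-matrix relation between the transition coefficients $(e_m:g_k)$ and $(g_\ell:e_m)$. Concretely, fix $k,\ell\ge1$. Since the contour $C$ lies in $\mathcal H(\epsi,L)$ and $R_k(t)$ is regular throughout $\mathcal H(\epsi,L)$ by Proposition \ref{prop9.A}(iii), the only singularities of the integrand inside $C$ are the poles of $g_\ell(t)$ located at the positive grid points $A_1,\dots,A_\ell$. The residue theorem therefore gives
$$
\frac1{2\pi i}\oint_C g_\ell(t)R_k(t)\,dt
=\sum_{m=1}^{\ell}\Res_{t=A_m}\bigl(g_\ell(t)\bigr)\cdot R_k(A_m).
$$

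Next, I would identify each of these two factors. On one hand, by the definition of $R(t,k)$ we have $R_k(A_m)=E(m,k)=(e_m:g_k)$ for every $m\ge1$. On the other hand, expanding $g_\ell$ in the basis $\{e_j\}$ as $g_\ell=\sum_{j\ge0}(g_\ell:e_j)\,e_j$ and recalling that $e_0\equiv1$ while $\Res_{t=A_m}(e_j(t))=\delta_{jm}$ for $m,j\ge1$, we get $\Res_{t=A_m}(g_\ell(t))=(g_\ell:e_m)$. Combining the two identifications, the contour integral becomes
$$
\frac1{2\pi i}\oint_C g_\ell(t)R_k(t)\,dt
=\sum_{m=1}^{\ell}(g_\ell:e_m)(e_m:g_k).
$$

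The final step is to recognize this sum as an entry of the product of two mutually inverse change-of-basis matrices. Indeed, writing $g_\ell=\sum_{m}(g_\ell:e_m)e_m$ and $e_m=\sum_{k}(e_m:g_k)g_k$ and substituting one into the other yields $\sum_{m\ge0}(g_\ell:e_m)(e_m:g_k)=\delta_{\ell k}$. The contribution of the index $m=0$ vanishes for $k\ge1$, because $e_0=g_0=1$ forces $(e_0:g_k)=\delta_{0k}$. Moreover, by the triangularity of both transition matrices (Lemma \ref{lemma5.C}), the sum is in fact supported on $k\le m\le\ell$, so the upper cut-off at $m=\ell$ dictated by the residue theorem is automatic. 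This yields $\sum_{m=1}^\ell(g_\ell:e_m)(e_m:g_k)=\delta_{k\ell}$, completing the proof.

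The only conceptual hurdle is keeping track of the index conventions so that the two triangular transition matrices really are inverse to one another on the subspace of interest; once this is in place the argument is essentially just the residue theorem plus a bookkeeping identity. The regularity of $R_k$ in $\mathcal H(\epsi,L)$ asserted by Proposition \ref{prop9.A}(iii) is what legitimizes discarding all residues outside the positive grid, and this is the one place where the special structure of the three $\CBD$-cases is genuinely used.
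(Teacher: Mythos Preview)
Your argument is correct and follows essentially the same route as the paper: compute the integral by residues at the grid points $A_m$ (using the regularity of $R_k$ in $\mathcal H(\epsi,L)$), identify $R_k(A_m)=E(m,k)=(e_m:g_k)$ and $\Res_{t=A_m}g_\ell=(g_\ell:e_m)$, and then recognize the resulting sum as the inverse-matrix identity $\sum_m(g_\ell:e_m)(e_m:g_k)=\delta_{k\ell}$. The paper packages the last two steps by invoking Proposition~\ref{prop5.A} directly with $f=g_\ell$, whereas you unfold that proposition and make the change-of-basis relation explicit; the content is the same.
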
 

\begin{proof}
The left-hand side is equal to the sum of the residues of the integrand. Because $R_k(t)$ is regular in $\mathcal H(\epsi,L)$, we may replace $g^0_\ell(t)$ by $g_\ell(t)$ and write this sum as 
\begin{equation}\label{eq9.A}
\sum_{m=1}^\infty (\Res_{t=A_m}(g_\ell(t))R_k(A_m)=\sum_{m=1}^\infty (\Res_{t=A_m}(g_\ell(t))E(m,k),
\end{equation}
where the equality holds because $R_k(A_m)=E(m,k)$ (see Proposition \ref{prop9.A}, item (i)). 

On the other hand, by the definition of the coefficients $E(m,k)=(e_m:g_k)$, for any function $f\in\FF(\epsi,L)$ one has 
\begin{equation*}
(f:g_k)=\sum_{m=1}^\infty (\Res_{t=A_m}f(t)) E(m,k), \qquad k=1,2,3, \dots\,.
\end{equation*}

Applying this to $f=g_\ell$ we conclude that \eqref{eq9.A} is equal to $(g_\ell:g_k)=\de_{k\ell}$, as desired.
\end{proof}

\subsection{Degeneration $g_k(t) \to \wt P^{(a,b)}_k(x)$}\label{sect9.4}
Let 
$$
\wt P^{(a,b)}_k(x)=\frac{P^{(a,b)}_k(x)}{P^{(a,b)}_k(1)}, \quad k=0,1,2,\dots,
$$
be the Jacobi polynomials with parameters $(a,b)$, normalized at the point $x=1$. Next, consider the rational functions $g_k(t;a,\epsi,L)$ given by the terminating hypergeometric series \eqref{eq1.g_k}. Recall that $\epsi=(a+b+1)/2$. We rescale $t=sL$, where $s$ is a new variable, which is related to $x$ via 
$$
x=\frac{s^2+1}{s^2-1}=\frac12\left(\frac{s+1}{s-1}+\frac{s-1}{s+1}\right).
$$
Under these assumptions, the following limit relation holds
$$
\lim_{L\to\infty}g_k(sL; a,\epsi,L)=\wt P^{(a,b)}_k(x), \quad k=0,1,2,\dots\,.
$$
It is easily verified from \eqref{eq1.g_k} and the expression of the Jacobi polynomials through the Gauss hypergeometric function. 

\subsection{A three-term recurrence relations for the functions $g_k(t)$}\label{sect9.3}

Wilson's thesis \cite{Wilson} contains a list of three-term recurrence relations satisfied by every terminated balanced hypergeometric series 
$$
F={}_4F_3\left[\begin{matrix}a,\, b,\, c,\, d
\\e,\,  f,\, g\end{matrix}\Biggl|1\right].
$$
One of them (formula (4.9) in \cite[p. 47]{Wilson}) reads as
\begin{multline*}
\frac{a(e-b)(f-b)(g-b)}{a-b+1}(F(a^+,b^-)-F)-\frac{b(e-a)(f-a)(g-a)}{b-a+1}(F(a^-,b^+)-F)\\
+cd(a-b)F=0.
\end{multline*}
This formula is applicable to the ${}_4F_3$ series defining the rational functions $g_k(t)$ (see \eqref{eq1.g_k}). It follows that the functions $g_k(t)$ satisfy a three-term recurrence relation, which is of the type investigated by Zhedanov \cite{Zhedanov-1999}.

\section*{List of symbols}

$\epsi$,  1.7. $\La^N_K$, 1.6. $\phi_{\nu,N}$, 1.5.  $(\phi:g_k)$, 1.8. $\si_{\mu,N}$, 2.1. $\AAA(\epsi,L)$, 1.8. $A_m$, 1.8. $C(N,\mu;a)$, 3. $d_N(\nu;\epsi)$, 1.7. $e_m$, 1.9. $E(m,k)$, 7. $f_\ell$, 5. $F_N$, 1.7.  $\FF(\epsi,L)$, 1.8. $\FF^0(\epsi,L)$, 9.2. $g_k(t)$, 1.7.  $G_{\ka,K}$, 1.7. $L$, 1.7. $P^{(a,b)}_{\nu,N}$, 1.6. $\wt P^{(a,b)}_{\nu,N}$, 1.6. $\Res_{t=A_m}$, 1.9. $R_k(t)$, 9.1. $R(k,t)$, 9.1. $S_{\mu,N}$, 2.1. $V_N$, 1.5.  $(x\mid c_0,c_1,\dots)^m$, 2.1.

\bigskip

Institute for Information Transmission Problems, Moscow, Russia.

Skolkovo Institute of Science and Technology, Moscow, Russia.

HSE University, Moscow, Russia.

\medskip
\emph{e-mail}: olsh2007@gmail.com

\end{document}